\documentclass[a4paper,12pt]{amsart}
\usepackage[T1]{fontenc}
\usepackage[utf8]{inputenc}
\usepackage[margin = 2.5cm]{geometry}
\usepackage{amsmath}
\usepackage{amssymb}
\usepackage{amsthm}
\usepackage{fancyhdr}
\usepackage{geometry}
\usepackage{lmodern}
\usepackage{enumerate}
\usepackage{yfonts}
\usepackage{latexsym}
\usepackage{mathtools}
\usepackage{yhmath}
\usepackage{tikz}
\usepackage{array}
\usepackage{color}
\usepackage{todonotes}
\usepackage{mathrsfs}
\usepackage[all]{xy}
\usepackage{ytableau}
\usepackage{multicol}	
\usepackage{caption}  
\usepackage[sc]{mathpazo}
\usepackage[normalem]{ulem}
\newcolumntype{L}[1]{>{\raggedright\let\newline\\\arraybackslash\hspace{0pt}}m{#1}}
\newcolumntype{C}[1]{>{\centering\let\newline\\\arraybackslash\hspace{0pt}}m{#1}}
\newcolumntype{R}[1]{>{\raggedleft\let\newline\\\arraybackslash\hspace{0pt}}m{#1}}
\usepackage[pdfencoding=unicode,
            psdextra,
            colorlinks=true,
            linkcolor=blue,
            anchorcolor=blue,
            citecolor=blue,
            filecolor=blue,
            urlcolor=blue]{hyperref}

\theoremstyle{definition}
\newtheorem{defi}{Definition}[section]
\newtheorem{ex}[defi]{Example}
\newtheorem{rmk}[defi]{Remark}

\theoremstyle{plain}
\newtheorem{thm}[defi]{Theorem}
\newtheorem{lem}[defi]{Lemma}
\newtheorem{cor}[defi]{Corollary}
\newtheorem{prop}[defi]{Proposition}

\newtheorem*{prop*}{Proposition}

\DeclareMathOperator{\soc}{soc}

\DeclareMathOperator{\GL}{GL}

\DeclareMathOperator{\e}{e}

\newcommand{\N}{\mathbb{N}}
\newcommand{\Q}{\mathbb{Q}}

\newcommand{\Z}{\mathbb{Z}}

\newcommand{\m}{\mathfrak{m}}
\newcommand{\ml}{\mathfrak{l}}

\renewcommand{\b}{\mathrm{b}}

\newcommand{\lr}[1]{\left(#1 \right)}

\usepackage{caption}
\usepackage[
backend=bibtex,
style=alphabetic,
doi=false,isbn=false,url=false,eprint=false
]{biblatex}

\title[Mixed RSK for $\GL_n(\Q_p)$]{A mixed Robinson--Schensted--Knuth Construction in the Representation Theory of $\GL_n$ over local non-Archimedean Fields}
         \date{\today}
	\author{Ilse Fischer \and Markus Reibnegger}
	\thanks{The authors acknowledge support from the Austrian Science Fund (FWF) grant 10.55776/F1002.}

\begin{document}
\begin{abstract}
		We present a simple combinatorial method for computing the socle of representations of $\GL_n(F)$, where $F$ is a local non-Archimedean field, parabolically induced from two ladder representations. We adapt recent approaches to such problems using the classical Robinson--Schensted--Knuth correspondence (RSK) and introduce a new variant of RSK that we call mixed-RSK. Some combinatorial properties of this map are investigated and then used to resolve a conjecture of Erez Lapid. 
\end{abstract}
	
	\maketitle
	
	\tableofcontents
	\section{Introduction}
	
	\subsection{Motivation}
	
	The representation theory of classical groups over non-Archimedean local fields has played a central part in many developments of the previous decades in number theory. Of particular importance is the representation theory of the groups $\GL_n(F)$ of invertible $n\times n$-matrices over a local non-Archimedean field $F$. By work of Bernstein and Zelevinsky \cite{BZ76,BZ77,Zelevinsky80}, the (complex and smooth) irreducible representations of $\GL_n(F)$ can be parametrized by so-called \textit{multisegments}, which are essentially combinatorial objects. For many problems concerning representations of $\GL_n(F)$ (precisely speaking: those concerning only individual Bernstein blocks) it is in fact enough to consider multisegments as purely combinatorial objects: a multisegment $\m$ can then be seen as a finite multiset of integer intervals $[a,b]=\{x\in\Z\colon a\leq x\leq b\}$. For each such multisegment there is a unique irreducible representation, denoted $Z(\m)$, of a general linear group and every irreducible representation of $\GL_n(F)$ is of this form. Having such simple combinatorial objects to classify irreducible representations, it is natural to desire (relatively) simple combinatorial algorithms for computations involving such representations, see \cite{KL12,LM14,LM16} for some developments on this. In analogy to the classical Littlewood--Richardson rule for the symmetric group $S_n$, a natural and interesting, but difficult, problem is giving a combinatorial description for all irreducible subquotients of $Z(\m)\times Z(\mathfrak{n})$ where $\m,\mathfrak{n}$ are multisegments and $\times$ denotes normalised parabolic induction. A complete understanding of this (or even a necessary condition for $Z(\m)\times Z(\mathfrak{n})$ being irreducible) has remained out of reach, see \cite{LM16,AL25,LM25} and the discussion therein for results in this direction. Nevertheless, for certain classes of multisegments, so-called \textit{ladder} multisegments, there has been considerably greater progress in giving combinatorial methods to answer such decomposition questions.
	
	Recently, Gurevich and Lapid \cite{GL21} have used a version of the classical Robinson--Schensted--Knuth (RSK) correspondence to give a different parametrisation of the irreducible representations by what they call \textit{RSK-standard} modules. They construct these modules by using the RSK correspondence to assign to any multisegment $\m$ a sequence $(\ml_1,\dots,\ml_k)$ of ladder multisegments. More precisely, for every multisegment $\m$, $Z(\m)$ is the unique irreducible subrepresentation of $Z(\ml_k)\times\dots\times Z(\ml_1)$, where $(\ml_1,\dots,\ml_k)$ is the sequence of ladders associated to $\m$ by RSK (a full proof of this, using the representation theory of quiver Hecke algebras, can be found in \cite{Gurevich2023a}). The problem of determining all irreducible subquotients of such an RSK-standard module $Z(\ml_k)\times\dots\times Z(\ml_1)$ was left open. To resolve it, one first needs a complete combinatorial understanding of the decomposition of $Z(\ml)\times Z(\ml^\prime)$ where $\ml,\ml^\prime$ are ladder representations. In this paper, we present a partial result in this direction by giving a simple combinatorial description of the socle of such a representation by adapting the RSK map of Gurevich and Lapid. This extends their description in \cite{GL21} in the case of a pair of ladders and also extends a similar, albeit differently formulated, description in \cite[Theorem 6.11]{Gurevich2020}. Finally, we use our construction to resolve a conjecture of Erez Lapid, communicated to us in a talk, which states that for any multisegment $\m_\sigma$ naturally corresponding to a $321$-avoiding permutation $\sigma\in S_n$ there are precisely $n+1$ pairs of ladders $(\ml,\ml^\prime)$ such that $\soc(Z(\ml)\times Z(\ml^\prime))=Z(\m_\sigma)$, see Theorem \ref{thm:n+1} for a precise statement.
	
	\subsection{Mixed Robinson--Schensted--Knuth Correspondence}
	
	We describe the construction we found for $\soc(Z(\ml)\times Z(\ml^\prime))$ in broad terms, the precise description of this can be found in Section \ref{sec:mat}. The results of \cite{GL21} compute this socle using RSK, but this only works if the pair of ladders $(\ml^\prime,\ml)$ is \textit{dominant} (see Definition \ref{def:dominant}). To compute it also in the case of non-dominant pairs $(\ml,\ml^\prime)$ we decompose the pair by splitting-off a (maximal in a suitable sense) dominant part of it to get two pairs, one of which is dominant and the other enjoying a property dual to dominance. We reformulate the description of the inverse of the RSK map in \cite{GL21} to a matching rule between the segments of $\ml$ and $\ml^\prime$ for the dominant part and use a suitable dual version of it in the remaining part to get a set of matched segments $(\Delta,\Delta^\prime)$ with $\Delta$ in $\ml$ and $\Delta^\prime$ in $\ml^\prime$. Then $\soc(Z(\ml)\times Z(\ml^\prime))$ is obtained by replacing all such matched pairs of segments by their \textit{union-intersection} $\Delta\cup\Delta^\prime+\Delta\cap\Delta^\prime$ (see Theorem \ref{thm:soc}). We denote by $\Psi(\ml,\ml^\prime)$ the resulting multisegment. Union-intersection is a fundamental tool for decomposition problems in the representation theory of $\GL_n(F)$, see \cite[Theorem 7.1]{Zelevinsky80} for a classical and very useful example of this. We view it as a nice feature of our description that it computes $\soc(Z(\ml)\times Z(\ml^\prime))$ by isolating the pairs of segments which have to be replaced by their union-intersection since this is the simplest combinatorial description that can be hoped for. This description also works for non-dominant pairs $(\ml,\ml^\prime)$, which was needed to resolve the $(n+1)$-to-$1$ conjecture of Erez Lapid (here Theorem \ref{thm:n+1}).
	
	We briefly describe this $(n+1)$-to-$1$ result and the combinatorial construction we use to prove it. To a pair $(I,J)$ of subsets of $[n]=\{1,\dots,n\}$ of equal size, we associate a ladder multisegment by setting
	\[ \ml_{I,J}=\sum_{i=1}^k[a_i,b_i+n-1], \]
	where the $a_i$ resp. $b_i$ are the elements of $I$ resp. $J$ in decreasing order. Similarly, to any permutation $\sigma\in S_n$ we associate a multisegment by setting
	\[ \m_\sigma=\sum_{i=1}^n[i,\sigma(i)+n-1]. \]
	By \cite[Corollary 4.13.]{Gurevich2015}, for any pair $(I,J)$ of sets as above, there is a unique $321$-avoiding permutation $\sigma\in S_n$ such that
	\[ \soc(Z(\ml_{I,J})\times Z(\ml_{[n]\setminus I,[n]\setminus J}))=Z(\m_\sigma). \]
	This defines a map $\psi\colon (I,J)\mapsto \sigma$ from pairs of subsets of $[n]$ of equal size to $321$-avoiding permutations. Note that the number of pairs $(I,J)$ of subsets of $[n]$ of equal size is $\binom{2n}{n}$ and the number of $321$-avoiding permutations is the $n$-th Catalan number $C_n=\frac{1}{n+1}\binom{2n}{n}$ as was first proved by Knuth \cite{Knuth69}. The conjecture of Erez Lapid was that $\psi$ is $(n+1)$-to-$1$. By our results described above, this map can be computed using the map $\Psi$ that computes such socles. To show that $\psi$ is $(n+1)$-to-$1$, we explicitly describe a combinatorial method to compute $\psi^{-1}(\sigma)$ for any $321$-avoiding $\sigma$.
	
	Concretely, we construct pairs $(I,J)\in \psi^{-1}(\sigma)$ by applying a \textit{mixed-RSK} map to $\sigma$. This is a construction which may be of independent combinatorial interest and that we sketch here. Thinking of permutations $\sigma$ as a collection of points $(i,\sigma(i))$ in the plane, Viennot's geometric construction of the classical Robinson--Schensted correspondence associates a pair of standard Young tableaux of equal shape to $\sigma$ by connecting the points of $\sigma$ by so-called shadow lines, see \cite{Fulton1996} and also Figure \ref{fig:rsknw} for an illustration. Furthermore, it is a classical result that under the RSK correspondence, $321$-avoiding permutations correspond precisely to pairs of standard Young tableaux with at most $2$ rows. Such pairs are in turn clearly in bijection with pairs $(I,J)$ of subsets of $[n]$ of equal size.
	
	\begin{figure}[ht]
			\centering
			\begin{tikzpicture}[x=1.4em,y=1.4em, thick,color = black]
			\draw[step=1,black,thin] (0,0) grid (9,9);
			\draw [black] (1.5,8.5) circle (4pt);
			\draw [black] (3.5,7.5) circle (4pt);
    			\draw [black] (4.5,5.5) circle (4pt);
    			\draw [black] (6.5,4.5) circle (4pt);
    			\draw [black] (8.5,3.5) circle (4pt);
    			\draw [black] (0.5,6.5) circle (4pt);
    			\draw [black] (2.5,2.5) circle (4pt);
    			\draw [black] (5.5,1.5) circle (4pt);
    			\draw [black] (7.5,0.5) circle (4pt);
    			\draw [blue] (0.5,-0.5) -- (0.5,6.5) -- (1.5,6.5) -- (1.5,8.5) -- (9.5,8.5);
    			\draw [blue] (2.5,-0.5) -- (2.5,2.5) -- (3.5,2.5) -- (3.5,7.5) -- (9.5,7.5);
    			\draw [blue] (4.5,-0.5) -- (4.5,5.5) -- (9.5,5.5);
    			\draw [blue] (5.5,-0.5) -- (5.5,1.5) -- (6.5,1.5) -- (6.5,4.5) -- (9.5,4.5);
    			\draw [blue] (7.5,-0.5) -- (7.5,0.5) -- (8.5,0.5) -- (8.5,3.5) -- (9.5,3.5);
    			\node at (0.5,9.5) {$1$};
    			\node at (1.5,9.5) {$2$};
    			\node at (2.5,9.5) {$3$};
    			\node at (3.5,9.5) {$4$};
    			\node at (4.5,9.5) {$5$};
    			\node at (5.5,9.5) {$6$};
    			\node at (6.5,9.5) {$7$};
    			\node at (7.5,9.5) {$8$};
    			\node at (8.5,9.5) {$9$};
    			\node at (-0.5,8.5) {$1$};
    			\node at (-0.5,7.5) {$2$};
    			\node at (-0.5,6.5) {$3$};
    			\node at (-0.5,5.5) {$4$};
    			\node at (-0.5,4.5) {$5$};
    			\node at (-0.5,3.5) {$6$};
    			\node at (-0.5,2.5) {$7$};
    			\node at (-0.5,1.5) {$8$};
    			\node at (-0.5,0.5) {$9$};
		\end{tikzpicture}
		\caption{Shadow lines for $\sigma=241579368$ with a north-west orientation.}
		\label{fig:rsknw}
	\end{figure}  
	
	This construction naturally leaves a choice of orientation for the shadow lines. Indeed the version of RSK used in \cite{GL21} is based on an orientation where the 'light' creating the shadow lines is not in the north-west corner as in Figure \ref{fig:rsknw} but rather in the south-east corner as in Figure \ref{fig:rskse}.
	
	\begin{figure}[ht]
			\centering
			\begin{tikzpicture}[x=1.4em,y=1.4em, thick,color = black]
			\draw[step=1,black,thin] (0,0) grid (9,9);
			\draw [black] (1.5,8.5) circle (4pt);
			\draw [black] (3.5,7.5) circle (4pt);
    			\draw [black] (4.5,5.5) circle (4pt);
    			\draw [black] (6.5,4.5) circle (4pt);
    			\draw [black] (8.5,3.5) circle (4pt);
    			\draw [black] (0.5,6.5) circle (4pt);
    			\draw [black] (2.5,2.5) circle (4pt);
    			\draw [black] (5.5,1.5) circle (4pt);
    			\draw [black] (7.5,0.5) circle (4pt);
    			\draw [blue] (-0.5,0.5) -- (7.5,0.5) -- (7.5,3.5) --(8.5,3.5) -- (8.5,9.5);
    			\draw [blue] (-0.5,1.5) -- (5.5,1.5) -- (5.5,4.5) -- (6.5,4.5) -- (6.5,9.5);
    			\draw [blue] (-0.5,2.5) -- (2.5,2.5) -- (2.5,5.5) -- (4.5,5.5) -- (4.5,9.5);
    			\draw [blue] (-0.5,6.5) -- (0.5,6.5) -- (0.5,7.5) -- (3.5,7.5) -- (3.5,9.5);
    			\draw [blue] (-0.5,8.5) -- (1.5,8.5) -- (1.5,9.5);
    			\node at (0.5,-.5) {$1$};
    			\node at (1.5,-.5) {$2$};
    			\node at (2.5,-.5) {$3$};
    			\node at (3.5,-.5) {$4$};
    			\node at (4.5,-.5) {$5$};
    			\node at (5.5,-.5) {$6$};
    			\node at (6.5,-.5) {$7$};
    			\node at (7.5,-.5) {$8$};
    			\node at (8.5,-.5) {$9$};
    			\node at (9.5,8.5) {$1$};
    			\node at (9.5,7.5) {$2$};
    			\node at (9.5,6.5) {$3$};
    			\node at (9.5,5.5) {$4$};
    			\node at (9.5,4.5) {$5$};
    			\node at (9.5,3.5) {$6$};
    			\node at (9.5,2.5) {$7$};
    			\node at (9.5,1.5) {$8$};
    			\node at (9.5,0.5) {$9$};
		\end{tikzpicture}
		\caption{Shadow lines for $\sigma=241579368$ with a south-east orientation.}
		\label{fig:rskse}
	\end{figure} 
		
	The results of \cite{GL21} and a simple duality argument show that both of these versions of the Robinson--Schensted correspondence can be used to obtain elements of $\psi^{-1}(\sigma)$, by identifying the resulting $2$-line tableaux either with their first rows in the case of Figure \ref{fig:rsknw} or with their second rows in the case of Figure \ref{fig:rskse} above. We prove in Section \ref{sec:appl} that all preimages of $\sigma$ under the map $\psi$ can be found by \textit{mixing} these two orientations. For a point $\Gamma$ in the plane, we define the mixed-RSK map $\mathcal{R}_\Gamma$ by applying the shadow construction of RSK as in Figure \ref{fig:rsknw} to all points of a $321$-avoiding permutation $\sigma$ that are south-east of $\Gamma$ and by applying the other version of RSK to all remaining points. We illustrate this in Figure \ref{fig:mixrskex}, which shows that $\mathcal{R}_{(4,5)}(241579368)=(\{1,3,5,6,8\},\{1,2,4,5,6\})$.
	
	\begin{figure}[h!]
			\centering
			\begin{tikzpicture}[x=1.4em,y=1.4em, thick,color = black]
			\draw[step=1,black,thin] (0,0) grid (9,9);
			\draw [black] (1.5,8.5) circle (4pt);
			\draw [black] (3.5,7.5) circle (4pt);
    			\draw [black] (4.5,5.5) circle (4pt);
    			\draw [black] (6.5,4.5) circle (4pt);
    			\draw [black] (8.5,3.5) circle (4pt);
    			\draw [black] (0.5,6.5) circle (4pt);
    			\draw [black] (2.5,2.5) circle (4pt);
    			\draw [black] (5.5,1.5) circle (4pt);
    			\draw [black] (7.5,0.5) circle (4pt);
    			\draw [orange] (4,-0.5) -- (4,6) -- (9.5,6);
    			\draw [black] (-0.5,2.5) -- (2.5,2.5) -- (2.5,7.5) -- (3.5,7.5) -- (3.5,9.5);
    			\draw [black] (-0.5,6.5) -- (0.5,6.5) -- (0.5,8.5) -- (1.5,8.5) -- (1.5,9.5);
    			\draw [black] (4.5,-0.5) -- (4.5,5.5) -- (9.5,5.5);
    			\draw [black] (5.5,-0.5) -- (5.5,1.5) -- (6.5,1.5) -- (6.5,4.5) -- (9.5,4.5);
    			\draw [black] (7.5,-0.5) -- (7.5,0.5) -- (8.5,0.5) -- (8.5,3.5) -- (9.5,3.5);
    			\node [blue] at (-0.5,8.5) {$1$};
    			\node [blue] at (0.5,9.5) {$1$};
    			\node [blue] at (-0.5,7.5) {$2$};
    			\node [blue] at (2.5,9.5) {$3$};
    			
    			\node [blue] at (4.5,-1) {$5$};
    			\node [blue] at (10,5.5) {$4$};
    			\node [blue] at (5.5,-1) {$6$};
    			\node [blue] at (10,4.5) {$5$};
    			\node [blue] at (7.5,-1) {$8$};
    			\node [blue] at (10,3.5) {$6$};
		\end{tikzpicture}
		\caption{Illustrating the computation of $\mathcal{R}_{(4,5)}(241579368)$.}
		\label{fig:mixrskex}
	\end{figure}
	
	We prove in Section \ref{sec:appl} that these mixed RSK maps are inverse to the map $\Psi$ computing socles and that for every $321$-avoiding $\sigma$ there are precisely $n+1$ different pairs $\mathcal{R}_\Gamma(\sigma)$ as $\Gamma$ ranges over all elements of $\Z^2$.
	
	So far we have not investigated these ideas further, but the combinatorics of the mixed-RSK construction might be interesting in its own right. A natural question is whether such a construction can be extended in a useful way to all permutations without the pattern avoidance condition.
	
	\subsection{Outline of the Paper}
	In Section \ref{sec:prelim} we introduce the necessary notation and some results we use from the representation theory of $\GL_n(F)$ and further establish some results on pairs of ladder multisegments. We define the map $\Psi$ in Section \ref{sec:mat} and prove in Section \ref{sec:mainthm} that, for pairs of ladders $(\ml,\ml^\prime)$, it computes $\soc(Z(\ml)\times Z(\ml^\prime))$. In Section \ref{sec:appl} we connect this to RSK combinatorics, introduce the mixed-RSK maps and prove that the map $\psi$ mentioned above is $(n+1)$-to-$1$.
	
	\subsection{Acknowledgements}
		We want to thank Erez Lapid for telling us about his conjecture (Theorem \ref{thm:n+1}) and for interesting discussions as well as reading a draft of this article. We also thank Alberto Mínguez for his feedback on earlier drafts of this article.
		
	\section{Preliminaries}\label{sec:prelim}
	
		\subsection{Multisegments}
		
		We begin by introducing the combinatorial objects we will be concerned with and explain their basic properties.
		
		\begin{defi}
			A \textit{segment} is an integer interval $\Delta=[c,d]\subseteq\Z$ with $c\leq d$. For such a segment we write $\b(\Delta)=c$, $\e(\Delta)=d$. It is convenient to also allow segments of the form $[a,a-1]$. They are all identified, called the empty segment and denoted by $0$.
			
			A \textit{multisegment} $\m$ is a finite multiset of segments. We always write such multisets additively, so that a multisegment $\m$ can be written as a formal sum
			\begin{align*}
				\m=\sum_{i\in I}\Delta_i,
			\end{align*}
			where $I$ is a finite set and $\Delta_i$ are segments.
		\end{defi}
		
		We often identify non-zero segments $\Delta$ with their pair of endpoints $(\b(\Delta),\e(\Delta)) \in \Z^2$ and hence view multisegments as finite multisets of pairs of integers. Throughout the article, we use matrix coordinates (row, column) on $\Z^2$, with coordinates increasing top-to-bottom and left-to-right. Since all non-empty segments satisfy $\b(\Delta)\leq\e(\Delta)$, the identification only takes place in the region $T\coloneqq\{(x,y)\in\Z^2\colon x\leq y\}$. To account for the empty segment, we also consider the extended region ${}^-T\coloneqq T\cup\{(a,a-1)\colon a\in\Z\}$ and identify all points $(a,a-1)$ with the empty segment $0$. We will further consider $\Z^2$ with matrix coordinates, i.e. the first coordinate is drawn vertically, increasing top-to-bottom and the second coordinate is drawn horizontally, increasing left-to-right. Furthermore, we will frequently use cardinal directions (north, east, south, west) and their combinations to indicate the relative position of points, with capitalisation indicating a strict relation. As an example, for points $(a,b),(c,d)\in \Z^2$ saying that $(a,b)$ is northWest (abbreviated as nW) of $(c,d)$ is equivalent to saying $a\leq c$ and $b<d$. See also Example \ref{ex:mw}.
		
		We order segments $\Delta,\Delta^\prime$ by setting $\Delta\leq\Delta^\prime$ if $\b(\Delta)\leq \b(\Delta^\prime)$, $\e(\Delta)\leq \e(\Delta^\prime)$. Note that this is just the product order on $\Z^2$ under the above identifications, or equivalently $\Delta\leq\Delta^\prime$ if and only if $\Delta$ is northwest of $\Delta^\prime$.

		\begin{defi}
			Two segments $\Delta,\Delta^\prime$ are \textit{linked} if $\Delta\cup\Delta^\prime$ is also a segment but neither $\Delta\subseteq\Delta^\prime$ nor $\Delta^\prime\subseteq\Delta$.
			
			If $\Delta,\Delta^\prime$ are linked, then also $\Delta\cup\Delta^\prime$ and $\Delta\cap\Delta^\prime$ are segments. Replacing a pair of linked segments $\Delta+\Delta^\prime$ by $\Delta\cup\Delta^\prime+\Delta\cap\Delta^\prime$ in any multisegment $\m$ is called \textit{union-intersection}. Note that $\Delta\cap\Delta^\prime=0$ can occur for linked segments if $\e(\Delta)+1=\b(\Delta^\prime)$ (or vice-versa) in which case we say that the segments are \textit{juxtaposed}.
		\end{defi}
		
		\begin{rmk}
			Identifying segments with points in $\Z^2$, the points $\Delta=(c,d),\Delta^\prime=(c^\prime,d^\prime)$ are linked if and only if one is SouthEast of the other and $(c,d^\prime),(c^\prime,d)\in{}^-T$. The operation of switching coordinates, i.e. $(c,d)+(c^\prime,d^\prime)\mapsto (c,d^\prime)+(c^\prime,d)$ is exactly the union-intersection described above. Now it is clear why we also consider segments of the form $(c,c-1)$: these occur naturally when doing union-intersection on juxtaposed segments.
		\end{rmk}
		
		The operation of union-intersection is fundamental to many decomposition problems in the representation theory of $\GL_n(F)$ (see \cite{Zelevinsky80} for many instances of this) and will also play a central role in the following.
		
		Finally, we fix some convenient notation which we make frequent use of. For a segment $\Delta=[c,d]$ we set
		\begin{align*}
			^-\Delta=[c+1,d].
		\end{align*}
		Note that the set ${}^-T$ corresponding to possibly empty segments defined above is the image of the set $T$ corresponding to segments under this map.
		
		Given any multisegment $\m=\sum_{i\in I}\Delta_i$ we set $\min\m\coloneqq \min_{i\in I}\b(\Delta_i)$. Furthermore, if $P$ is any property of segments, we set
		
		\begin{align*}
			\m_P\coloneqq\sum_{\substack{i\in I\\ \Delta_i\text{ satisfies }P}}\Delta_i.
		\end{align*}
	Of primary interest in this article are multisegments of a particularly simple form.
	
	\begin{defi}
		A multisegment $\ml=\Delta_1+\dots+\Delta_k$ is called a \textit{ladder} if the segments can be indexed in such a way that $\b(\Delta_1)>\dots>\b(\Delta_k)$ and $\e(\Delta_1)>\dots>\e(\Delta_k)$, that is $\Delta_j$ is NorthWest of $\Delta_i$ if $i<j$. If the segments are indexed in this way, we refer to $\Delta_1,\dots,\Delta_k$ as the aligned form of $\ml$. In the following, all ladders will be indexed in this way.
	\end{defi}
	
		\subsection{Representation Theory}
		
			After introducing the combinatorial objects of interest, we give a brief account of the way in which they appear in the representation theory of $\mathrm{GL}_n(F)$. We will not introduce the more technical operations needed for this but rather refer the interested reader to \cite{Renard2026,BZ77,Zelevinsky80} for details. Alternatively, these can safely be viewed as a black-box for the purposes of this article.
			
			Let $F$ be any non-Archimedean local field with absolute value $|\cdot|_F$, which for concreteness one can always think of as $\Q_p$, the field of $p$-adic numbers, for some prime $p$ in the following. We recall the classification of smooth complex irreducible representations by multisegments following \cite{BZ77,Zelevinsky80}. Fix an irreducible cuspidal representation $\rho$ of some $\GL_m(F)$. Then a segment $\Delta=[c,d]$ defines an irreducible representation
			\begin{align*}
				Z(\Delta_\rho)\coloneqq\soc(\rho|.|^c\times\dots\times\rho|\cdot|^d),
			\end{align*}
			where $|\cdot|=|\cdot|_F\circ\det$ is a character of $\GL_d(F)$, $\times$ denotes normalised parabolic induction and $\soc(\pi)$ denotes the socle of a representation $\pi$ (i.e. the sum of all irreducible subrepresentations of $\pi$).
			
			If $\m=\sum_{i=1}^k\Delta_i$ is a multisegment, where the indexing satisfies $\Delta_i\nleq\Delta_j$ whenever $i<j$ (e.g. by taking the opposite of the lexicographic order on segments), then
			\begin{align*}
				Z(\m_\rho)\coloneqq\soc(Z((\Delta_1)_\rho)\times\dots\times Z((\Delta_k)_\rho))
			\end{align*}
			is an irreducible representation and all irreducible representations of any $\GL_n(F)$ with cuspidal support in $\Z\cdot\rho\coloneqq\{\rho|\cdot|^a\colon c\in \Z\}$ are of this form. If $\Z\cdot\rho\neq\Z\cdot\rho^\prime$ for $\rho,\rho^\prime$ cuspidal and irreducible then we call $\rho,\rho^\prime$ \textit{inequivalent}. For inequivalent $\rho,\rho^\prime$ the representation $Z(\m_\rho)\times Z(\m^\prime_{\rho^\prime})$ is also irreducible for all multisegments $\m,\m^\prime$.
			\begin{thm}[\cite{BZ77},\cite{Zelevinsky80}]
				If $\pi$ is any irreducible representation of $\GL_n(F)$ for any $n$, then there is a unique set of pairwise inequivalent irreducible cuspidal representations $\rho_1,\dots,\rho_k$ and multisegments $\m_1,\dots,\m_k$ such that
				\begin{align*}
					\pi\cong Z((\m_1)_{\rho_1})\times\dots\times Z((\m_k)_{\rho_k}).
				\end{align*}
			\end{thm}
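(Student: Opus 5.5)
This is the Bernstein–Zelevinsky classification, which the paper cites from \cite{BZ77,Zelevinsky80} rather than proving. The plan below reconstructs the standard argument, taking cuspidal representations and Jacquet modules as the black box.

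\textbf{Step 1: reduction to a single cuspidal line.} Let $\pi$ be an irreducible representation of $\GL_n(F)$. By Jacquet's subrepresentation theorem, $\pi$ embeds into some $\sigma_1\times\dots\times\sigma_r$ with each $\sigma_i$ irreducible cuspidal. The multiset $\{\sigma_i\}$, the cuspidal support, is uniquely determined by $\pi$; this follows from the geometric lemma for the Jacquet module of an induced representation. Partition the support into classes according to the lines $\Z\cdot\rho$, picking one representative $\rho_j$ per class, so that the $\rho_j$ are pairwise inequivalent.

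If $\sigma,\sigma'$ lie on inequivalent lines, then $\sigma\times\sigma'\cong\sigma'\times\sigma$ is irreducible. Using this commutation, we reorder the product to get an embedding
\[
\pi\hookrightarrow \Pi_1\times\dots\times\Pi_k,
\]
where $\Pi_j$ is the product of the cuspidals on the line $\Z\cdot\rho_j$. A Jacquet module argument (Frobenius reciprocity applied along the standard Levi $\GL_{n_1}\times\dots\times\GL_{n_k}$) then gives irreducible $\pi_j$ with support in $\Z\cdot\rho_j$ and
\[
\pi\cong\pi_1\times\dots\times\pi_k.
\]
This product is irreducible because the lines are distinct, which is the irreducibility statement recalled just before the theorem. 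The factors $\pi_j$ are unique because they can be recovered from the Jacquet module of $\pi$ by taking the component whose support lies in the respective line.

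\textbf{Step 2: one line.} It remains to show that every irreducible $\pi$ with support in $\Z\cdot\rho$ equals $Z(\m_\rho)$ for a unique multisegment $\m$. There are two parts.
\begin{enumerate}
\item[(a)] \emph{Well-definedness.} The socle $Z(\m_\rho)$ is irreducible, i.e.\ the product of the $Z((\Delta_i)_\rho)$ taken in the prescribed order has a unique irreducible subrepresentation occurring with multiplicity one. We prove this by showing that a particular irreducible constituent of the Jacquet module (the "highest" exponent, read off from the ordering) occurs with multiplicity one in the full induced module. This uses the geometric lemma together with Zelevinsky's computation of Jacquet modules of $Z(\Delta)$.
\item[(b)] \emph{Exhaustion and injectivity.} We induct on $n$ via derivatives, or equivalently by peeling off the maximal-end cuspidal. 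Choose $\pi$ and let $d$ be maximal with $\rho|\cdot|^d$ in its support. Frobenius reciprocity gives an embedding $\pi\hookrightarrow\pi'\times Z(\Delta)$ for a suitable segment $\Delta$ ending at $d$ and a smaller irreducible $\pi'$. Induction gives $\pi'=Z(\m'_\rho)$, and one then checks that $\pi=Z((\m'+\Delta)_\rho)$. Injectivity follows by reading $\m$ back off from the Jacquet module data, i.e.\ the exponents with maximal multiplicity.
\end{enumerate}

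The main obstacle is part (a) together with the recovery step in (b): the multiplicity-one and combinatorial bookkeeping of Jacquet modules of products of segment representations. That bookkeeping is the heart of \cite{Zelevinsky80}, and here I would simply cite it.
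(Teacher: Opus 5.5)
The paper gives no proof of this statement: the classification is quoted from \cite{BZ77,Zelevinsky80} and used as a black box. Your overall architecture (reduce to one cuspidal line, then classify by socles of standard products) is the standard one. However, the inductive step in~2(b) fails for the conventions in force, $Z([c,d])=\soc(\rho|\cdot|^{c}\times\dots\times\rho|\cdot|^{d})$: a segment ending at the maximal exponent $d$ cannot in general be split off on the \emph{right}. Take $\pi=Z([2,2]+[1,1])=\soc(\rho|\cdot|^{2}\times\rho|\cdot|)$, so $d=2$. The only candidates are:
\begin{itemize}
\item $\Delta=[2,2]$ with $\pi'=\rho|\cdot|$, which forces $\pi\hookrightarrow\rho|\cdot|\times\rho|\cdot|^{2}$, whose socle is $Z([1,2])\not\cong\pi$;
\item $\Delta=[1,2]$ with $\pi'$ trivial, which forces $\pi\cong Z([1,2])$.
\end{itemize}
So the embedding you attribute to Frobenius reciprocity does not exist. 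Even when a right split exists, it does not recover $\m$. For $\pi=Z([3,4]+[1,2])$, the exponent sequences in the minimal Jacquet module are the shuffles of $(3,4)$ and $(1,2)$ other than $(1,2,3,4)$, and none of them ends in $(3,4)$. Hence the only possibility is $\Delta=[4,4]$, $\pi'=Z([3,3]+[1,2])$, and then $\m'+\Delta\neq[3,4]+[1,2]$. Your step ``one then checks that $\pi=Z((\m'+\Delta)_\rho)$'' cannot be carried out.

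The repair is to split on the other side. Look for $\Delta$ with $\e(\Delta)=d$ and $\pi\hookrightarrow Z(\Delta)\times\pi'$. Symmetrically, you can take $\b(\Delta)$ minimal and $\pi\hookrightarrow\pi'\times Z(\Delta)$, which is the shape of the recursion in Theorem~\ref{thm:lmrec}~a). Since no segment of $\m'$ ends after $d$, $\Delta$ can be placed first among the segments of $\m'+\Delta$, up to commuting $Z(\Delta)$ with factors from unlinked segments. So $Z(\Delta)\times Z(\m')$ embeds in a standard product for $\m'+\Delta$, and (a) gives $\pi\cong Z(\m'+\Delta)$. The existence of such an embedding is the real content of the exhaustion step and needs its own argument; Frobenius reciprocity only turns it into a statement about Jacquet modules.

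Two smaller points:
\begin{itemize}
\item Step~1 needs irreducibility of $\pi_1\times\dots\times\pi_k$ for arbitrary irreducibles on pairwise distinct lines. That is more than the two-factor statement about $Z(\m)$'s recalled in the paper, and you invoke it before Step~2 has identified the $\pi_j$.
\item In~(a), if ``the Jacquet module'' means the minimal one, the distinguished exponent sequence does not have multiplicity one once $\m$ has repeated segments (already for $\rho\times\rho$). Repeated segments therefore need separate treatment.
\end{itemize}
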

			In the way we stated it, the Bernstein-Zelevinsky classification has two important consequences. Firstly, any irreducible representation of $\GL_n(F)$ can be described using cuspidal representations and purely combinatorial multisegments. Secondly, questions concerning irreducibility and decomposition problems don't depend on the underlying cuspidal representation and one can always restrict to a single fixed cuspidal $\rho$. Thus one is led to the purely combinatorial way we introduced multisegments in the previous section.
			
			From now on we will fix a cuspidal $\rho$ and drop it from our notation, preferring to write simply $Z(\Delta),Z(\m)$ for the corresponding representations. As outlined above, this is not a serious restriction and for the purposes of this article one can safely think of all irreducible representations of $\GL_n(F)$ as being of the form $Z(\m)$ for a unique multisegment $\m$. 
			
			In general, it is difficult to determine whether for two multisegments $\m,\m^\prime$ the representation $Z(\m)\times Z(\m^\prime)$ is irreducible; even computing the socle $\soc(Z(\m)\times Z(\m^\prime))$ is delicate. However, if $\m$ is any multisegment and $\ml$ is a ladder, then $\soc(Z(\m)\times Z(\ml))$ is irreducible (see \cite{LM16} Proposition 6.15.) and so there is a unique multisegment $\soc(\m,\ml)$ satisfying
			\begin{align*}
				\soc(Z(\m)\times Z(\ml))=Z(\soc(\m,\ml)).
			\end{align*}
			The map $(\m,\ml)\mapsto\soc(\m,\ml)$ satisfies a purely combinatorial recursion. The case $\min \ml\leq\min\m$ is rather simple, while the case $\min\ml>\min\m$ is formulated in terms of the first step of the M\oe glin-Waldspurger involution, which is denoted as $\mathcal{MW}(\m)=(\m^\dag,\Delta^\circ(\m))$ and recalled in Section~\ref{sec:mw}.
			\begin{thm}[see \cite{LM16} Proposition 6.15, \cite{GL21} Proposition 4.2]\label{thm:lmrec}
		Let $\m$ be any multisegment and $\ml$ a ladder. Then, 
			\begin{enumerate}[a)]
				\item if $\min \ml\leq\min\m$ we have 
					\begin{align*}
						\soc(\m,\ml)=\soc(\m-\m_{\leq\Delta},\ml-\Delta)+\m_{\leq\Delta}+\Delta,
					\end{align*}
					where $\Delta$ is the segment of $\ml$ with $b(\Delta)=\min\ml$, and
				\item if $\min\ml>\min\m$ we have
					\begin{align*}
						\mathcal{MW}(\soc(\m,\ml))=(\soc(\m^\dag,\ml),\Delta^\circ(\m)).
					\end{align*}
					In other words, in this case we have:
					\begin{align*}
						\soc(\m,\ml)^\dag&=\soc(\m^\dag,\ml)\quad\text{and}\\
						\Delta^\circ(\soc(\m,\ml))&=\Delta^\circ(\m).
					\end{align*}
			\end{enumerate}
		\end{thm}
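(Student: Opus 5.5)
The plan is to argue on the representation-theoretic side, using that $\soc(Z(\m)\times Z(\ml))$ is irreducible (\cite{LM16} Proposition 6.15). With irreducibility in hand, the multisegment $\soc(\m,\ml)$ is pinned down once we exhibit \emph{some} irreducible subrepresentation of $Z(\m)\times Z(\ml)$, or once we control the invariants that determine it. The two cases are handled by different tools: embeddings of standard-type products for a), and the M\oe glin--Waldspurger first step, realised via derivatives, for b).

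For a), let $\Delta=\Delta_k$ be the NorthWest-most segment of the aligned ladder $\ml=\Delta_1+\dots+\Delta_k$. By the ordering convention in the definition of $Z(\ml)$, we have $Z(\ml)\hookrightarrow Z(\ml-\Delta)\times Z(\Delta)$. Every segment $\Gamma$ of $\m_{\leq\Delta}$ satisfies $\b(\Delta)=\min\ml\leq\min\m\leq\b(\Gamma)\leq\b(\Delta)$. Hence $\b(\Gamma)=\b(\Delta)$ and $\Gamma\subseteq\Delta$, so these segments are unlinked with $\Delta$ and with each other. Consequently $Z(\m_{\leq\Delta})\times Z(\Delta)\cong Z(\m_{\leq\Delta}+\Delta)$ is irreducible. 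Writing $Z(\m)\hookrightarrow Z(\m-\m_{\leq\Delta})\times Z(\m_{\leq\Delta})$ (again from the ordering convention), we obtain
\[
Z(\m)\times Z(\ml)\hookrightarrow Z(\m-\m_{\leq\Delta})\times Z(\ml-\Delta)\times Z(\m_{\leq\Delta}+\Delta),
\]
after commuting $Z(\m_{\leq\Delta})$ past $Z(\ml-\Delta)$. This commutation is legitimate because segments starting at $\min\ml$ and contained in $\Delta$ do not precede any segment of $\ml-\Delta$, so the product is irreducible. The socle of $Z(\m-\m_{\leq\Delta})\times Z(\ml-\Delta)$ is $Z(\soc(\m-\m_{\leq\Delta},\ml-\Delta))$, and $\soc(\m,\ml)$ must embed into $Z(\soc(\m-\m_{\leq\Delta},\ml-\Delta))\times Z(\m_{\leq\Delta}+\Delta)$. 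To finish, I would show that this last product has as socle exactly the $Z$ of the sum. The reason is that all segments of $\m_{\leq\Delta}+\Delta$ begin at the global minimum of the cuspidal support, so the standard ``leftmost-begin'' criterion of Zelevinsky gives $\soc(Z(\mathfrak n)\times Z(\mathfrak k))=Z(\mathfrak n+\mathfrak k)$ whenever all segments of $\mathfrak k$ start at $\min$ and are maximal there.

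For b), I would use that $\mathcal{MW}(\mathfrak n)=(\mathfrak n^\dag,\Delta^\circ(\mathfrak n))$ is read off from iterated $\rho|\cdot|^{c}$-derivatives (Jacquet restrictions) starting at $c=\min\mathfrak n$. Concretely, $Z(\mathfrak n)\hookrightarrow Z(\mathfrak n^\dag)\times Z(\Delta^\circ(\mathfrak n))$ with the appropriate orientation, and $\Delta^\circ$ is characterised by maximality of this embedding. Since $\min\ml>\min\m$, the relevant derivative functor annihilates nothing from $Z(\ml)$ at the first stages. The geometric lemma then shows that $Z(\m)\times Z(\ml)\hookrightarrow Z(\m^\dag)\times Z(\Delta^\circ(\m))\times Z(\ml)$. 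Moreover, $Z(\Delta^\circ(\m))$ commutes with $Z(\ml)$ up to the relevant subquotient, because $\Delta^\circ(\m)$ starts strictly below $\min\ml$. This yields an embedding of $\soc(Z(\m)\times Z(\ml))$ into $Z(\soc(\m^\dag,\ml))\times Z(\Delta^\circ(\m))$. Finally, I would check via the derivative characterisation that the first MW step of the irreducible socle is exactly $(\soc(\m^\dag,\ml),\Delta^\circ(\m))$.

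The main obstacle is this last identification in b). One must show that the maximal derivative chain of $\soc(Z(\m)\times Z(\ml))$ is not lengthened by segments of $\ml$, i.e. that $\Delta^\circ$ does not change. I would attack it by a Jacquet-module multiplicity count (Leibniz rule for derivatives) combined with the ladder property that consecutive segments of $\ml$ have strictly increasing beginnings all $>\min\m$.
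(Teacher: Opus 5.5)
The paper gives no proof of this statement; it quotes it from \cite{LM16} and \cite{GL21}. Your sketch therefore has to stand on its own, and it has genuine gaps in both parts.

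\textbf{Part a).} The step commuting $Z(\m_{\leq\Delta})$ past $Z(\ml-\Delta)$ is wrong. Let $\Gamma$ be a segment of $\m_{\leq\Delta}$ and $\Delta_i$ a segment of $\ml-\Delta$. Then $\Gamma$ begins at $\min\ml$, strictly before $\Delta_i$, and $\e(\Gamma)\leq\e(\Delta)<\e(\Delta_i)$. So $\Gamma$ precedes $\Delta_i$ as soon as $\b(\Delta_i)\leq\e(\Gamma)+1$: beginning at the minimum is what makes linking possible, not what excludes it. For example, take $\ml=[2,4]+[1,3]$ and $\m=[1,2]$, so $\m_{\leq\Delta}=[1,2]$. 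Then $Z([1,2])\times Z([2,4])$ has socle $Z([1,4]+[2,2])$, whereas $Z([2,4])\times Z([1,2])$ has socle $Z([2,4]+[1,2])$. The two products are not isomorphic, so your displayed embedding is not obtained.

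Independently, suppose you had $\pi\hookrightarrow A\times B$ with $A=Z(\m-\m_{\leq\Delta})\times Z(\ml-\Delta)$ and $B$ irreducible. You cannot simply pass to $\pi\hookrightarrow\soc(A)\times B$: nothing you say prevents $\pi$ from meeting $\soc(A)\times B$ trivially and embedding into $(A/\soc(A))\times B$.

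Your final step is fine: if every segment of $\mathfrak{k}$ begins at the minimum of the total support, then $\soc(Z(\mathfrak{n})\times Z(\mathfrak{k}))=Z(\mathfrak{n}+\mathfrak{k})$ by the standard-module argument. But you never legitimately reach a product of that shape. That needs a different tool, for instance a Jacquet-module argument combined with a multiplicity-one statement.

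\textbf{Part b)} is essentially not proved. First, the MW step is not encoded by $Z(\mathfrak{n})\hookrightarrow Z(\mathfrak{n}^\dag)\times Z(\Delta^\circ(\mathfrak{n}))$ in either order. For $\mathfrak{n}=[1,1]+[2,2]$ one has $\mathfrak{n}^\dag=0$ and $\Delta^\circ(\mathfrak{n})=[1,2]$, yet $Z(\mathfrak{n})\neq Z([1,2])$. With $\Delta^\circ(\mathfrak{n})=[c,c+k]$, the correct relation is $Z(\mathfrak{n})\hookrightarrow Z([c,c]+[c+1,c+1]+\dots+[c+k,c+k])\times Z(\mathfrak{n}^\dag)$.

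Second, the claim that this factor commutes with $Z(\ml)$ because $\Delta^\circ(\m)$ starts below $\min\ml$ fails for the same reason as in a). Already for $\m=[1,1]$, $\ml=[2,2]$ the product $Z([1,1])\times Z([2,2])$ is reducible. Moreover, only $\min\m$ itself is guaranteed to lie outside the support of $\ml$. Segments of $\ml$ can begin at $\min\m+1,\dots,\min\m+k$, i.e.\ in the rows of the initial sequence, so there is no reason for the later derivative stages to ignore $Z(\ml)$.

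Third, passing from $Z(\m^\dag)\times Z(\ml)$ to its socle inside a larger product has the same defect as in a).

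Most importantly, the decisive claim is exactly what you defer as the ``main obstacle'': that $\Delta^\circ(\soc(\m,\ml))=\Delta^\circ(\m)$, and that removing the initial sequence of $\soc(\m,\ml)$ yields $\soc(\m^\dag,\ml)$. That is the whole content of b). It requires two things, neither of which you supply:
\begin{enumerate}
\item an intrinsic characterisation of $(\m^\dag,\Delta^\circ(\m))$ in terms of $Z(\m)$;
\item a proof that the segments of $\ml$ lying in the rows of the chain neither extend it nor cut it short.
\end{enumerate}
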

		\subsection{MW-Algorithm}\label{sec:mw}
		
			We give a combinatorial description of the Zelevinsky involution following M\oe glin and Waldspurger \cite{MWInvol}. 

			For this let $\m=\sum_{i\in I}\Delta_i$ be a non-zero multisegment. Let $i_0\in I$ be such that $\b(\Delta_{i_0})=\min\m$ with $\e(\Delta_{i_0})$ minimal. After finding $i_0,\dots, i_{l-1}\in I$, choose, if it exists, $i_l\in I$ such that $\b(\Delta_{i_l})=\b(\Delta_{i_{l-1}})+1$, $\e(\Delta_{i_l})>\e(\Delta_{i_{l-1}})$ and $\e(\Delta_{i_l})$ is minimal for such segments. If no such segment exists, the algorithm terminates.
			
In this way we obtain a sequence of indices $i_0,\dots,i_k\in I$. Note that while this sequence is not necessarily unique, the sequence of segments $\Delta_{i_0},\dots,\Delta_{i_k}$ is and we will call this the \textit{initial sequence} of $\m$.

			We then define a new multisegment $\m^\dag$ by replacing each $\Delta_{i_j}$ with $^-\Delta_{i_j}$ leaving everything else unchanged, i.e. by moving each point in the sequence constructed above down by $1$, removing it if it becomes the empty segment. We denote by $\Delta_i^\dag$ the segment of $\m^\dag$ corresponding to the segment $\Delta_i$ of $\m$ in this way. We also keep track of the rows affected by this by setting $\Delta^\circ(\m)=[\b(\Delta_{i_0}),\b(\Delta_{i_k})]=[\min\m,\min\m+k]$.
			
Together this defines a map $\mathcal{MW}:\m\mapsto (\m^\dag,\Delta^\circ(\m))$ taking a non-zero multisegment and producing a pair of a multisegment and a segment.

\begin{ex}\label{ex:mw}
	Consider the multisegment $\m=[7,7]+[6,8]+[5,5]+[4,6]+[3,6]+[3,4]+[2,5]+[2,3]+[1,6]+[1,1]$, where we index segments in the order they appear in the sum. Identifying $\m$ with a constellation of points in the plane we obtain, the diagram in Figure \ref{fig:mwex1}.
	
			\begin{figure}[ht]
				\centering
				\begin{tikzpicture}[x=1.4em,y=1.4em, thick,color = black]
			    \draw[step=1,black,thin] (0,0) grid (8,7);
			    \draw [black] (0.5,6.5) circle (4pt);
			    \draw [black] (5.5,6.5) circle (4pt);
			    \draw [black] (4.5,5.5) circle (4pt);
    				\draw [black] (2.5,5.5) circle (4pt);
    				\draw [black] (3.5,4.5) circle (4pt);
    				\draw [black] (5.5,4.5) circle (4pt);
    				\draw [black] (5.5,3.5) circle (4pt);
    				\draw [black] (4.5,2.5) circle (4pt);
    				\draw [black] (7.5,1.5) circle (4pt);
    				\draw [black] (6.5,0.5) circle (4pt);
    				\node at (0.5,7.5) {$1$};
    				\node at (1.5,7.5) {$2$};
    				\node at (2.5,7.5) {$3$};
    				\node at (3.5,7.5) {$4$};
    				\node at (4.5,7.5) {$5$};
    				\node at (5.5,7.5) {$6$};
    				\node at (6.5,7.5) {$7$};
    				\node at (7.5,7.5) {$8$};
    				\node at (-0.5,6.5) {$1$};
    				\node at (-0.5,5.5) {$2$};
    				\node at (-0.5,4.5) {$3$};
    				\node at (-0.5,3.5) {$4$};
    				\node at (-0.5,2.5) {$5$};
    				\node at (-0.5,1.5) {$6$};
    				\node at (-0.5,0.5) {$7$};
    				\end{tikzpicture}
    				\caption{A multisegment $\m$ as points in the plane.}
    				\label{fig:mwex1}
			\end{figure}
			
	To compute $\mathcal{MW}(\m)$ we first find $	\Delta_{i_0}=[1,1]$ since $\b([1,1])=\min\m$ and $\e([1,1])<\e([1,6])$. Then, in the next row, the first point that is SE of $(1,1)$ is $(2,3)$ so $\Delta_{i_1}=[2,3]$. Continuing, we find that the initial sequence of $\m$ is given by $[1,1],[2,3],[3,4],[4,6]$, which we indicate in red in Figure \ref{fig:exinit}.
	
			\begin{figure}[ht]
				\centering
				\begin{tikzpicture}[x=1.4em,y=1.4em, thick,color = black]
			    \draw[step=1,black,thin] (0,0) grid (8,7);
			    \draw [red] (0.5,6.5) circle (4pt);
			    \draw [black] (5.5,6.5) circle (4pt);
			    \draw [black] (4.5,5.5) circle (4pt);
    				\draw [red] (2.5,5.5) circle (4pt);
    				\draw [red] (3.5,4.5) circle (4pt);
    				\draw [black] (5.5,4.5) circle (4pt);
    				\draw [red] (5.5,3.5) circle (4pt);
    				\draw [black] (4.5,2.5) circle (4pt);
    				\draw [black] (7.5,1.5) circle (4pt);
    				\draw [black] (6.5,0.5) circle (4pt);
    				\node at (0.5,7.5) {$1$};
    				\node at (1.5,7.5) {$2$};
    				\node at (2.5,7.5) {$3$};
    				\node at (3.5,7.5) {$4$};
    				\node at (4.5,7.5) {$5$};
    				\node at (5.5,7.5) {$6$};
    				\node at (6.5,7.5) {$7$};
    				\node at (7.5,7.5) {$8$};
    				\node at (-0.5,6.5) {$1$};
    				\node at (-0.5,5.5) {$2$};
    				\node at (-0.5,4.5) {$3$};
    				\node at (-0.5,3.5) {$4$};
    				\node at (-0.5,2.5) {$5$};
    				\node at (-0.5,1.5) {$6$};
    				\node at (-0.5,0.5) {$7$};
    				\end{tikzpicture}
    				\caption{The initial sequence of $\m$.}
    				\label{fig:exinit}
			\end{figure}
			
	If $\mathcal{MW}(\m)=(\m^\dag,\Delta^\circ(\m))$ then $\m^\dag$ is obtained by moving all red points down one row and leaving everything else unchanged, see Figure \ref{fig:mwex2}. Note that the northwesternmost point $(1,1)$ disappears since it becomes the empty segment $0$. Thus we obtain $\m^\dag=[7,7]+[6,8]+[5,6]+[5,5]+[4,4]+[3,6]+[3,3]+[2,5]+[1,6]$. The segment $\Delta^\circ(\m)=[1,4]$ keeps track of the rows that contain a red dot in $\m$.
	
			\begin{figure}[ht]
				\centering
				\begin{tikzpicture}[x=1.4em,y=1.4em, thick,color = black]
			    \draw[step=1,black,thin] (0,0) grid (8,7);
			    \draw [black] (5.5,6.5) circle (4pt);
			    \draw [black] (4.5,5.5) circle (4pt);
    				\draw [black] (2.5,4.5) circle (4pt);
    				\draw [black] (3.5,3.5) circle (4pt);
    				\draw [black] (5.5,4.5) circle (4pt);
    				\draw [black] (5.5,2.5) circle (4pt);
    				\draw [black] (4.5,2.5) circle (4pt);
    				\draw [black] (7.5,1.5) circle (4pt);
    				\draw [black] (6.5,0.5) circle (4pt);
    				\node at (0.5,7.5) {$1$};
    				\node at (1.5,7.5) {$2$};
    				\node at (2.5,7.5) {$3$};
    				\node at (3.5,7.5) {$4$};
    				\node at (4.5,7.5) {$5$};
    				\node at (5.5,7.5) {$6$};
    				\node at (6.5,7.5) {$7$};
    				\node at (7.5,7.5) {$8$};
    				\node at (-0.5,6.5) {$1$};
    				\node at (-0.5,5.5) {$2$};
    				\node at (-0.5,4.5) {$3$};
    				\node at (-0.5,3.5) {$4$};
    				\node at (-0.5,2.5) {$5$};
    				\node at (-0.5,1.5) {$6$};
    				\node at (-0.5,0.5) {$7$};
    				\end{tikzpicture}
    				\caption{The multisegment $\m^\dag$.}
    				\label{fig:mwex2}
			\end{figure}
\end{ex}

	\subsection{Pairs of Ladders}
	
		To give a non-recursive combinatorial description of $\soc(Z(\ml_1)\times Z(\ml_2))$ for ladders $\ml_1,\ml_2$ we need to lay some groundwork regarding pairs $(\ml_1,\ml_2)$ of ladders. The main concept we introduce here is that of permissibility, which is the technical condition we need to make our combinatorial description work (see Proposition \ref{lem:perm} for the precise statement).
		\begin{defi}\label{def:dominant}
			A pair $(\ml_1,\ml_2)$ of ladders with aligned forms $\ml_1=\Delta_1+\dots+\Delta_k$, $\ml_2=\Delta_1^\prime+\dots+\Delta_{k^\prime}^\prime$ is called \textit{dominant}, if $k\leq k^\prime$ and $\Delta_i\leq\Delta_i^{\prime}$ for $i=1,\dots,k$.
		\end{defi}
		\begin{ex}
			The pair of ladders $([3,4]+[1,2],[4,5]+[2,4])$ is dominant, while the pair $([3,4]+[2,3]+[1,1],[4,5]+[2,4])$ is not. In Figure \ref{fig:domex} these examples are illustrated in $\Z^2$ with segments of the first ladder drawn in white and those of the second ladder drawn in black.
			\begin{figure}
				\centering
				\begin{tikzpicture}[x=1.4em,y=1.4em, thick,color = black]
					\draw[step=1,black,thin] (0,0) grid (5,4);
					\draw [black] (1.5,3.5) circle (4pt);
    					\fill [black] (3.5,2.5) circle (4pt);
    					\draw [black] (3.5,1.5) circle (4pt);
    					\fill [black] (4.5,0.5) circle (4pt);
    					\node at (-0.5,3.5) {$1$};
    					\node at (-0.5,2.5) {$2$};
    					\node at (-0.5,1.5) {$3$};
    					\node at (-0.5,0.5) {$4$};
    					\node at (0.5,4.5) {$1$};
    					\node at (1.5,4.5) {$2$};
    					\node at (2.5,4.5) {$3$};
    					\node at (3.5,4.5) {$4$};
    					\node at (4.5,4.5) {$5$};
    				\end{tikzpicture} \qquad \qquad
    				\begin{tikzpicture}[x=1.4em,y=1.4em, thick,color = black]
					\draw[step=1,black,thin] (0,0) grid (5,4);
					\draw [black] (0.5,3.5) circle (4pt);
					\draw [black] (2.5,2.5) circle (4pt);
    					\fill [black] (3.5,2.5) circle (4pt);
    					\draw [black] (3.5,1.5) circle (4pt);
    					\fill [black] (4.5,0.5) circle (4pt);
    					\node at (-0.5,3.5) {$1$};
    					\node at (-0.5,2.5) {$2$};
    					\node at (-0.5,1.5) {$3$};
    					\node at (-0.5,0.5) {$4$};
    					\node at (0.5,4.5) {$1$};
    					\node at (1.5,4.5) {$2$};
    					\node at (2.5,4.5) {$3$};
    					\node at (3.5,4.5) {$4$};
    					\node at (4.5,4.5) {$5$};
    				\end{tikzpicture}
    				\caption{An example and a non-example of a pair of ladders being dominant}
    				\label{fig:domex}
			\end{figure}
		\end{ex}
		\begin{lem}\label{lem:dagpair}
			Let $(\ml_1,\ml_2)$ be a dominant pair of ladders with $\min\ml_1<\min\ml_2$. Then also $(\ml_1^\dag,\ml_2)$ is dominant.
		\end{lem}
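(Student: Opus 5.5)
The plan is to compute $\ml_1^\dag$ explicitly, using the fact that the initial sequence of a ladder has a very rigid shape, and then to check the dominance inequalities one index at a time.

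\textbf{Step 1: the initial sequence of a ladder.} Write $\ml_1=\Delta_1+\dots+\Delta_k$ and $\ml_2=\Delta_1^\prime+\dots+\Delta_{k^\prime}^\prime$ in aligned form. Then $\min\ml_1=\b(\Delta_k)$, and $\Delta_k$ is the unique segment of $\ml_1$ starting there, so $\Delta_{i_0}=\Delta_k$. Next suppose the sequence has reached $\Delta_i$. A segment of $\ml_1$ with beginning $\b(\Delta_i)+1$ can only be $\Delta_{i-1}$, and it exists exactly when $\b(\Delta_{i-1})=\b(\Delta_i)+1$. In that case its end automatically satisfies $\e(\Delta_{i-1})>\e(\Delta_i)$. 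Hence the initial sequence is $\Delta_k,\Delta_{k-1},\dots,\Delta_j$ for some $j\le k$. It satisfies
\[ \b(\Delta_i)=\b(\Delta_k)+(k-i) \quad (j\le i\le k), \]
and, if $j>1$, the stopping condition together with strict decrease of the beginnings gives $\b(\Delta_{j-1})\ge \b(\Delta_j)+2$.

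\textbf{Step 2: $\ml_1^\dag$ is a ladder with the same indexing.} $\ml_1^\dag$ is obtained by replacing $\Delta_i$ with ${}^-\Delta_i$ for $j\le i\le k$. This raises the beginnings in the consecutive block by one and leaves all ends unchanged. By the gap $\b(\Delta_{j-1})\ge\b(\Delta_j)+2$, the new beginnings are still strictly decreasing along $1,\dots,k$. The ends are unchanged, so they remain strictly decreasing. The only degenerate case is $\Delta_k=[a,a]$, where ${}^-\Delta_k=0$ is dropped. Then $\ml_1^\dag$ has aligned form $\Delta_1,\dots,\Delta_{j-1},{}^-\Delta_j,\dots,{}^-\Delta_{k-1}$, of length $k-1$. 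In all cases the aligned form of $\ml_1^\dag$ has at most $k\le k^\prime$ segments, indexed compatibly with $\ml_1$.

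\textbf{Step 3: the dominance inequalities.} For $i<j$ nothing changed, so $\Delta_i\le\Delta_i^\prime$ still holds. For $j\le i\le k$ the ends are unchanged, so it remains to show $\b(\Delta_i)+1\le\b(\Delta_i^\prime)$, i.e.\ that $\b(\Delta_i)=\b(\Delta_i^\prime)$ is impossible. Suppose it holds for some $i\in[j,k]$. Since beginnings in $\ml_2$ strictly decrease, Step 1 gives
\[ \min\ml_2\le\b(\Delta_k^\prime)\le\b(\Delta_i^\prime)-(k-i)=\b(\Delta_i)-(k-i)=\b(\Delta_k)=\min\ml_1 . \]
This contradicts $\min\ml_1<\min\ml_2$, so $(\ml_1^\dag,\ml_2)$ is dominant.

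The main point requiring care is Step 2: one has to see that the consecutive-beginnings structure of the initial sequence is exactly what keeps $\ml_1^\dag$ a ladder with unchanged indexing, and handle the possibly vanishing segment $\Delta_k$. Step 3 is then the short counting argument above.
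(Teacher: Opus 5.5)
Your proof is correct and is essentially the paper's argument. Both reduce dominance of $(\ml_1^\dag,\ml_2)$ to $\b(\Delta_i)<\b(\Delta_i')$ along the initial sequence $\Delta_k,\dots,\Delta_j$. The paper proves this by induction on $i$; you get it by a single telescoping contradiction. Your Step 2 also makes explicit the ladder and indexing check that the paper leaves implicit.

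One small slip in Step 2: more than one segment can vanish. For example, $\ml_1=[2,2]+[1,1]$ gives $\ml_1^\dag=0$, so your claim that only $\Delta_k$ can disappear is false. However, the vanishing segments are always a terminal block $\Delta_k,\Delta_{k-1},\dots,\Delta_{k-m}$. Dropping them keeps the compatible indexing, so your argument goes through unchanged.
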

		\begin{proof}
			Index $\ml_1,\ml_2$ with aligned forms $\ml_1=\Delta_1+\dots+\Delta_k$, $\ml_2=\Delta_1^\prime+\dots+\Delta_{k^\prime}^\prime$ and let $\Delta_k,\dots,\Delta_{k-l}$ be the initial sequence of $\ml_1$ for some $l\in\{0,1,\dots,k-1\}$. It is enough to show $\b(\Delta_i)<\b(\Delta_i^\prime)$ for all $i=k,k-1,\dots,k-l$, since then $\Delta_i^\dag\leq\Delta_i^\prime$ for all $i=1,\dots,k$ so that $(\ml_1^\dag,\ml_2)$ is dominant.
			
			 This can be seen by induction on $i$: for $i=k$ we have by assumption $\b(\Delta_k)=\min\ml_1<\min\ml_2\leq \b(\Delta_k^\prime)$. Now assume the claim holds for some $i=k,k-1,\dots,k-l+1$, then we conclude $\b(\Delta_{i-1})=\b(\Delta_i)+1<\b(\Delta_i^\prime)+1\leq \b(\Delta_{i-1}^\prime)$ by induction and the fact that $\ml_2$ is a ladder.
		\end{proof}
		\begin{rmk}
			In \cite{GL21} the RSK-correspondence is used to construct tuples of ladders $(\ml_1,\dots,\ml_k)$ with prescribed $\soc(Z(\ml_k)\times\dots\times Z(\ml_1))$. The monotonicity conditions in columns in the tableaux appearing in the RSK-correspondence translates to the above dominance condition on pairs of ladders. In the following we adapt these ideas to cover also the case of non-dominant $(\ml_1,\ml_2)$, see also Section \ref{sec:appl} where the connection to RSK is explored in more detail.
		\end{rmk}
		
	To deal also with non-dominant pairs $(\ml_1,\ml_2)$ we will decompose such pairs into a part that is dominant and one that is not. We do this in such a way that the dominant part is as large as possible (in a certain sense), which automatically gives the remaining pair a property that is dual to dominance. Then we can treat the two pieces in a very similar fashion.
	
	\begin{defi}
		Let $(\ml_1,\ml_2)$ be a non-dominant pair of ladders. Then we define $\Gamma(\ml_1,\ml_2)$ to be the southeasternmost segment $\Delta$ of $\ml_1$ such that $((\ml_1)_{\ngeq\Delta},(\ml_2)_{\ngeq\Delta})$ is dominant.
		
		If $(\ml_1,\ml_2)$ is dominant, we set $\Gamma(\ml_1,\ml_2)=[\max\ml_2+1,\max\ml_2+1]$ to ensure that $\Delta\leq \Gamma(\ml_1,\ml_2)$ for all segments $\Delta$ of $\ml_1+\ml_2$.
		
		If the ladders are clear from context, we simply write $\Gamma$ for $\Gamma(\ml_1,\ml_2)$.
	\end{defi}
		For a segment $\Delta$ and a multisegment $\m$, the multisegment $\m_{\ngeq\Delta}$ is simply the part of $\m$ consisting of all segments that are not southeast of $\Delta$ under our standard identifications. We will use the segment $\Gamma(\ml_1,\ml_2)$ to split the pair $(\ml_1,\ml_2)$ into the part southeast of $\Gamma$ and the remaining part.
		\begin{ex}\label{ex:Psi}
			We illustrate the previous definition in an example. For this let $\ml_1=[10,17]+[6,16]+[5,14]+[4,13]+[2,11]+[1,9], \ml_2=[9,18]+[8,15]+[7,12]+[3,10]$ be two ladders. We will use this as a running example throughout the article. We will always draw pairs of ladders $(\ml_1,\ml_2)$ by colouring the segments of $\ml_1$ in white and those of $\ml_2$ in black. We illustrate this example in Figure \ref{fig:pairlad}.
			
			Going through the segments of $\ml_1$ and checking the condition on $\Gamma(\ml_1,\ml_2)$ one finds $\Gamma(\ml_1,\ml_2)=[4,13]$. We illustrate this below by colouring the segment $\Gamma=\Gamma(\ml_1,\ml_2)$ in orange as well as indicating the region that is southeast resp. not southeast of it by an orange line.
			\begin{figure}[ht]
			\centering
			\begin{tikzpicture}[x=1.4em,y=1.4em, thick,color = black]
				\draw[step=1,black,thin] (0,0) grid (10,10);
				\draw [black] (0.5,9.5) circle (4pt);
				\draw [black] (2.5,8.5) circle (4pt);
    				\fill [orange] (4.5,6.5) circle (4pt);
    				\draw [black] (5.5,5.5) circle (4pt);
    				\draw [black] (7.5,4.5) circle (4pt);
    				\draw [black] (8.5,0.5) circle (4pt);
    				\fill (1.5,7.5) circle (4pt);
    				\fill (3.5,3.5) circle (4pt);
    				\fill (6.5,2.5) circle (4pt);
    				\fill (9.5,1.5) circle (4pt);
    				\draw [orange] (4,-0.5) -- (4,7) -- (10.5,7);
    				\node at (0.5,10.5) {$9$};
    				\node at (1.5,10.5) {$10$};
    				\node at (2.5,10.5) {$11$};
    				\node at (3.5,10.5) {$12$};
    				\node at (4.5,10.5) {$13$};
    				\node at (5.5,10.5) {$14$};
    				\node at (6.5,10.5) {$15$};
    				\node at (7.5,10.5) {$16$};
    				\node at (8.5,10.5) {$17$};
    				\node at (9.5,10.5) {$18$};
    				\node at (-0.5,9.5) {$1$};
    				\node at (-0.5,8.5) {$2$};
    				\node at (-0.5,7.5) {$3$};
    				\node at (-0.5,6.5) {$4$};
    				\node at (-0.5,5.5) {$5$};
    				\node at (-0.5,4.5) {$6$};
    				\node at (-0.5,3.5) {$7$};
    				\node at (-0.5,2.5) {$8$};
    				\node at (-0.5,1.5) {$9$};
    				\node at (-0.5,0.5) {$10$};
			\end{tikzpicture}
			\caption{A pair of ladders in the plane with cut-off.}\label{fig:pairlad}
		\end{figure}
		Looking at this example, one may notice that $((\ml_1)_{\geq\Gamma},(\ml_2)_{\geq\Gamma})$, i.e. the white and black points southeast of the orange line satisfy a dual version of the dominance condition: writing $(\ml_1)_{\geq\Gamma}=\Delta_1+\dots+\Delta_l+\Gamma$ and $(\ml_2)_{\geq\Gamma}=\Delta_1^\prime+\dots+\Delta_{l^\prime}^\prime$ in aligned form, we have $l\geq l^\prime$ and for each $i=0,\dots,l^\prime-1$ we have $\Delta_{l-i}\leq\Delta_{l^\prime-i}^\prime$. This is not a coincidence as the following lemma shows.
		\end{ex}
		\begin{lem}\label{lem:dualdom}
			Let $(\ml_1,\ml_2)$ be a non-dominant pair of ladders. Writing $(\ml_1)_{\geq\Gamma}=\Delta_1+\dots+\Delta_l+\Gamma$, $(\ml_2)_{\geq\Gamma}=\Delta_1^\prime+\dots+\Delta_{l^\prime}^\prime$ in aligned form, we have $l\geq l^\prime$ and $\Delta_{l-i}\leq\Delta_{l^\prime-i}^\prime$ for $i=0,\dots,l^\prime-1$.
		\end{lem}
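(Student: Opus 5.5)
The plan is to argue by contradiction, using only the maximality in the definition of $\Gamma=\Gamma(\ml_1,\ml_2)$ together with the ladder property of both $\ml_1$ and $\ml_2$.

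Since $\ml_1$ is a ladder, its segments are totally ordered by $\leq$. Hence $(\ml_1)_{\ngeq\Gamma}$ consists exactly of the segments of $\ml_1$ strictly northwest of $\Gamma$. Write these in aligned form as $A_1+\dots+A_k$. Write $(\ml_2)_{\ngeq\Gamma}=B_1+\dots+B_{k^\prime}$ in aligned form. By definition of $\Gamma$ this pair is dominant, so $k\leq k^\prime$ and $A_i\leq B_i$.

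For each $j=1,\dots,l$, the segment $\Delta_j$ lies southeast of $\Gamma$. By maximality of $\Gamma$, the pair $((\ml_1)_{\ngeq\Delta_j},(\ml_2)_{\ngeq\Delta_j})$ is therefore \emph{not} dominant. Here $(\ml_1)_{\ngeq\Delta_j}=\Delta_{j+1}+\dots+\Delta_l+\Gamma+A_1+\dots+A_k$. The multisegment $(\ml_2)_{\ngeq\Delta_j}$ consists of $B_1+\dots+B_{k^\prime}$ together with those $\Delta^\prime_m$ that are $\geq\Gamma$ but $\ngeq\Delta_j$. By the ladder property of $\ml_2$, these $\Delta^\prime_m$ form a terminal (northwesternmost) block $\Delta^\prime_{m_j},\dots,\Delta^\prime_{l^\prime}$, and $m_j$ is non-increasing as $j$ decreases.

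The argument then proceeds in two steps.

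\textbf{Step 1.} Let $i$ be minimal such that the conclusion fails at $i$. Failure means either $l-i<1\leq l^\prime-i$, i.e. $\ml_1$ runs out first, or $\Delta_{l-i}\nleq\Delta^\prime_{l^\prime-i}$.

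In the second case, take $j=l-i$. Minimality of $i$ gives $\Delta_{l-i^\prime}\leq\Delta^\prime_{l^\prime-i^\prime}$ for $i^\prime<i$. From this and the ladder property, identify the block $\Delta^\prime_{m_j},\dots,\Delta^\prime_{l^\prime}$ of $\ml_2$-segments that are $\geq\Gamma$ but $\ngeq\Delta_{l-i}$. The expected outcome is that this block is exactly $\Delta^\prime_{l^\prime-i},\dots,\Delta^\prime_{l^\prime}$: the failure $\Delta_{l-i}\nleq\Delta^\prime_{l^\prime-i}$ together with $\Delta^\prime_{l^\prime-i}$ being SE of $\Gamma$ should force $\Delta^\prime_{l^\prime-i}\ngeq\Delta_{l-i}$. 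The cut pair at $\Delta_{l-i}$ then has $\ml_1$-part $\Delta_{l-i+1},\dots,\Delta_l,\Gamma,A_1,\dots,A_k$ and $\ml_2$-part $\Delta^\prime_{l^\prime-i},\dots,\Delta^\prime_{l^\prime},B_1,\dots,B_{k^\prime}$. The $\ml_1$-part has $i+1+k$ elements and the $\ml_2$-part has $i+1+k^\prime$, so the count condition $k\leq k^\prime$ is inherited.

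\textbf{Step 2.} Check the componentwise inequalities for this cut pair, namely $\Gamma\leq\Delta^\prime_{l^\prime-i}$ and $\Delta_{l-i+s}\leq\Delta^\prime_{l^\prime-i+s}$ for $s\geq1$. The tail $A_r\leq B_r$ is given. This would make the cut at $\Delta_{l-i}$ dominant, contradicting maximality of $\Gamma$.

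The first case ($\ml_1$ exhausted, $l<l^\prime$) is handled the same way with the cut at $\Delta_1$. Alternatively, one shows that then the whole pair $(\ml_1,\ml_2)$ would be dominant, contradicting the hypothesis that it is non-dominant.

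The main obstacle is that the dual condition pairs $\Delta_{l-i}$ with $\Delta^\prime_{l^\prime-i}$, while dominance of a cut pair pairs segments counted from the southeast. So one must show that the index shift caused by inserting $\Gamma$ and the extra $\ml_2$-segments lines up exactly. In particular, $\Gamma\leq\Delta^\prime_{l^\prime-i}$ must be derived from $\Delta^\prime_{l^\prime-i}\geq\Gamma$, which is where the choice of $j$ matters. To handle this I would first try the special case $i=0$ directly: compare $\Delta_l$ with $\Delta^\prime_{l^\prime}$ and cut at $\Delta_l$. This should reveal the correct choice of cut point, which might instead be the segment immediately northwest of $\Delta_{l-i}$. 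I would then run an induction on $i$ with that choice.
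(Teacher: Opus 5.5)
Your overall route is the paper's: take the first index $i$ at which the conclusion fails, cut at $\Delta_{l-i}$ itself, and show that $((\ml_1)_{\ngeq\Delta_{l-i}},(\ml_2)_{\ngeq\Delta_{l-i}})$ is dominant, contradicting the maximality of $\Gamma$. However, the step that carries the proof, verifying that this cut pair is dominant, is not done, and what you wrote for it is wrong in two places.

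\textbf{The block of $\ml_2$-segments.} From $\Delta_{l-i}\nleq\Delta'_{l'-i}$ you only get that the $\ml_2$-segments which are $\geq\Gamma$ but $\ngeq\Delta_{l-i}$ \emph{contain} $\Delta'_{l'-i},\dots,\Delta'_{l'}$. Nothing forces $\Delta'_{l'-i-1}\geq\Delta_{l-i}$. So in general this block is $\Delta'_m,\dots,\Delta'_{l'}$ with $p=l'-m+1\geq i+1$, and your length count and your alignment of $A_s$ with $B_s$ are unjustified.

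\textbf{The Step 2 inequalities.} The inequalities you list, $\Gamma\leq\Delta'_{l'-i}$ and $\Delta_{l-i+s}\leq\Delta'_{l'-i+s}$, are not the dominance conditions for the cut pair. They are just the minimality hypothesis plus a tautology. Dominance compares the $t$-th entries, counted from the southeast, of $\Delta_{l-i+1}+\dots+\Delta_l+\Gamma+A_1+\dots+A_k$ and $\Delta'_m+\dots+\Delta'_{l'}+B_1+\dots+B_{k'}$.

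\textbf{The missing observation.} The $\ml_2$-side is shifted southeast by $p-i\geq 1$ positions relative to the $\ml_1$-side. Because both are ladders, such a shift only helps:
\begin{itemize}
\item For $t=1,\dots,i$ you compare $\Delta_{l-i+t}$ with $\Delta'_{m+t-1}$. Since $m+t-1<l'-i+t$, the ladder property and minimality of $i$ give $\Delta'_{m+t-1}>\Delta'_{l'-i+t}\geq\Delta_{l-i+t}$.
\item $\Gamma$ is compared with $\Delta'_{m+i}\geq\Gamma$.
\item Each $A_s$ is compared either with some $\Delta'\geq\Gamma>A_s$, or with $B_{s'}$ where $s'=i+1+s-p\leq s$. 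In the latter case $A_s\leq A_{s'}\leq B_{s'}$ by dominance of the cut at $\Gamma$.
\item The lengths satisfy $i+1+k\leq p+k'$.
\end{itemize}
So no change of cut point is needed. Moving the cut to the segment northwest of $\Delta_{l-i}$ (e.g.\ to $\Gamma$ when $i=0$) yields no contradiction at all.

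\textbf{The case $l<l'$.} Your first suggestion, cutting at $\Delta_1$, also fails. Here $\Delta_1\leq\Delta'_{l'-l+1}$, so at most $l-1$ segments $\Delta'$ survive the cut. Then $\Gamma$ ends up compared with some $B_r\ngeq\Gamma$, or the $\ml_2$-side is too short. Your alternative is the right one: the same shift computation with $m=1$ and $p=l'\geq l+1$ shows that $(\ml_1,\ml_2)$ itself is dominant, contradicting the hypothesis.
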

		\begin{proof}
			We argue by contradiction. So assume that there is some $i=0,\dots,l^\prime-1$ such that $\Delta_{l-i}\nleq\Delta_{l^\prime-i}^\prime$ and let $j$ be the minimal such index. We will derive a contradiction to the maximality in the definition of $\Gamma$. 
			
			Indeed by assumption we have $\Delta_{l-j}\nleq\Delta_{l^\prime-j}$ and $\Delta_{l-i}\leq\Delta_{l-i}^\prime$ for $i=0,\dots,j-1$. Since we have $\Gamma\leq \Delta_{l^\prime-i}^\prime$ for all $i=0,\dots,l-1$ by construction, this shows that $(\Delta_{j-1}+\dots+\Delta_l+\Gamma,\Delta_{l^\prime-j}^\prime,\dots,\Delta_{l^\prime}^\prime)$ is dominant. But then also $((\ml_1)_{\ngeq\Delta_{l-i}},(\ml_2)_{\ngeq\Delta_{l-i}})$ is dominant, which contradicts the fact that $\Gamma$ is the $\leq$-maximal segment of $\ml_1$ with this property and $\Gamma<\Delta_{l-i}$.
		\end{proof}
		\begin{rmk}
			The map $\Delta\mapsto\Delta^\vee=[-\e(\Delta),-\b(\Delta)]$ on segments induces the map $Z(\m)\mapsto Z(\m^\vee)$ on irreducible representations by applying it to each segment of $\m$. It is well-known that $Z(\m^\vee)=Z(\m)^\vee$, the contragredient of $Z(\m)$. The above lemma can thus also be rephrased to say that $((\ml_2)_{\geq\Gamma}^\vee,(\ml_1)_{\geq\Gamma}^\vee)$ is dominant and in this sense $((\ml_1)_{\geq\Gamma},(\ml_2)_{\geq\Gamma})$ satisfies a condition that is dual to dominance.
		\end{rmk}
		
		Roughly, we will give a way of computing $\soc(\ml_1,\ml_2)$ that works by pairing certain segments $\Delta$ of $\ml_1$ with segments $\Delta^\prime$ of $\ml_2$ and doing union-intersection on these pairs. To make sure that this is well-defined, i.e. that union-intersection actually produces segments, we need to impose a condition on the pair $(\ml_1,\ml_2)$ of ladders.
		\begin{defi}
			Let $(\ml_1,\ml_2)$ be a pair of ladders with aligned forms $\ml_1=\Delta_1+\dots+\Delta_k$, $\ml_2=\Delta_1^\prime+\dots+\Delta_{k^\prime}^\prime$.
			\begin{enumerate}[a)]
				\item If $(\ml_1,\ml_2)$ is dominant, we call it \textit{permissible} if for all $i=1,\dots,k$ and $j$ maximal such that $\Delta_i\leq \Delta_j^\prime$ we have that $\Delta_r \cup \Delta^\prime_{j-i+r}$ is a segment for $r=1,\dots,i$.
				\item If $(\ml_1,\ml_2)$ is not dominant we call it \textit{permissible} if, with $\Gamma=\Gamma(\ml_1,\ml_2)$, both dominant pairs $((\ml_1)_{\ngeq\Gamma},(\ml_2)_{\ngeq\Gamma})$ and $((\ml_2)_{\geq\Gamma}^\vee,(\ml_1)_{\geq\Gamma}^\vee)$ are permissible.
			\end{enumerate}
		\end{defi}
		\begin{ex}
			An example of a permissible pair of ladders is given in Figure \ref{fig:pairlad}. Non-permissible pairs $(\ml_1,\ml_2)$ can only occur if there are segments $\Delta_1,\Delta_2$ in $\ml_1+\ml_2$ such that $\e(\Delta_1)+1<\b(\Delta_2)$, see also the following remark. In Figure \ref{fig:nonperm} the non-permissible pair $(\ml_1,\ml_2)=([5,6]+[4,4]+[1,1],[6,6]+[3,4])$ is depicted in $\Z^2$. The pair is non-permissible since $\Gamma(\ml_1,\ml_2)=[4,4]$ and the northwest pair is then $([1,1],[3,4])$, but $[1,1]\cup[3,4]$ is not a segment. In Lemma \ref{lem:perm} the reason for this definition of permissibility will become clear.
		
		\begin{figure}[ht]
			\centering
			\begin{tikzpicture}
				[x=1.4em,y=1.4em, thick,color = black]
				\draw[step=1,black,thin] (0,0) grid (6,6);
				\draw [black] (0.5,5.5) circle (4pt);
				\draw [black] (3.5,2.5) circle (4pt);
    				\draw [black] (5.5,1.5) circle (4pt);
    				\fill [black] (3.5,3.5) circle (4pt);
    				\fill [black] (5.5,0.5) circle (4pt);
    				\node at (0.5,6.5) {$1$};
    				\node at (1.5,6.5) {$2$};
    				\node at (2.5,6.5) {$3$};
    				\node at (3.5,6.5) {$4$};
    				\node at (4.5,6.5) {$5$};
    				\node at (5.5,6.5) {$6$};
    				\node at (-0.5,5.5) {$1$};
    				\node at (-0.5,4.5) {$2$};
    				\node at (-0.5,3.5) {$3$};
    				\node at (-0.5,2.5) {$4$};
    				\node at (-0.5,1.5) {$5$};
    				\node at (-0.5,0.5) {$6$};
			\end{tikzpicture}
			\caption{A non-permissible pair of ladders.}
			\label{fig:nonperm}
		\end{figure}
		
		\end{ex}
		\begin{rmk}
			For dominant $(\ml_1,\ml_2)$ the condition that $\Delta_r \cup \Delta_{j-i+r}^\prime$ be a segment in the definition of permissibility is equivalent to $\e(\Delta_r)+1\geq\b(\Delta_{j-i+r}^\prime)$. Our notion of permissibility thus extends that of \cite{GL21} in that we allow juxtaposed segments in the dominant case and that we also include non-dominant pairs in general.
		\end{rmk}
		The following is obvious from the definition of permissibility and the preceding remark.
		\begin{lem}
			If $(\ml_1,\ml_2)$ is a pair of ladders that satisfies $\b(\Delta)\leq \e(\Delta^\prime)+1$ for all segments $\Delta,\Delta^\prime$ of $\ml_1+\ml_2$ then $(\ml_1,\ml_2)$ is permissible.
		\end{lem}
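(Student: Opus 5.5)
The plan is to reduce everything to the dominant case, where the remark preceding the lemma does the work. I will split according to whether $(\ml_1,\ml_2)$ is dominant, and in each case check the union condition in the definition of permissibility.

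\emph{Dominant case.} Write the aligned forms $\ml_1=\Delta_1+\dots+\Delta_k$ and $\ml_2=\Delta_1^\prime+\dots+\Delta_{k^\prime}^\prime$. By the preceding remark, the condition that $\Delta_r\cup\Delta^\prime_{j-i+r}$ be a segment is equivalent to $\e(\Delta_r)+1\geq \b(\Delta^\prime_{j-i+r})$. One subtlety is that this reformulation uses the ordering $\Delta_r\leq\Delta^\prime_{j-i+r}$. This ordering holds because $\Delta_r\leq\Delta^\prime_r$ by dominance, and since $j\geq i$ we have $j-i+r\geq r$, so $\Delta^\prime_{j-i+r}$ lies weakly southeast of $\Delta^\prime_r$. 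Given this, the inequality $\e(\Delta_r)+1\geq\b(\Delta^\prime_{j-i+r})$ is just the hypothesis applied to the pair $\Delta^\prime_{j-i+r}$, $\Delta_r$, in that order. Hence the union is a segment, and the pair is permissible.

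\emph{Non-dominant case.} Let $\Gamma=\Gamma(\ml_1,\ml_2)$. We must show that the two dominant pairs $((\ml_1)_{\ngeq\Gamma},(\ml_2)_{\ngeq\Gamma})$ and $((\ml_2)_{\geq\Gamma}^\vee,(\ml_1)_{\geq\Gamma}^\vee)$ are permissible. Their dominance comes from the definition of $\Gamma$ and from Lemma \ref{lem:dualdom} together with the subsequent remark. The first pair consists of segments of $\ml_1+\ml_2$, so it inherits the hypothesis, and the dominant case applies. For the second pair, the hypothesis is invariant under $\vee$: for segments $\Delta,\Delta^\prime$,
\[ \b(\Delta^{\prime\vee})=-\e(\Delta^\prime)\leq -\b(\Delta)+1=\e(\Delta^\vee)+1, \]
which is the hypothesis with the roles of the two segments swapped. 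Since the hypothesis ranges over all ordered pairs of segments, the dualised pair again satisfies it, and the dominant case applies once more.

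The only step needing care is the dominant case: one has to confirm that the preceding remark's reformulation is valid, which it is because dominance together with $j\geq i$ gives $\Delta_r\leq\Delta^\prime_{j-i+r}$. Once that is checked, the argument is immediate.
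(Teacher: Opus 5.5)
Your route is the paper's (definition of permissibility plus the preceding remark, with duality for the southeast part), and the non-dominant case is fine. The northwest pair inherits the hypothesis, and your computation $\b(\Delta^{\prime\vee})=-\e(\Delta^\prime)\leq-\b(\Delta)+1=\e(\Delta^\vee)+1$ correctly shows that the hypothesis survives dualisation.

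\textbf{The error is in the dominant case, at the step you flag as the subtle one.} In the paper's aligned form $\b(\Delta^\prime_1)>\dots>\b(\Delta^\prime_{k^\prime})$, so indices increase towards the \emph{northwest}. From $j-i+r\geq r$ you therefore get that $\Delta^\prime_{j-i+r}$ lies weakly northwest of $\Delta^\prime_r$, not southeast. Then $\Delta_r\leq\Delta^\prime_r$ says nothing about $\Delta_r$ versus $\Delta^\prime_{j-i+r}$.

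The ordering really can fail under the hypotheses of the lemma. Take $\ml_1=[3,7]+[1,4]$ and $\ml_2=[4,8]+[2,6]+[1,5]$. This pair is dominant and satisfies $\b(\Delta)\leq\e(\Delta^\prime)+1$ for all segments. For $i=2$ the maximal $j$ is $3$ (since $[1,4]\leq[1,5]$), and $r=1$ gives $\Delta_1=[3,7]$, $\Delta^\prime_2=[2,6]$, with $[3,7]\nleq[2,6]$.

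So your justification for applying the remark is false. You also cannot simply drop it: the single inequality $\e(\Delta_r)+1\geq\b(\Delta^\prime_{j-i+r})$ you derive does not by itself make the union a segment (think of $[10,20]$ and $[3,7]$).

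\textbf{The repair is short and makes the remark unnecessary.} The hypothesis is quantified over all ordered pairs of segments, so apply it in both directions. This gives $\b(\Delta_r)\leq\e(\Delta^\prime_{j-i+r})+1$ as well as $\b(\Delta^\prime_{j-i+r})\leq\e(\Delta_r)+1$. Two segments satisfying both inequalities overlap or are adjacent, so their union is a segment however they are positioned.

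In other words, under the hypothesis the union of \emph{any} two segments of $\ml_1+\ml_2$ is a segment, which is what makes the lemma immediate from the definition. With this replacing your ordering claim, and with your inheritance and duality arguments for the non-dominant case, the proof is complete.
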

		Thus our setup encompasses the setting of regular ladders as in \cite{Gurevich2020} where $\soc(Z(\ml_1)\times Z(\ml_2))$ was described in a somewhat different fashion to our setting in Theorem 6.11 of that paper.
		
		In Section \ref{sec:appl} we will tie the construction of $\soc(\ml_1,\ml_2)$ below in with classical RSK combinatorics. There we restrict to the setting $\b(\Delta)\leq \e(\Delta^\prime)$ and the above lemma guarantees that we do not have to worry about permissibility there.
	\section{Combinatorial Construction}\label{sec:mat}
	
		The goal of this section is to introduce a combinatorial map $\Psi$ mapping permissible pairs $(\ml_1,\ml_2)$ to a multisegment $\Psi(\ml_1,\ml_2)$. In Theorem \ref{thm:soc} we will prove that in fact $\Psi(\ml_1,\ml_2)=\soc(\ml_1,\ml_2)$ so that $\Psi$ computes $\soc(Z(\ml_1)\times Z(\ml_2))$ in a combinatorial way.
		
		Roughly, $\Psi$ is defined by matching segments of $\ml_1$ with segments of $\ml_2$ and performing union-intersection on matched pairs. The permissibility condition we introduced in the previous section will guarantee that this is well-defined, i.e. that all matched pairs $(\Delta,\Delta^\prime)$ of segments satisfy $\Delta\leq\Delta^\prime$ and $\Delta\cup\Delta^\prime$ is also a segment, so that the union-intersection of them is well-defined. 
		
		Let $(\ml_1,\ml_2)$ be any pair of ladders. To simplify notation, we refer to $((\ml_1)_{\geq\Gamma},(\ml_2)_{\geq\Gamma})$ as the \textit{southeast} part and to $((\ml_1)_{\ngeq\Gamma},(\ml_2)_{\ngeq\Gamma})$ as the \textit{northwest} part of $(\ml_1,\ml_2)$. And we think of $(\ml_1,\ml_2)$ as white and black points in the plane as before. It is instructive to look at Figure \ref{fig:pairlad} and check that the result of the procedure explained below is indeed as in Figure \ref{fig:expsi}.
		
		In the northwest part, starting with the north-most white point, we go over every white point and match it with the first black point that is southeast of it and has not been matched before. In the southeast part, we perform the 'dual' matching: starting with the south-most black point, we go over every black point and match it with the first white point that is northwest of it.
		
		The construction of $\Gamma$ guarantees that this matching is always possible: the northwest part of $(\ml_1,\ml_2)$ is dominant and it is easy to see that this dominance guarantees that all white points in this region will be matched to a black point in this way. By Lemma \ref{lem:dualdom} the southeast part of $(\ml_1,\ml_2)$ enjoys a 'dual-dominance' condition, guaranteeing that all black points are matched in this part. 
		
		This defines a set $M(\ml_1,\ml_2)=\{(\Delta,\Delta^\prime)\colon \Delta\text{ matched with }\Delta^\prime\}$ of pairs $(\Delta,\Delta^\prime)$ where $\Delta$ is a segment of $\ml_1$ and $\Delta^\prime$ is a segment of $\ml_2$. By construction, we have $\Delta\leq\Delta^\prime$ (i.e. the white point is northwest of the black point) for each  $(\Delta,\Delta^\prime)\in M(\ml_1,\ml_2)$. If $(\ml_1,\ml_2)$ is permissible, we can say even more.
		\begin{lem}\label{lem:perm}
			Let $(\ml_1,\ml_2)$ be a pair of ladders. The following are equivalent.
			\begin{enumerate}[i)]
				\item $(\ml_1,\ml_2)$ is permissible.
				\item $\Delta\cup\Delta^\prime$ is a segment for all $(\Delta,\Delta^\prime)\in M(\ml_1,\ml_2)$.
			\end{enumerate}
		\end{lem}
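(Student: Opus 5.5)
The plan is to reduce everything to the dominant case and then compute the greedy matching explicitly. The matching $M(\ml_1,\ml_2)$ is the union of the matching on the northwest part and the dual matching on the southeast part. For a non-dominant pair, permissibility is by definition the conjunction of permissibility of $((\ml_1)_{\ngeq\Gamma},(\ml_2)_{\ngeq\Gamma})$ and of $((\ml_2)_{\geq\Gamma}^\vee,(\ml_1)_{\geq\Gamma}^\vee)$. The map $\Delta\mapsto\Delta^\vee$ reverses the order $\leq$ and preserves the property that $\Delta\cup\Delta^\prime$ is a segment. Consequently the dual matching on the southeast part (go over black points from the south-most one, match each with the first unmatched white point northwest of it) becomes, after applying $\vee$ and swapping the roles of the two ladders, exactly the ordinary matching for the dominant pair $((\ml_2)_{\geq\Gamma}^\vee,(\ml_1)_{\geq\Gamma}^\vee)$. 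Lemma \ref{lem:dualdom} is what makes this pair dominant. So it suffices to prove the equivalence for a dominant pair $(\ml_1,\ml_2)$, where $M$ is the northwest greedy matching.

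In the dominant case, write the aligned forms as $\ml_1=\Delta_1+\dots+\Delta_k$ and $\ml_2=\Delta^\prime_1+\dots+\Delta^\prime_{k^\prime}$, and let $j(i)$ be the maximal index with $\Delta_i\le\Delta^\prime_{j(i)}$. The key step is an explicit description of the greedy matching. White points are processed from the north-most, i.e.\ $\Delta_k,\Delta_{k-1},\dots,\Delta_1$. Each $\Delta_i$ is matched with the north-most unmatched black point southeast of it, and the black points southeast of $\Delta_i$ are exactly $\Delta^\prime_1,\dots,\Delta^\prime_{j(i)}$, since $\ml_2$ is a ladder. Say $\Delta_i$ is matched with $\Delta^\prime_{m(i)}$. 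I would prove by downward induction on $i$ the formula
\[ m(i)=\min\{\, j(s)-(s-i)\colon s\ge i\,\}, \]
which is a ``blocking'' recursion: $m(i)=\min(j(i),m(i+1)-1)$. Dominance ($\Delta_i\le\Delta^\prime_i$, hence $j(i)\ge i$) guarantees that $m(i)\ge i\ge1$, so the matching exists.

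Given this formula, take the index $s\ge i$ achieving the minimum. Then $\Delta_i$ is matched with $\Delta^\prime_{j(s)-s+i}$, which is precisely one of the pairs $(\Delta_r,\Delta^\prime_{j-i+r})$ (with $s$ playing the role of $i$ and $i$ playing the role of $r$) whose unions permissibility requires to be segments. This proves (i)$\Rightarrow$(ii).

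For (ii)$\Rightarrow$(i), fix $s$ and $r\le s$. I need to show that $\Delta_r\cup\Delta^\prime_{j(s)-s+r}$ is a segment, i.e.\ $\e(\Delta_r)+1\ge\b(\Delta^\prime_{j(s)-s+r})$. From the formula, $m(r)\le j(s)-s+r$. Since $\ml_2$ is a ladder, $\b(\Delta^\prime_{j(s)-s+r})\le\b(\Delta^\prime_{m(r)})\le\e(\Delta_r)+1$, where the last inequality is (ii). So the union is a segment. For the southeast part one argues symmetrically after dualising. In the non-dominant case both parts are handled this way, and the two halves of $M$ are disjoint because they live in the northwest and southeast regions.

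I expect the main obstacle to be establishing the closed formula for $m(i)$ cleanly. The required monotonicity facts are: $j(i)$ is non-increasing as $i$ decreases, and the greedy choice always takes the largest available index $\le j(i)$, which is why $m(i)=\min(j(i),m(i+1)-1)$. One also has to check carefully the indexing convention in the definition of permissibility (black points indexed from the southeast, so the north-most black point southeast of $\Delta_i$ is $\Delta^\prime_{j(i)}$). A further point needing care is the bookkeeping in the dual part, namely that the orientation reversal of $\vee$ exactly turns the dual matching rule into the primal one, including the treatment of $\Gamma$ itself, which lies in the southeast part.
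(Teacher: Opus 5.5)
Your proof is correct and takes essentially the same route as the paper. You reduce, via the duality $\vee$ and Lemma \ref{lem:dualdom}, to the dominant case, and there you show that the greedy partner index of $\Delta_i$ equals $j(s)-(s-i)$ for some $s\ge i$, which gives (i)$\Rightarrow$(ii), and is at most $j(s)-(s-i)$ for every $s\ge i$, which gives (ii)$\Rightarrow$(i) by monotonicity of $\b$ along $\ml_2$. Your closed formula $m(i)=\min_{s\ge i}\{j(s)-(s-i)\}$ is in fact a cleaner and more accurate version of what the paper asserts ``by construction'': its claim that $p(i)=s(i)-t$ with $s(i)=\dots=s(i+t)$ fails in general, whereas your argmin yields the correct index $s(i+t)-t$.
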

		\begin{proof}
			Considering the definition of permissibility and Lemma \ref{lem:dualdom}, it is enough to prove this in the case that $(\ml_1,\ml_2)$ is dominant. Then our claim is essentially (with some adjustments to the permissibility condition) contained in the proof of \cite[Proposition 2.4.]{GL21} but for clarity we isolate the argument here. 
			
			By the above discussion, dominance implies that all segments of $\ml_1$ are covered by $M\coloneqq M(\ml_1,\ml_2)$. Writing $\ml_1=\Delta_1+\dots+\Delta_k$, $\ml_2=\Delta_1^\prime+\dots+\Delta_{k^\prime}^\prime$ in aligned form, let $p:[k]\rightarrow [k^\prime]$ be the map defined by the matching $M(\ml_1,\ml_2)$.
		
			Assume first that $(\ml_1,\ml_2)$ is permissible. For $i=1,\dots,k$ let $s(i)=\max\{j\colon \Delta_i\leq\Delta_j^\prime\}$. Then the construction of $M$ makes it clear that for all $i=1,\dots,k$ there is $t=0,\dots,k-i$ such that $p(i)=s(i)-t$, which is equivalent to $s(i)=s(i+1)=\dots=s(i+t)$. Permissibility of $(\ml_1,\ml_2)$ then implies (take $t+i$ as $i$ and $i$ as $r$ in the definition of permissibility)
				\begin{align*}
					\e(\Delta_{i})+1\geq \b(\Delta^\prime_{s(i+t)-(i+t)+i})=\b(\Delta^\prime_{s(i)-t})=\b(\Delta^\prime_{p(i)}).
				\end{align*}
			This shows that $\Delta_i\cup\Delta_{p(i)}^\prime$ is a segment for all $i=1,\dots,k$, which is what we wanted to show.
			
			Now assume that ii) holds. As above, we note that by construction of $M$, we have 
			\begin{align*}
				p(i)\leq s(i+r)-r
			\end{align*}
			for all $i=1,\dots,k$ and $r=0,\dots,k-i$. Plugging in the definition of $s$ and recalling that $\Delta_i\leq \Delta_{p(i)}^\prime$ permissibility of $(\ml_1,\ml_2)$ follows.
		\end{proof}
		
		The lemma guarantees that the following is well-defined.
		\begin{defi}
			Let $(\ml_1,\ml_2)$ be a permissible pair of ladders and $M(\ml_1,\ml_2)$ as defined above. Then we define $\Psi(\ml_1,\ml_2)$ to be the multisegment obtained from $\ml_1+\ml_2$ by doing union-intersection on all pairs $(\Delta,\Delta^\prime)\in M(\ml_1,\ml_2)$. In other words
			\begin{align*}
				\Psi(\ml_1,\ml_2)\coloneqq\sum_{(\Delta,\Delta^\prime)\in M(\ml_1,\ml_2)}\lr{\Delta\cup\Delta^\prime+\Delta\cap\Delta^\prime}+\sum_{\substack{\Delta\text{ in }\ml_1\\ \Delta\text{ unmatched}}}\Delta+\sum_{\substack{\Delta^\prime\text{ in }\ml_2\\ \Delta^\prime\text{ unmatched}}}\Delta^\prime.
			\end{align*}
		\end{defi}
		In our running example this produces the multisegment $\Psi(\ml_1,\ml_2)=[9,16]+[8,14]+[7,11]+[6,17]+[5,15]+[4,13]+[3,9]+[2,12]+[1,10]$. Comparing with Figure \ref{fig:pairlad} this produces the picture after 'swapping' matched points and forgetting about the colouring.
		\begin{figure}[ht]
		\centering
		\begin{tikzpicture}[x=1.4em,y=1.4em, thick,color = black]
			\draw[step=1,black,thin] (0,0) grid (9,9);
			\draw [black] (1.5,8.5) circle (4pt);
			\draw [black] (3.5,7.5) circle (4pt);
    			\draw [black] (4.5,5.5) circle (4pt);
    			\draw [black] (6.5,4.5) circle (4pt);
    			\draw [black] (8.5,3.5) circle (4pt);
    			\draw [black] (0.5,6.5) circle (4pt);
    			\draw [black] (2.5,2.5) circle (4pt);
    			\draw [black] (5.5,1.5) circle (4pt);
    			\draw [black] (7.5,0.5) circle (4pt);
    			\node at (0.5,9.5) {$9$};
    			\node at (1.5,9.5) {$10$};
    			\node at (2.5,9.5) {$11$};
    			\node at (3.5,9.5) {$12$};
    			\node at (4.5,9.5) {$13$};
    			\node at (5.5,9.5) {$14$};
    			\node at (6.5,9.5) {$15$};
    			\node at (7.5,9.5) {$16$};
    			\node at (8.5,9.5) {$17$};
    			\node at (-0.5,8.5) {$1$};
    			\node at (-0.5,7.5) {$2$};
    			\node at (-0.5,6.5) {$3$};
    			\node at (-0.5,5.5) {$4$};
    			\node at (-0.5,4.5) {$5$};
    			\node at (-0.5,3.5) {$6$};
    			\node at (-0.5,2.5) {$7$};
    			\node at (-0.5,1.5) {$8$};
    			\node at (-0.5,0.5) {$9$};
		\end{tikzpicture}
		\caption{The result of applying $\Psi$ to $(\ml_1,\ml_2)$ of Figure \ref{fig:pairlad}.}
		\label{fig:expsi}
	\end{figure}
	\begin{rmk}
		This defines a map $\Psi$ taking permissible pairs and producing multisegments. If $(\ml_1,\ml_2)$ is dominant it is not hard to see that our construction of $\Psi(\ml_1,\ml_2)$ coincides with the description of $\mathcal{K}^\prime$ in \cite[Proposition 2.4.]{GL21}  Our main point of departure from that article is dropping the dominance assumption on $(\ml_1,\ml_2)$.
		
		As the previous lemma shows, this construction cannot be used to compute $\soc(\ml_1,\ml_2)$ without the permissibility condition of $(\ml_1,\ml_2)$. It would be interesting to give a similar description, i.e. one that matches segments of $\ml_1$ and of $\ml_2$ and performs one set of union-intersections, of $\soc(\ml_1,\ml_2)$ for a general pair of ladders. 
	\end{rmk}	
		The way we constructed $\Psi$ in each region can be seen as a description of the inverse of Viennot's shadow construction of RSK with different orientations in each region. We explore this connection more thoroughly in Section \ref{sec:appl}.
		
	\section{Main Theorem}\label{sec:mainthm}
		In this section we prove our main result.
		\begin{thm}\label{thm:soc}
			Let $(\ml_1,\ml_2)$ be a permissible pair of ladders. Then $\Psi(\ml_1,\ml_2)=\soc(\ml_1,\ml_2)$. In other words
			\begin{align*}
				Z(\Psi(\ml_1,\ml_2))=\soc(Z(\ml_1)\times Z(\ml_2)).
			\end{align*}
		\end{thm}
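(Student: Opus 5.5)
We argue by induction on the total number of segments of $\ml_1+\ml_2$, or more precisely on the sum of lengths, using the recursion of Theorem \ref{thm:lmrec} with $\m=\ml_1$ and $\ml=\ml_2$. The plan is to show that $\Psi$ satisfies the same recursion as $\soc(\cdot,\cdot)$. Since that recursion determines $\soc(\m,\ml)$ uniquely (a multisegment is determined by $\m^\dag$ together with $\Delta^\circ(\m)$, because $\mathcal{MW}$ is injective), it suffices to verify two things. First, $\Psi$ satisfies the recursive identities in both cases a) and b). Second, the pairs appearing on the right-hand sides are again permissible pairs of ladders, which is needed for the induction. The base case, with one of the ladders empty, is trivial.

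\emph{Case a): $\min\ml_2\le\min\ml_1$.} Let $\Delta$ be the northernmost segment of $\ml_2$. No white point lies northwest of $\Delta$ except those in $(\ml_1)_{\le\Delta}$, and these lie in the same row or further south only if they are equal in begin. I will argue directly from the matching rules that $\Delta$ is unmatched in $M(\ml_1,\ml_2)$, and likewise every segment of $(\ml_1)_{\le\Delta}$ (which here consists of segments with $\b=\min\ml_2$ and smaller end).
\begin{itemize}
\item In the northwest part, a white point $\le\Delta$ would need a black point southeast of it. Since $\ml_2$ is a ladder starting at $\Delta$, there is one, but dominance forces it to be matched elsewhere in the order of matching. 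This requires checking with the aligned indexing.
\item In the southeast part, black points are matched to white points northwest of them.
\end{itemize}
This part is fiddly, so I would check that removing $\Delta$ and $(\ml_1)_{\le\Delta}$ leaves $\Gamma$ and all matchings unchanged. That gives $\Psi(\ml_1,\ml_2)=\Psi(\ml_1-(\ml_1)_{\le\Delta},\ml_2-\Delta)+(\ml_1)_{\le\Delta}+\Delta$. The remaining pair is still permissible because its matching is a subset of the old one, so Lemma \ref{lem:perm} applies.

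\emph{Case b): $\min\ml_1<\min\ml_2$.} This is the heart of the argument. I need to show two statements:
\begin{align*}
\Delta^\circ(\Psi(\ml_1,\ml_2))&=\Delta^\circ(\ml_1),\\
\Psi(\ml_1,\ml_2)^\dag&=\Psi(\ml_1^\dag,\ml_2).
\end{align*}
Let $\Delta_k,\dots,\Delta_{k-l}$ be the initial sequence of $\ml_1$, which consists of consecutive white points in consecutive rows. The strategy is to track how each union-intersection modifies these points. If $(\Delta_i,\Delta'_{p(i)})$ is matched, the resulting pair $\Delta_i\cup\Delta'+\Delta_i\cap\Delta'$ has beginnings $\b(\Delta_i)$ and $\b(\Delta')$. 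The segment starting in row $\b(\Delta_i)$ is then $[\b(\Delta_i),\e(\Delta')]$.

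I would show that the initial sequence of $\Psi(\ml_1,\ml_2)$ runs through exactly the rows $\b(\Delta_k),\dots,\b(\Delta_{k-l})$. In each of these rows it picks exactly the segment that replaces $\Delta_i$. Moving these segments down one row then corresponds to computing $\Psi$ on $(\ml_1^\dag,\ml_2)$ with the same matching $i\mapsto p(i)$.

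For the northwest (dominant) part, Lemma \ref{lem:dagpair} shows that $(\ml_1^\dag,\ml_2)$ restricted there stays dominant. I must also check that the greedy matching is unchanged under ${}^-$ on the initial sequence, and that permissibility survives; juxtaposed segments becoming empty is the delicate point. For the southeast part, I must check that $\Gamma(\ml_1^\dag,\ml_2)$ corresponds to $\Gamma(\ml_1,\ml_2)$, or to its shift if $\Gamma$ lies on the initial sequence, and that the dual matching is unaffected.

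\textbf{Main obstacle.} I expect the main obstacle to be proving that the MW initial sequence of $\Psi(\ml_1,\ml_2)$ is exactly the image of that of $\ml_1$. Other points of $\Psi$, coming from the intersections $\Delta\cap\Delta'$ and from unmatched black points, could a priori sit in the relevant rows with smaller end and hijack the sequence. My first attempt would be a minimality argument: assume such a competing point $P$ exists in row $\b(\Delta_{i})$, southeast of the previous chosen point, with smaller end. Then use the greedy choice in the matching (first unmatched black point southeast), together with the ladder property, to show that either $P$ would have been matched with $\Delta_i$ or $\Delta_i$ would not lie on the initial sequence, a contradiction. The same argument, dualised via $\vee$, should handle the southeast region, and the interaction across the boundary $\Gamma$ would then be handled separately.
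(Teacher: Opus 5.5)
Your overall plan matches the paper's: show that $\Psi$ satisfies both cases of Theorem \ref{thm:lmrec}, and that the reduced pairs remain permissible. The gap is in case b). Your plan assumes three things: $\Psi(\ml_1^\dag,\ml_2)$ is computed with the same matching $i\mapsto p(i)$; $\Gamma(\ml_1^\dag,\ml_2)$ is $\Gamma$ or its shift ${}^-\Gamma$; and the dual matching is unaffected. This can fail once the initial sequence runs into the southeast part.

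Take $\ml_1=[6,8]+[4,7]+[3,5]+[2,3]+[1,1]$ and $\ml_2=[8,9]+[4,8]+[3,4]+[2,2]$. This is a permissible pair with $\min\ml_1<\min\ml_2$.
\begin{itemize}
\item For $(\ml_1,\ml_2)$: $\Gamma=[3,5]$, the initial sequence of $\ml_1$ is $[1,1],[2,3],[3,5],[4,7]$, and $M(\ml_1,\ml_2)$ pairs $[4,8]$ with $[4,7]$ and leaves $[3,5]$ unmatched.
\item For $\ml_1^\dag=[6,8]+[5,7]+[4,5]+[3,3]$: the cut-off is $\Gamma(\ml_1^\dag,\ml_2)=[5,7]={}^-[4,7]$, not ${}^-\Gamma=[4,5]$. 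Moreover $M(\ml_1^\dag,\ml_2)$ pairs $[4,8]$ with $[4,5]={}^-[3,5]$ and leaves $[5,7]$ unmatched.
\end{itemize}
So the partner of $[4,8]$ moves by one position along $\ml_1$, and the matching is not transported by $\Delta_i\mapsto\Delta_i^\dag$. The same example breaks your description of the initial sequence of $\Psi$. Row $4$ of $\Psi(\ml_1,\ml_2)$ contains $[4,7]$ and $[4,8]$, and the initial sequence picks $[4,7]$, i.e.\ $\Delta_i$ itself rather than $[\b(\Delta_i),\e(\Delta'_{p(i)})]$.

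The missing ingredient is Lemma \ref{lem:rec2} a) together with the shift argument in the proof of the proposition after it.
\begin{itemize}
\item The new cut-off is ${}^-\Delta_\gamma$. Here $\Delta_\gamma$ is the northwesternmost segment of the initial sequence for which no segment of $\ml_2$ begins at $\b(\Delta_\gamma)+1$ and ends at or after $\e(\Delta_\gamma)$.
\item Between $\Gamma$ and $\Delta_\gamma$, let $S(\Delta)$ be the next segment of $\ml_1$ to the southeast. Then $(S(\Delta),\Delta')\in M(\ml_1,\ml_2)$ if and only if $({}^-\Delta,\Delta')\in M(\ml_1^\dag,\ml_2)$.
\item All of these pairs are nested, so union-intersection acts trivially on them. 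This is why $\Psi(\ml_1,\ml_2)^\dag=\Psi(\ml_1^\dag,\ml_2)$ holds even though the matchings differ.
\end{itemize}
This explicit description is also what you need to deduce permissibility of $(\ml_1^\dag,\ml_2)$ from Lemma \ref{lem:perm}, since you cannot reuse the old matching. For the initial sequence of $\Psi$ you need the paper's $\Delta_i^*$. It equals $\Delta_i\cup\Delta'_{p(i)}$ only when $\b(\Delta_i)<\b(\Delta'_{p(i)})$, and equals $\Delta_i$ otherwise.

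There is also a smaller slip in case a). When $(\ml_1)_{\le\Delta}\neq 0$, it is the single segment $\Delta_k$, and $(\Delta_k,\Delta)$ is a matched pair, not two unmatched segments. The identity survives because $\b(\Delta_k)=\b(\Delta)$ forces $\Delta_k\subseteq\Delta$. Hence the union-intersection is trivial, and $M$ simply loses this pair.
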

		Thus the map $\Psi$ gives a non-recursive way of computing socles of representations induced from two ladders satisfying the permissibility condition. It would be very interesting to find a description of $\soc(\ml_1,\ml_2)$ for all pairs $(\ml_1,\ml_2)$ that is non-recursive and works by performing a single set of union-intersections.
		
		We will prove the theorem by showing that the map $\Psi$ satisfies the recursion of Theorem \ref{thm:lmrec}. We do this in a series of lemmata, starting with the simpler case $a)$ of the recursion. Throughout this section $(\ml_1,\ml_2)$ denotes a permissible pair of ladders that is always indexed as $\ml_1=\Delta_1+\dots+\Delta_k$, $\ml_2=\Delta_1^\prime+\dots+\Delta_{k^\prime}^\prime$ in aligned form. Further, we set $\Gamma=\Gamma(\ml_1,\ml_2)$.
		\begin{lem}
			If $\min\ml_2\leq\min\ml_1$ then, with $\Delta^\prime=\Delta_{k^\prime}^\prime$ satisfying $\Delta^\prime=\min\ml_2$ and $\mathfrak{n}\coloneqq (\ml_1)_{\leq\Delta^\prime}$, we have
			\begin{enumerate}[a)]
				\item the pair $(\ml_1-\mathfrak{n},\ml_2-\Delta^\prime)$ is permissible, and
				\item $\Psi(\ml_1,\ml_2)=\Psi(\ml_1-\mathfrak{n},\ml_2-\Delta^\prime)+\Delta^\prime+\mathfrak{n}$.
			\end{enumerate}
		\end{lem}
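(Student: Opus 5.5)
The plan is to first pin down what $\mathfrak n$ and $\Delta'$ look like. Then show that removing them changes neither the cut-off $\Gamma$ in an essential way nor the matching on the remaining points. Since $\b(\Delta')=\min\ml_2\leq\min\ml_1\leq\b(\Delta)$ for every segment $\Delta$ of $\ml_1$, a segment $\Delta\leq\Delta'$ of $\ml_1$ must satisfy $\b(\Delta)=\b(\Delta')$. As $\ml_1$ is a ladder, $\mathfrak n$ is therefore either $0$ or the single segment $\Delta_k$, with $\b(\Delta_k)=\b(\Delta')$ and $\e(\Delta_k)\leq\e(\Delta')$. In the picture, $\Delta'$ is the northernmost black point, and the only white point northwest of it is $\Delta_k$ (when $\mathfrak n\neq0$).

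\textbf{Case $\mathfrak n=0$.} Here $\Delta'$ is southeast of no white point. It cannot be matched in the northwest part, where black points are only matched to white points northwest of them. It also cannot lie in the southeast part, since by Lemma \ref{lem:dualdom} every black point there is matched to a white point northwest of it. So $\Delta'$ is an unmatched point of the northwest part. I would then check two things.
\begin{itemize}
\item Deleting $\Delta'$ keeps the northwest part dominant: in aligned form $\Delta'=\Delta'_{k'}$ is the last black segment. Since it is unmatched, the greedy matching never needed it, so the inequalities $\Delta_i\leq\Delta'_i$ survive for the white segments present.
\item $\Gamma$ is unchanged: the defining condition only involves the dominance of $((\ml_1)_{\ngeq\Delta},(\ml_2)_{\ngeq\Delta})$, and removing an unused northernmost black point does not affect it for any $\Delta$ southeast of $\Gamma$.
\end{itemize}
Consequently $M(\ml_1,\ml_2)=M(\ml_1,\ml_2-\Delta')$. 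Permissibility of the smaller pair then follows from Lemma \ref{lem:perm}, since the matched pairs are identical, and part b) is immediate from the definition of $\Psi$.

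\textbf{Case $\mathfrak n=\Delta_k$.} The key observation is that union-intersection of $\Delta_k$ and $\Delta'$ is trivial, since $\Delta_k\subseteq\Delta'$ with the same beginning: $\Delta_k\cup\Delta'+\Delta_k\cap\Delta'=\Delta'+\Delta_k$. So it suffices to show the following: the matching $M(\ml_1,\ml_2)$ restricted to the remaining points equals $M(\ml_1-\Delta_k,\ml_2-\Delta')$, and $\Delta_k$, $\Delta'$ are each either unmatched or matched to one another. The point $\Delta'$ can only be matched to $\Delta_k$, by the argument above. Conversely, if $\Delta_k$ lies in the northwest part, it is processed first (it is the northernmost white point). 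It is then matched to the first black point southeast of it, and I expect to argue that this is $\Delta'$, the northernmost black point. If $\Delta_k$ lies in the southeast part, then $\Delta'$ lies there too, and the dual matching, processed from the south, eventually reaches $\Delta'$ and can only match it to $\Delta_k$. In either situation the removed pair is matched to itself, and the greedy procedure on all other points runs identically.

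\textbf{Main obstacle.} The hardest step is showing that $\Gamma(\ml_1-\mathfrak n,\ml_2-\Delta')$ induces the same northwest/southeast split on the remaining points, or at least that the two splits yield the same matching. The delicate sub-case is when $\Gamma=\Delta_k$ itself, or when the original pair is dominant but the reduced pair is not (or vice versa). For this I would compare the dominance conditions in Definition \ref{def:dominant} index by index before and after removal, using the ladder inequalities as in the proof of Lemma \ref{lem:dagpair}. If the split does change, I would verify directly that the affected points give the same matched pairs under both the northwest and the dual southeast rule. Once the matchings agree, permissibility (part a)) follows from Lemma \ref{lem:perm}, since all matched pairs of the reduced pair already occur in $M(\ml_1,\ml_2)$.
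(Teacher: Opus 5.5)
Your plan is the paper's: show $M(\ml_1-\mathfrak n,\ml_2-\Delta')=M(\ml_1,\ml_2)\setminus\{(\Delta_k,\Delta')\}$, then get a) from Lemma~\ref{lem:perm} and b) from $\Delta_k\subseteq\Delta'$. The paper just asserts this identity. Your handling of the cut-off when $\mathfrak n=0$ is sound: deleting the northernmost black point can only destroy dominance of a region, never create it. Also, at $\Gamma$ the northwest part has strictly more black than white points, since otherwise its northernmost white and black points would satisfy $\Delta_k\leq\Delta'$.

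The gap is in the case $\mathfrak n=\Delta_k$, which you leave open. Your fallback for ``$\Delta_k$ in the southeast part'' does not work as written. The dual matching treats the black points south of $\Delta'$ first, and one of them could take $\Delta_k$. So nothing guarantees that $\Delta'$ is matched to $\Delta_k$, or that the remaining matches are unchanged.

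The missing observation is that this sub-case never occurs.
\begin{itemize}
\item If $k=1$, the pair is dominant, since $\Delta_1\leq\Delta'\leq\Delta'_1$.
\item If $k\geq 2$, consider the region $\ngeq\Delta_{k-1}$. Its only white point is $\Delta_k$. It contains $\Delta'$, because $\b(\Delta')\leq\b(\Delta_k)<\b(\Delta_{k-1})$. Its southernmost black point is $\geq\Delta'\geq\Delta_k$. So this region is dominant, and hence $\Gamma\neq\Delta_k$.
\item In the non-dominant case, $\Delta'\geq\Gamma$ would force $\b(\Gamma)=\min\ml_1$, i.e.\ $\Gamma=\Delta_k$. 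So both removed points lie in the northwest part.
\end{itemize}

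Next, the cut-off is unchanged. Take any white $\Delta\neq\Delta_k$. Both $\Delta_k$ and $\Delta'$ lie in the region $\ngeq\Delta$ and are the last entries of the aligned forms there. Let $p\le q$ be the numbers of white and black points in the region, and $B_p$ the $p$-th black point from the south. Deleting $\Delta_k$ and $\Delta'$ drops only the inequality $\Delta_k\leq B_p$ from the dominance condition. That inequality holds automatically, since $B_p\geq\Delta'\geq\Delta_k$. The same reasoning applies to the whole pair. Hence dominance of every relevant region is unaffected, and $\Gamma(\ml_1-\Delta_k,\ml_2-\Delta')=\Gamma$.

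With the split fixed, the rest is immediate. $\Delta_k$ is processed first and takes $\Delta'$, the northwesternmost black point, since nothing is matched yet. The remaining greedy run is then literally the run for the reduced pair.
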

		\begin{proof}
			We compute the set of matched segments $M(\ml_1-\mathfrak{n},\ml_2-\Delta^\prime)$ from which both claims will follow quickly. 
			
			Note that either $\mathfrak{n}=0$ or $\mathfrak{n}=\Delta$, where $\Delta=\Delta_k$ is the segment of $\ml_1$ with $\b(\Delta)=\min\ml_1$. If $\mathfrak{n}=0$ then $\Delta^\prime$ does not appear in $M(\ml_1,\ml_2)$, since there is no segment $\Delta_i$ of $\ml_1$ with $\Delta_i\leq\Delta^\prime$ and we have $M(\ml_1,\ml_2-\Delta^\prime)=M(\ml_1,\ml_2)$. If $\mathfrak{n}=\Delta
			$, then $(\Delta,\Delta^\prime)\in M(\ml_1,\ml_2)$ and we have $M(\ml_1-\Delta,\ml_2-\Delta^\prime)\setminus\{(\Delta,\Delta^\prime)\}$, because $\Delta$ is the only segment of $\ml_1$ that is $\leq\Delta^\prime$. Thus in either case we have $M(\ml_1-\mathfrak{n},\ml_2-\Delta^\prime)=M(\ml_1,\ml_2)\setminus\{(\Delta,\Delta^\prime)\}$.
			\begin{enumerate}[a)]
				\item By the above, we have in particular $M(\ml_1-\mathfrak{n},\ml_2-\Delta^\prime)\subseteq M(\ml_1,\ml_2)$. Lemma \ref{lem:perm} then immediately shows that $(\ml_1-\mathfrak{n},\ml_2-\Delta^\prime)$ is permissible.
				\item The identity $M(\ml_1-\mathfrak{n},\ml_2-\Delta^\prime)=M(\ml_1,\ml_2)\setminus\{(\Delta,\Delta^\prime)\}$ together with the fact that, if $\mathfrak{n}=\Delta$ we have $\Delta\subseteq\Delta^\prime$ and so $\Delta\cup\Delta^\prime+\Delta\cap\Delta^\prime=\Delta+\Delta^\prime$, implies the claim on $\Psi$ immediately.
			\end{enumerate}
		\end{proof}
		To show that $\Psi$ also satisfies the second case of the recursion, we collect some small technical results on the interplay between $\Psi$ and $\mathcal{MW}$. In particular we need to understand how $\Gamma(\ml_1^\dag,\ml_2)$ relates to $\Gamma(\ml_1,\ml_2)$.
		\begin{lem}\label{lem:rec2}
			Assume that $\min\ml_1<\min\ml_2$ and let $\Delta_k,\dots,\Delta_{k-l}$ be the initial sequence of $\ml_1$.
			\begin{enumerate}[a)]
				\item If $\Delta_{k-l}<\Gamma$, which means that the initial sequence of $\ml_1$ is only in the northwest part, we have $\Gamma(\ml_1^\dag,\ml_2)=\Gamma$.
				
				Otherwise, so if the initial sequence of $\ml_1$ extends into the southeast part, let $\Delta_\gamma$ be the nw-most segment in the initial sequence of $\ml_1$ such that there is no segment $\Delta^\prime$ of $\ml_2$ with $\b(\Delta^\prime)=\b(\Delta_\gamma)+1$ and $\e(\Delta^\prime)\geq\e(\Delta_\gamma)$. Then $\Gamma(\ml_1^\dag,\ml_2)={}^-\Delta_\gamma$.
				
				\item The pair $(\ml_1^\dag,\ml_2)$ is permissible.
				
				\item For each $i=k,\dots,k-l$ set
					\begin{align*}
						\Delta_i^*\coloneqq\begin{cases} \Delta_i\cup\Delta_j^\prime&\text{if }(\Delta_i,\Delta_j^\prime)\in M(\ml_1,\ml_2)\text{ and } b(\Delta_i)<b(\Delta_j^\prime)\\ \Delta_i &\text{else}\end{cases}.
					\end{align*}
					Then $\Delta_k^*,\dots,\Delta_{k-l}^*$ is the initial sequence of $\Psi(\ml_1,\ml_2)$.
			\end{enumerate}
		\end{lem}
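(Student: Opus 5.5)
We argue directly from the definitions of $\Gamma$, of the matching $M$ and of the initial sequence, treating the three parts in order, with a) feeding into b) and both feeding into c). The basic observation is that the initial sequence $\Delta_k,\dots,\Delta_{k-l}$ of the ladder $\ml_1$ is a run of consecutive segments with $\b(\Delta_{i-1})=\b(\Delta_i)+1$. Passing to $\ml_1^\dag$ moves each of these points one row down (deleting $\Delta_k$ if it becomes empty) and leaves everything else fixed. In particular $\ml_1^\dag$ is again a ladder, unless two segments coincide, which is impossible since $\ml_1$ is a ladder.

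For a), we compare the dominance condition defining $\Gamma$ before and after the move. In the northwest case $\Delta_{k-l}<\Gamma$, the proof of Lemma~\ref{lem:dagpair} applies verbatim to the northwest part and shows that it stays dominant. The southeast part is untouched except possibly through the condition $\ngeq\Gamma$, which does not change. Maximality of $\Gamma$ can fail only if a segment southeast of $\Gamma$ becomes admissible. That segment's northwest part is unchanged, so this cannot happen, and therefore $\Gamma(\ml_1^\dag,\ml_2)=\Gamma$.

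In the second case we run the induction of Lemma~\ref{lem:dagpair} along the initial sequence into the southeast part. The inequality $\b(\Delta_i)<\b(\Delta_i^\prime)$ needed for $\Delta_i^\dag\le\Delta_i^\prime$ survives exactly as long as the next row contains a black point weakly east of $\Delta_i$. By definition $\Delta_\gamma$ is the first place where this fails. We then show that $((\ml_1^\dag)_{\ngeq{}^-\Delta_\gamma},(\ml_2)_{\ngeq{}^-\Delta_\gamma})$ is dominant, using Lemma~\ref{lem:dualdom} to control the black points in the southeast part. We also show that any segment of $\ml_1^\dag$ further southeast violates dominance, again by a counting argument with Lemma~\ref{lem:dualdom}. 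This index bookkeeping, comparing aligned indices of the white and black points around $\Delta_\gamma$, is the main obstacle. We expect to handle it by a case split on whether $\Delta_\gamma$ lies before or after $\Gamma$ in the sequence.

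For b), we use Lemma~\ref{lem:perm}. It suffices to compute $M(\ml_1^\dag,\ml_2)$ from $M(\ml_1,\ml_2)$ and to check that every matched pair still has a union that is a segment. The expected description of $M(\ml_1^\dag,\ml_2)$ is that matched pairs keep their partners, with $\Delta_i$ replaced by $\Delta_i^\dag$, apart from a possible shift near the new cut ${}^-\Delta_\gamma$. Moving a white point down one row only decreases $\b$, and $\e$ is unchanged. Hence $\Delta_i^\dag\cup\Delta_j^\prime\supseteq$ a segment containing $\Delta_i\cup\Delta_j^\prime$ minus its first point, so it is again a segment whenever it is nonempty. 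The pairs whose partners change are those crossing the cut, and for these we verify the union condition using the consecutive-row structure of the initial sequence.

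For c), we read off $\Psi(\ml_1,\ml_2)$ near rows $\b(\Delta_k),\dots,\b(\Delta_{k-l})$ and check the MW rule step by step. The first term $\Delta_k^*$ has $\b=\min\ml_1$, which is the minimum of $\Psi$, and it is the one with minimal end. This holds because union-intersection with a black point in a strictly lower row replaces the point in row $\b(\Delta_k)$ by $\Delta_k\cup\Delta_j^\prime$, and no other point of $\Psi$ lies in that row. Inductively, we show that $\Delta_{i-1}^*$ is the point of $\Psi$ in row $\b(\Delta_i)+1$ that lies strictly east of $\Delta_i^*$ and is westernmost among such points. The candidates are images of white and black points in that row and intersections $\Delta\cap\Delta^\prime$. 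Here we compare ends using the matching rule, namely the first unmatched black point southeast in the northwest part and its dual in the southeast part. We also have to check that the sequence stops exactly at $\Delta_{k-l}^*$.
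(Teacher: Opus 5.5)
\textbf{The plan follows the paper's route, but part a), on which b) depends, is not proved.} The architecture matches the paper's proof: Lemma~\ref{lem:dagpair} northwest of $\Gamma$, the segment $\Delta_\gamma$ together with Lemma~\ref{lem:dualdom} southeast of it, Lemma~\ref{lem:perm} for b), and a row-by-row check of the MW rule for c).

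In the northwest case, your sentence ``that segment's northwest part is unchanged'' is false. For a white $\Delta>\Gamma$, the pair $((\ml_1^\dag)_{\ngeq\Delta},(\ml_2)_{\ngeq\Delta})$ contains the whole moved initial sequence, and $\Delta_k$ is deleted if $\Delta_k=[a,a]$. Moving whites one row down can only destroy inequalities $\Delta_i\le\Delta_i'$. But deleting the northwestern-most white shortens the white ladder and could a priori create dominance, so an argument is needed. One that works: let $A,B$ be the whites and blacks northwest of $\Delta$ in aligned form, with $\Gamma=A_g$. Since $a<\min\ml_2$, $[a,a]$ is northwest of every black, so the old failure of dominance can only be $|A|=|B|+1$. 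Dominance after the deletion would give $\Gamma\le A_i\le B_i$ for $i\le g$. Hence at most $|B|-g<|A|-g$ blacks are $\ngeq\Gamma$, contradicting dominance of the northwest part of $\Gamma$.

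\textbf{Second case of a).} Here you defer exactly the content of the lemma, and your heuristic is inaccurate. Northwest of $\Gamma$, Lemma~\ref{lem:dagpair} gives $\b(\Delta_i)<\b(\Delta_i')$ whether or not row $\b(\Delta_i)+1$ contains a black. So the ``next row'' criterion only matters weakly southeast of $\Gamma$, and $\Delta_\gamma$ must be the first initial-sequence segment with $\Delta_\gamma\geq\Gamma$ lacking such a black. The paper's proof uses this restriction tacitly.

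Your planned case ``$\Delta_\gamma$ before $\Gamma$'' cannot be proved: Lemma~\ref{lem:dagpair} forces $\Gamma(\ml_1^\dag,\ml_2)\ge\Gamma$, while ${}^-\Delta_\gamma\ngeq\Gamma$. Concretely, take $\ml_1=[3,8]+[2,6]+[1,4]$ and $\ml_2=[4,7]+[3,5]$, which is permissible with $\Gamma=[3,8]$. The literal reading gives $\Delta_\gamma=[1,4]$, but in fact $\Gamma(\ml_1^\dag,\ml_2)=[4,8]$.

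What actually has to be shown is the following.
\begin{enumerate}
\item[(i)] Dominance northwest of ${}^-\Delta_\gamma$. Pair each ${}^-\Delta_i$ with $\Gamma\le\Delta_i<\Delta_\gamma$ with the black in row $\b(\Delta_i)+1$ supplied by the choice of $\gamma$. Use Lemma~\ref{lem:dagpair} for the whites northwest of $\Gamma$. Use Lemma~\ref{lem:dualdom} to see that these blacks are the only blacks $\geq\Gamma$ that are $\ngeq{}^-\Delta_\gamma$.
\item[(ii)] Maximality. Let $Y_1,Y_2,\dots$ be the blacks $\geq{}^-\Delta_\gamma$, ordered from northwest to southeast. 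They lie in rows $\geq\b(\Delta_\gamma)+2$, and Lemma~\ref{lem:dualdom} gives $Y_j\geq\Delta_{\gamma-j}$. So for a white $W$ of $\ml_1^\dag$ lying $t$ steps southeast of ${}^-\Delta_\gamma$, at most $Y_1,\dots,Y_{t-1}$ are $\ngeq W$. That is fewer than the $t$ whites from ${}^-\Delta_\gamma$ up to (not including) $W$, so dominance northwest of $W$ fails.
\end{enumerate}

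\textbf{Part b).} Your expected description of $M(\ml_1^\dag,\ml_2)$ is also off: partners can change far from the new cut. Take $\ml_1=[5,6]+[4,5]+[3,4]+[2,3]+[1,2]$ and $\ml_2=[5,7]$. The black $[5,7]$ is matched with $[5,6]$ in $M(\ml_1,\ml_2)$ but with ${}^-[4,5]$ in $M(\ml_1^\dag,\ml_2)$, while the new cut is ${}^-[1,2]$. The union condition survives because the re-matched pair has equal beginnings, and that is what your check of b) has to capture.
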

		\begin{proof}
			\begin{enumerate}[a)]
				\item Lemma \ref{lem:dagpair} immediately shows that $\Gamma(\ml_1^\dag,\ml_2)\geq \Gamma$. So to compute $\Gamma(\ml_1^\dag,\ml_2)$ we only need to understand what happens in the region that is southeast of $\Gamma$ when replacing $\ml_1$ by $\ml_1^\dag$. If $\Gamma$ is not in the initial sequence of $\ml_1$ then it is clear that $\Gamma(\ml_1^\dag,\ml_2)=\Gamma$ since nothing changes in the southeast region.  If $\Gamma\leq\Delta_{k-l}$, so $\Gamma$ appears in the initial sequence of $\ml_1$, then replacing $\Delta$ by ${}^-\Delta$ in the southeast region can make a larger nw part of $(\ml_1,\ml_2)$ dominant (see Figure \ref{fig:dagdom} below for an illustration of the following argument).  With notation as in the statement of the lemma, it is easy to see that $(\ml_1^\dag,\ml_2)$ is dominant northwest of ${}^-\Delta_\gamma$. We need to argue that ${}^-\Delta_\gamma$ is the se-most segment of $\ml_1^\dag$ with this property, so that ${}^-\Delta_\gamma=\Gamma(\ml_1^\dag,\ml_2)$.
				
				\begin{figure}[ht]
					\centering
					\begin{tikzpicture}[x=1.4em,y=1.4em, thick,color = black]
						\draw[step=1,black,thin] (0,0) grid (9,9);
						\draw [black] (0.5,8.5) circle (4pt);
						\draw [black] (2.5,7.5) circle (4pt);
    						\draw [black] (4.5,6.5) circle (4pt);
    						\draw [orange] (4,-0.5) -- (4,7) -- (9.5,7);
    						\draw [black] (6.5,5.5) circle (4pt);
    						\draw [black] (7.5,3.5) circle (4pt);
    						
    						\fill [black] (1.5,7.5) circle (4pt);
    						\fill [black] (3.5,6.5) circle (4pt);
    						\fill [black] (7.5,5.5) circle (4pt);
    						\fill [black] (8.5,1.5) circle (4pt);
    						
    						\draw [red] (0.5,7.5) circle (4pt);
						\draw [red] (2.5,6.5) circle (4pt);
    						\draw [red] (4.5,5.5) circle (4pt);
    						\draw [red] (6.5,4.5) circle (4pt);
    						\draw [red] (6,-0.5) -- (6,5) -- (9.5,5);
					\end{tikzpicture}
					\caption{An illustration of ${}^-\Delta_\gamma=\Gamma(\ml_1^\dag,\ml_2)$, with $\ml_1^\dag$ in red and the cut-off ${}^-\Delta_\gamma$ indicated by a red line.}\label{fig:dagdom}
				\end{figure}
				
				For this it is enough to show that there is no $\Delta^\prime$ such that ${}^-\Delta_\gamma\leq\Delta^\prime$ and $\Delta_{\gamma-1}^\dag\nleq\Delta^\prime$, where $\Delta_{\gamma-1}^\dag={}^-\Delta_{\gamma-1}$ if $\gamma-1\geq k-l$ and $\Delta_{\gamma-1}$ otherwise. Assume for contradiction that such a $\Delta^\prime$ exists. If $\gamma> k-l$ then the conditions on $\Delta^\prime$ precisely contradict the definition of $\Delta_\gamma$. Thus we have $\gamma=k-l$, so $\Delta_{\gamma-1}^\dag=\Delta_{\gamma-1}$ and the definition of $\Delta_\gamma$ and $\Delta^\prime$ show that $(\ml_1^\dag,\ml_2)$ is dominant in the region nw of $\Delta_{\gamma-1}$. But then the definition of $\Delta_\gamma$ implies that also $(\ml_1,\ml_2)$ is dominant in this region, contradicting the definition of $\Gamma$. This finishes the proof of a).
				
				\item By an argument like in the proof of Lemma \ref{lem:dagpair} it is easy to see that if $\Delta_i<\Gamma$ we have $(\Delta_i^\dag,\Delta_j)\in M(\ml_1^\dag,\ml_2)$ if and only if $(\Delta_i,\Delta_j)\in M(\ml_1,\ml_2)$. The explicit description of the region that is southeast of $\Gamma$ in a) then quickly shows that for any pair $(\Delta^\dag,\Delta^\prime)\in M(\ml_1^\dag,\ml_2)$ with $\Delta\geq\Gamma$ also $\Delta^\dag\cup\Delta^\prime$ is a segment: between $\Gamma$ and $\Delta_\gamma$ this follows by the explicit construction in a) and southeast of $\Delta_\gamma$ this follows because the matchings will be the same as in $(\ml_1,\ml_2)$. Altogether this shows, via Lemma \ref{lem:perm}, that $(\ml_1^\dag,\ml_2)$ is permissible.
				
				\item By construction we have $\b(\Delta_1^*)=\min\ml_1=\min\Psi(\ml_1,\ml_2)$ and $\b(\Delta_i^*)=\b(\Delta_i)$ for $i=k,\dots,k-l$ and so $\b(\Delta_{i-1}^*)=\b(\Delta_i^*)+1$ for $i=k,\dots,k-l+1$. The proof of Lemma \ref{lem:dagpair} shows that $\b(\Delta_i)<\b(\Delta_{j_i}^\prime)$ for $\Delta_i<\Gamma$ and $j_i$ such that $(\Delta_i,\Delta_{j_i}^\prime)\in M(\ml_1,\ml_2)$. By definition, this means $\Delta_i^*=\Delta_i\cup\Delta_{j_i}^\prime$ for all $i=k,\dots,k-l$ with $\Delta_i<\Gamma$. By construction of $M(\ml_1,\ml_2)$, we then have $\e(\Delta_i^*)=\e(\Delta_{j_i}^\prime)<\e(\Delta_{j_{i-1}}^\prime)=\e(\Delta_{i-1}^*)$ in the nw-region, the minimality of $\e(\Delta_i^*)$ follows immediately from the matching rules defining $\Psi(\ml_1,\ml_2)$. This shows all the claims in the nw-region, if $\Delta_{k-l}\leq\Gamma$ we are done.
				
				 Now assume that $\Delta_{k-l}>\Gamma$. The only case in which $\e(\Delta_{i-1}^*)>\e(\Delta_i^*)$ is not immediately obvious is if $\Delta_i^*=\Delta_i\cup\Delta_j^\prime$ for $(\Delta_i,\Delta_j^\prime)\in M(\ml_1,\ml_2)$ but $\Delta_{i-1}^*=\Delta_{i-1}$. If, for contradiction, we assumed $\e(\Delta_i^*)=\e(\Delta_j^\prime)\geq \e(\Delta_{i-1})$, then $\Delta_j^\prime$ would have been matched with $\Delta_{i-1}$ in the construction of $M(\ml_1,\ml_2)$. Indeed, since $\Delta_i,\Delta_{i-1}$ are in the initial sequence of $\ml_1$ and $\b(\Delta_j^\prime)>\b(\Delta_i)$ by construction, we would have $\b(\Delta_j^\prime)\geq\b(\Delta_i)+1=\b(\Delta_{i-1})$ in addition to $\e(\Delta_j^\prime)\geq \e(\Delta_{i-1})$. This is a contradiction, so $\e(\Delta_{i-1}^*)>\e(\Delta_i^*)$ holds also in this case. This shows that $\e(\Delta_{k-i}^*)$ is strictly increasing also in the se-region. Appealing to the matching rules for the construction of $\Psi(\ml_1,\ml_2)$ shows the minimality of $\e(\Delta_{k-i}^*)$ for $i=1,\dots,l$.
				 
				  To show that $\Delta_k^*,\dots,\Delta_{k-l}^*$ is the initial sequence of $\Psi(\ml_1,\ml_2)$ we are thus only left to show that it cannot be extended, i.e. that there is no segment $\Delta^*$ in $\Psi(\ml_1,\ml_2)$ such that $\b(\Delta^*)=\b(\Delta_{k-l}^*)+1$ and $\e(\Delta^*)>\e(\Delta_{k-l}^*)$. If $\Delta^*$ were a segment $\Delta$ of $\ml_1$ or $\Delta\cup\Delta^\prime$ with $(\Delta,\Delta^\prime)\in M(\ml_1,\ml_2)$, then $\Delta$ would extend the initial sequence of $\ml_1$. If it were of the form $\Delta\cap\Delta^\prime$ with $(\Delta,\Delta^\prime)\in M(\ml_1,\ml_2)$, then we would have $\e(\Delta^*)\leq\e(\Delta_{k-l}^*)$. Since all segments of $\ml_2$ in the se-region are matched in $M(\ml_1,\ml_2)$ this covers all cases and this finishes the proof of b).
			\end{enumerate}
		\end{proof}
		The previous lemma gives us all the understanding we need of the relationship between $\Psi$ and $\mathcal{MW}$ to show that $\Psi$ also obeys the second case of the recursion in Theorem \ref{thm:lmrec}.
		\begin{prop}
			Assume that $\min\ml_1<\min\ml_2$. Then
			\begin{align*}
				\Psi(\ml_1,\ml_2)^\dag&=\Psi(\ml_1^\dag,\ml_2),\text{ and} \\
				\Delta^\circ(\Psi(\ml_1,\ml_2))&=\Delta^\circ(\ml_1).
			\end{align*}
			In other words, we have 
			\begin{align*}
				\mathcal{MW}(\Psi(\ml_1,\ml_2))=(\Psi(\ml_1^\dag,\ml_2),\Delta^\circ(\ml_1)).
			\end{align*}
			So $\Psi$ satisfies the recursion of b) in Theorem \ref{thm:lmrec}.
		\end{prop}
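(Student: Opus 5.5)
The plan is to derive both identities from Lemma \ref{lem:rec2}, which already identifies the initial sequence of $\Psi(\ml_1,\ml_2)$, $\Gamma(\ml_1^\dag,\ml_2)$, and the permissibility of $(\ml_1^\dag,\ml_2)$ (so that $\Psi(\ml_1^\dag,\ml_2)$ is defined).

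The second identity comes first and is immediate. By Lemma \ref{lem:rec2} c), the initial sequence of $\Psi(\ml_1,\ml_2)$ is $\Delta_k^*,\dots,\Delta_{k-l}^*$, and $\b(\Delta_i^*)=\b(\Delta_i)$ for each $i$. Therefore
\[
\Delta^\circ(\Psi(\ml_1,\ml_2))=[\b(\Delta_k),\b(\Delta_{k-l})]=\Delta^\circ(\ml_1).
\]

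For the first identity, $\Psi(\ml_1,\ml_2)^\dag$ is obtained from $\Psi(\ml_1,\ml_2)$ by replacing each $\Delta_i^*$ with ${}^-\Delta_i^*$. I would compare this with $\Psi(\ml_1^\dag,\ml_2)$ by computing the matching $M(\ml_1^\dag,\ml_2)$ in terms of $M(\ml_1,\ml_2)$. The claim to establish is that $(\Delta_i^\dag,\Delta_j')\in M(\ml_1^\dag,\ml_2)$ exactly when $(\Delta_i,\Delta_j')\in M(\ml_1,\ml_2)$ and $\b(\Delta_i)<\b(\Delta_j')$ (or $i$ lies outside the initial sequence). Pairs with $\b(\Delta_i)=\b(\Delta_j')$ and $i$ in the initial sequence become unmatched, since ${}^-\Delta_i$ is no longer northwest of $\Delta_j'$.

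Assuming the claim, I check the outcome segment by segment.
\begin{itemize}
\item If $(\Delta_i,\Delta_j')$ is matched with $\b(\Delta_i)<\b(\Delta_j')$, then $({}^-\Delta_i)\cup\Delta_j'=({}^-(\Delta_i\cup\Delta_j'))={}^-\Delta_i^*$ and $({}^-\Delta_i)\cap\Delta_j'=\Delta_i\cap\Delta_j'$. Only the union part shifts, exactly as $\dag$ prescribes.
\item If $\b(\Delta_i)=\b(\Delta_j')$, the union-intersection in $\Psi(\ml_1,\ml_2)$ gives $\Delta_i^*=\Delta_i$ together with $\Delta_j'$. In $\Psi(\ml_1^\dag,\ml_2)$ we instead get ${}^-\Delta_i$ and $\Delta_j'$ unmatched, which again agrees.
\item Unmatched initial segments simply shift.
\end{itemize}
Summing these cases gives the desired equality of multisegments.

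The main obstacle is proving the matching claim, in particular that the greedy matchings do not reshuffle. This has to be done separately in the two regions.
\begin{itemize}
\item \textbf{Northwest region.} The proof of Lemma \ref{lem:dagpair} gives strict inequality $\b(\Delta_i)<\b(\Delta'_{j})$ there, so the matching is unchanged; this is the observation already used in part b) of Lemma \ref{lem:rec2}.
\item \textbf{Southeast region.} Here I would use the explicit description $\Gamma(\ml_1^\dag,\ml_2)={}^-\Delta_\gamma$ from Lemma \ref{lem:rec2} a).
  \begin{itemize}
  \item Between $\Gamma$ and $\Delta_\gamma$, the segments that switch from the dual (southeast) matching to the dominant (northwest) greedy matching are exactly those whose partners satisfy strict inequality of beginnings; here the definition of $\gamma$ (no $\Delta'$ with $\b=\b(\Delta_\gamma)+1$, $\e\ge\e(\Delta_\gamma)$) is used.
  \item Southeast of $\Delta_\gamma$, the dual matching is untouched. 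The one point to verify is that a black point previously matched with some $\Delta_i$ of equal beginning does not now grab a different white point; the minimality in the initial sequence rules this out.
  \end{itemize}
\end{itemize}
I expect this careful case analysis near the cut-off to be the bulk of the work.
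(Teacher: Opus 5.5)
\textbf{Verdict: genuine gap (the conclusion is right, but the key matching claim is false).} Your proof of $\Delta^\circ(\Psi(\ml_1,\ml_2))=\Delta^\circ(\ml_1)$ via Lemma~\ref{lem:rec2}~c) is the paper's argument. So is your overall plan: compare $M(\ml_1^\dag,\ml_2)$ with $M(\ml_1,\ml_2)$ region by region, using $\Gamma(\ml_1^\dag,\ml_2)={}^-\Delta_\gamma$ and $({}^-\Delta)\cup\Delta'={}^-(\Delta\cup\Delta')$.

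The gap is the matching claim itself, which fails in the band $\Gamma\le\Delta\le\Delta_\gamma$. Take the permissible pair $\ml_1=[3,5]+[2,4]+[1,2]$, $\ml_2=[3,6]+[2,3]$. Then:
\begin{itemize}
\item $\Gamma=[2,4]$ and $M(\ml_1,\ml_2)=\{([1,2],[2,3]),([3,5],[3,6])\}$.
\item The initial sequence of $\ml_1$ is $[1,2],[2,4],[3,5]$, with $\Delta_\gamma=[3,5]$.
\item $\ml_1^\dag=[4,5]+[3,4]+[2,2]$, and $\Gamma(\ml_1^\dag,\ml_2)=[4,5]$.
\end{itemize}
Now $[3,4]$ and $[3,6]$ both lie in the northwest part, so the northwest greedy rule gives $M(\ml_1^\dag,\ml_2)=\{([2,2],[2,3]),([3,4],[3,6])\}$. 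The black segment $[3,6]$, whose old partner $[3,5]$ had the same beginning, is therefore not left unmatched. It is re-matched to $[3,4]=[2,4]^\dag$, even though $([2,4],[3,6])\notin M(\ml_1,\ml_2)$.

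This contradicts three things in your proposal: the ``exactly when'' statement, your second bullet (which has $\Delta_j'$ unmatched), and your account of the region between $\Gamma$ and $\Delta_\gamma$. In particular, a black segment previously matched with equal beginning does grab a different white segment. The cause is exactly that this band switches from the dual rule to the northwest greedy rule; minimality in the initial sequence does not prevent it.

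What actually happens is a shift by one, which is how the paper describes $M(\ml_1^\dag,\ml_2)$ using the map $S$ (next segment of $\ml_1$ to the southeast). Take $\Gamma\le\Delta<\Delta_\gamma$, and let $\Delta'$ be the witness in the definition of $\gamma$. Then:
\begin{itemize}
\item $\b(\Delta')=\b(S(\Delta))$.
\item By maximality of $\Gamma$, $\Delta'\ge S(\Delta)$; otherwise the part northwest of $S(\Delta)$ would already be dominant.
\item $\Delta'$ is matched with $S(\Delta)$ in $M(\ml_1,\ml_2)$ and with ${}^-\Delta$ in $M(\ml_1^\dag,\ml_2)$.
\end{itemize}
Outside this band the matching is unchanged.

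Your final segment-by-segment count still comes out right, but only because both $(S(\Delta),\Delta')$ and $({}^-\Delta,\Delta')$ are nested pairs. Each therefore contributes just its two segments to $\Psi$. To close the gap you must prove this shifted description, rather than the claim as you stated it. At minimum, you need that every black segment losing an equal-beginning partner is either unmatched or matched only to a segment it contains.
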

		\begin{proof}
			Part c) of the previous lemma immediately shows $\Delta^\circ(\Psi(\ml_1,\ml_2))=\Delta^\circ(\ml_1)$. We use a) and b) of the previous lemma to explicitly describe $M(\ml_1^\dag,\ml_2)$ from which the claim about $\Psi(\ml_1^\dag,\ml_2)$ will follow.
		
			We recall our notational conventions: we have aligned forms $\ml_1=\Delta_1+\dots+\Delta_k$, $\ml_2=\Delta_1^\prime+\dots+\Delta_{k^\prime}^\prime$ and $\Gamma(\ml_1,\ml_2)=\Gamma$. We retain also the conventions of the previous lemma: $\Delta_k,\dots,\Delta_{k-l}$ is the initial sequence of $\ml_1$, $\Delta_k^*,\dots,\Delta_{k-l}^*$ is the initial sequence of $\Psi(\ml_1,\ml_2)$ and $\Gamma(\ml_1^\dag,\ml_2)={}^-\Delta_\gamma$.

			If $\Delta<\Gamma$ or $\Delta>\Delta_\gamma$ the proofs of a) and c) of the previous lemma show that $(\Delta,\Delta^\prime)\in M(\ml_1,\ml_2)$ if and only if $(\Delta^\dag,\Delta^\prime)\in M(\ml_1^\dag,\ml_2)$. 
			
			For $\Gamma\leq\Delta\leq\Delta_\gamma$ we use the description of $\Delta_\gamma$ from b) of the previous lemma. See also Figure \ref{fig:dagdom} above for a picture of the situation. In this region the matching will shift by $1$ when going from $\ml_1$ to $\ml_1^\dag$. For notational simplicity we denote by $S$ the map that sends a segment $\Delta$ of a ladder $\ml$ to the segment immediately SouthEast of $\Delta$ in $\ml$. With this, for any segment $\Delta$ of $\ml_1$ with $\Gamma\leq\Delta\leq\Delta_\gamma$ we have $(S(\Delta),\Delta^\prime)\in M(\ml_1,\ml_2)$ if and only if $({}^-\Delta,\Delta^\prime)\in M(\ml_1^\dag,\ml_2)$, which is immediately clear from the proof of a) of the previous lemma and the explicit description of $\Delta_\gamma=\Gamma(\ml_1^\dag,\ml_2)$ in the previous lemma.
			
			Now if $({}^-\Delta,\Delta^\prime)\in M(\ml_1^\dag,\ml_2)$ with $\Gamma\leq\Delta\leq\Delta_\gamma$ then $(S(\Delta),\Delta^\prime)\in M(\ml_1,\ml_2)$ and both pairs are unlinked, i.e. one segment contains the other, so that in $\Psi(\ml_1^\dag,\ml_2)$ resp. $\Psi(\ml_1,\ml_2)$ they contribute ${}^-\Delta+\Delta^\prime$ resp. $\Delta+\Delta^\prime$. Furthermore, noting that for any $\Delta^*=\Delta\cup\Delta^\prime$ in the initial sequence of $\Psi(\ml_1,\ml_2)$ we have ${}^-(\Delta^*)=({}^-\Delta)\cup\Delta^\prime$, this shows in total that
			\begin{align*}
				\Psi(\ml_1,\ml_2)^\dag=\Psi(\ml_1^\dag,\ml_2).
			\end{align*}
		\end{proof}
		This completes the proof of Theorem \ref{thm:soc}.
	\section{Relation to RSK}\label{sec:appl}
	The goal of this section is to put the constructions of the map $\Psi$ into a more classical combinatorial context. Concretely, we will use Viennot's shadow construction of the RSK-correspondence to describe the inverse of $\Psi$ (or more properly to describe pre-image $\Psi^{-1}(\m)$ for a given multisegment $m$). Analogously to the way we constructed $\Psi$, we will do this by applying the shadow construction with different orientations.
	
	The constructions of this section were sparked by a conjecture communicated to us by Erez Lapid and were the motivation and inspiration for constructing the map $\Psi$. We describe first the setting that we will work in for the remainder of this article and then state the main result of this section.
	
	\begin{defi}
		A pair $(\ml_1,\ml_2)$ of ladders is called \textit{regular}, if there are no multiplicities among the $\b(\Delta)$'s and no multiplicities among the $\e(\Delta)$'s for $\Delta$ in $\ml_1+\ml_2$ and $\b(\Delta)\leq\e(\Delta^\prime)$ for all $\Delta,\Delta^\prime$ in $\ml_1+\ml_2$.
	\end{defi}
	Regular pairs are often simpler to work with from a combinatorial perspective, because in this case one can always perform union-intersection if $\Delta\leq\Delta^\prime$ for two segments. In \cite{Gurevich2020} the problem of characterising $\soc(Z(\ml_1)\times Z(\ml_2))$ (and indeed all other irreducible subquotients) was investigated from the point of view of quantum groups and in \cite[Theorem 6.11.]{Gurevich2020} a similar construction to ours was given in somewhat different terms for regular pairs.
		\begin{rmk}
			The question that sparked the work on this article was Theorem \ref{thm:n+1} which was communicated to us as a conjecture by Erez Lapid and which only refers to regular pairs $(\ml_1,\ml_2)$. After (conjectorially) finding the combinatorial construction $\Psi$ for $\soc(\ml_1,\ml_2)$ in the regular setting a technical problem is that the Lapid--Minguez recursion of Theorem \ref{thm:lmrec} doesn't preserve regularity and so one has to extend this description to a class of pairs that is stable under this recursion. As Lemma \ref{lem:perm} and Lemma \ref{lem:rec2} b) show, permissibility is precisely the right notion for this.
		\end{rmk}
	 
		When considering regular pairs it is no serious loss of generality to restrict to the setting where $\{\b(\Delta)\colon\Delta\text{ in }\ml_1+\ml_2\}=\{1,\dots,n\}=[n]$ and $\{\e(\Delta)\colon\Delta\text{ in }\ml_1+\ml_2\}=\{n,\dots,2n-1\}$. The pair $(\ml_1,\ml_2)$ is then determined by the sets $I=\{\b(\Delta)\colon\Delta\text{ in }\ml_1\}$ and $J=\{\e(\Delta)-n+1\colon\Delta\text{ in }\ml_1\}$, which are subsets of equal size of $[n]$. More precisely, given $I,J\subseteq[n]$ of equal size, we can define a ladder $\ml_{I,J}$ by setting
		\begin{align*}
			\ml_{I,J}=\sum_{i=1}^k[a_i,b_i+n-1],
		\end{align*}
		where $I=\{a_1>\dots>a_k\}$ and $J=\{b_1>\dots>b_k\}$. Denoting by $(\cdot)^c$ the complement in $[n]$, this gives a regular pair $(\m_{I,J},\m_{I^c.J^c})$ of ladders for any pair $(I,J)$ of subsets of $[n]$ of equal size and it is easy to see that any regular pair of ladders with $\b(\ml_1+\ml_2)=[n]$ and $\e(\ml_1+\ml_2)=\{n,\dots,2n-1\}$ is of this form.
		
		For convenience we set $X_n\coloneqq\{(I,J)\colon I,J\subseteq[n],\,|I|=|J|\}$ and identify elements of this set with pairs of ladders as described above.
		
		In similar fashion, if $\sigma\in S_n$ is a permutation we obtain a multisegment $\m_\sigma$ by setting
		\begin{align*}
			\m_\sigma=\sum_{i=1}^n[i,\sigma(i)+n-1].
		\end{align*}
		We think of $\m_\sigma$ essentially as the permutation matrix of $\sigma$, the shift by $n-1$ in the second coordinate is simply to ensure that these are really segments.
		\begin{defi}
			We define a map $\psi:X_n\rightarrow S_n$ as follows. Given $(I,J)\in X_n$, let $\psi(I,J)$ be the unique $321$-avoiding permutation $\sigma$ such that
			\begin{align*}
				\soc(Z(\ml_{I,J})\times Z(\ml_{I^c,J^c}))=Z(\m_\sigma).
			\end{align*}
		\end{defi}
		The fact that the socle in question can be described by a $321$-avoiding permutation follows from \cite[Corollary 4.13]{Gurevich2015}. An immediate consequence of Theorem \ref{thm:soc} is given next.
		\begin{cor}\label{cor:psi}
			Let $(I,J)\in X_n$, then $\m_{\psi(I,J)}=\Psi(\ml_{I,J},\ml_{I^c,J^c}).$
		\end{cor}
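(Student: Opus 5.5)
The corollary follows by combining Theorem \ref{thm:soc} with the uniqueness of the parametrisation of irreducible representations by multisegments.

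First I check that the hypothesis of Theorem \ref{thm:soc} holds. The pair $(\ml_{I,J},\ml_{I^c,J^c})$ must be permissible. Every segment of $\ml_1+\ml_2$ has beginning in $[n]$ and end in $\{n,\dots,2n-1\}$. Hence $\b(\Delta)\leq n\leq \e(\Delta^\prime)<\e(\Delta^\prime)+1$ for all segments $\Delta,\Delta^\prime$ of $\ml_1+\ml_2$. The lemma preceding Section \ref{sec:mat} states that this condition implies permissibility. I would also note two degenerate cases, $I=\emptyset$ and $I=[n]$. In each, one ladder is zero, and both sides reduce to the single nonzero ladder $\ml$. This holds because $\soc(Z(\ml)\times Z(0))=Z(\ml)$, and the matching $M$ is empty, so $\Psi$ returns $\ml$. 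These cases are consistent with the general argument anyway.

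Next I apply Theorem \ref{thm:soc}, which gives
\begin{align*}
Z(\Psi(\ml_{I,J},\ml_{I^c,J^c}))=\soc(Z(\ml_{I,J})\times Z(\ml_{I^c,J^c})).
\end{align*}
By the definition of $\psi$, the right-hand side equals $Z(\m_{\psi(I,J)})$. The Bernstein--Zelevinsky classification (for the fixed cuspidal $\rho$) says that $\m\mapsto Z(\m)$ is injective on multisegments. Therefore $\Psi(\ml_{I,J},\ml_{I^c,J^c})=\m_{\psi(I,J)}$.

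There is no real obstacle here. The only point needing attention is that permissibility really follows from the regularity of $(I,J)$. This requires matching the hypothesis of the lemma, $\b(\Delta)\leq\e(\Delta^\prime)+1$ for all pairs, including pairs within the same ladder, and this is immediate from the coordinate ranges. As a side remark, one could check directly that $\Psi$ produces exactly one segment starting at each $i\in[n]$ and one ending at each $n-1+j$. Union-intersection preserves the multisets of beginnings and ends, so this gives the permutation-matrix shape independently of \cite{Gurevich2015}. The argument above does not need this.
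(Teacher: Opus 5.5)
Your proposal is correct and follows the paper's route. The paper states the corollary as an immediate consequence of Theorem \ref{thm:soc}, having noted earlier that the condition $\b(\Delta)\leq\e(\Delta^\prime)$ makes permissibility automatic. Your write-up spells out those two points (permissibility from the coordinate ranges, and injectivity of $\m\mapsto Z(\m)$), and they are exactly what is needed.
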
 
		Thus the map $\psi$ is the same as $\Psi$ under the above identifications of pairs of sets $(I,J)\in X_n$ with regular pairs of ladders and permutations $\sigma\in S_n$ with certain multisegments. The goal of this section is to prove the following theorem, which counts how many regular pairs $(\ml_1,\ml_2)$ have a specified socle $Z(\m_\sigma)$. Surprisingly, the answer to this is independent of the $321$-avoiding permutation $\sigma$! We denote the set of $321$-avoiding permutations by $S_n^{321}$. It is a classical result that $|S_n^{321}|=\frac{1}{n+1}\binom{2n}{n}$, the $n$-th Catalan number. Note also $|X_n|=\binom{2n}{n}$. To see this, for any $n$-element subset $A$ of $[2n]$, the sets $I=A\cap[n]$ and $J=\{n+1,\dots,2n\}\setminus A$ have the same size and so $A\mapsto(I,J-n)\in X_n$ and this is a bijection. 
		
		\begin{thm}\label{thm:n+1}
			For $n\in\N$ the map $\psi:X_n\rightarrow S_n^{321}$ is $(n+1)$-to-$1$. That is, for each $\sigma\in S_n^{321}$ we have $|\psi^{-1}(\sigma)|=n+1$.
		\end{thm}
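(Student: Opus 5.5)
We use Corollary \ref{cor:psi} to replace $\psi$ by the explicit map $\Psi$. For $(I,J)\in X_n$, write $(\ml_1,\ml_2)=(\ml_{I,J},\ml_{I^c,J^c})$. This is a regular pair, so permissibility is automatic. Let $\Gamma=\Gamma(\ml_1,\ml_2)$; in the dominant case, take the dummy point southeast of everything.

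Then $\m_{\psi(I,J)}$ is obtained as follows. In the northwest region, apply the inverse of the north-west shadow construction: each white point is matched to the first unmatched black point southeast of it, and the pair is swapped. In the southeast region, apply the inverse of the dual (south-east) shadow construction. Viewing $\m_\sigma$ as the permutation matrix of $\sigma$, a swap of a matched pair $(a,b),(a',b')$ with $a<a'$, $b<b'$ produces $(a,b'),(a',b)$. Hence the swapped points in each region form a decreasing pair of chains. This is exactly how Viennot's shadow lines sit on a $321$-avoiding permutation, in which the points lie on the two ``shadow levels''.

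The first key step is to define the mixed-RSK maps $\mathcal R_\Gamma$ on $S_n^{321}$ as sketched in the introduction. Points of $\sigma$ southeast of $\Gamma$ are processed with the north-west orientation, reading off the first-row entries. The remaining points are processed with the south-east orientation, reading off the second-row entries. Together these give a pair $(I,J)$. The second key step is to prove, region by region, that $\Psi\circ\mathcal R_\Gamma(\sigma)=\sigma$ whenever $\Gamma$ is admissible, and conversely that every $(I,J)\in\psi^{-1}(\sigma)$ equals $\mathcal R_{\Gamma'}(\sigma)$ for a cut point $\Gamma'$ determined by $\Gamma(\ml_1,\ml_2)$. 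The converse is argued via the row/column positions of $\Gamma$ after swapping. For the inverse relation in each region separately, I would invoke the classical fact that the matching rule ``first unmatched point southeast'' is exactly the inverse of one step of Viennot's shadow construction for $2$-row tableaux. This is the reformulation of \cite{GL21} mentioned after the definition of $\Psi$, together with its dual obtained through $\Delta\mapsto\Delta^\vee$.

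The main obstacle is compatibility at the cut. One must show that when the two regions are recombined, the $\Gamma(\ml_1,\ml_2)$ computed from $\mathcal R_\Gamma(\sigma)$ is the intended cut, namely the maximal dominant northwest part. Otherwise $\Psi$ would use a different split. I would handle this with Lemma \ref{lem:dualdom} and a direct check that the cut point becomes a white point of $\ml_1$ at the boundary. The plan is to restrict $\Gamma$ to cut positions lying between consecutive ``corners'', that is, points where the shadow lines of the two orientations could be glued, and to show that exactly these cuts give valid preimages.

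The final step is counting. The outputs $\mathcal R_\Gamma(\sigma)$ depend only on which points of $\sigma$ lie southeast of $\Gamma$. I expect the distinct admissible cuts to be parametrized by $k\in\{0,1,\dots,n\}$, where $k$ is the number of points processed in the north-west orientation, giving one cut per diagonal position along the staircase. This gives $n+1$ candidates. Injectivity in $k$ should follow because $|I|$, or the position of $\Gamma$, changes strictly with $k$. As a consistency check, summing over $\sigma$ gives $(n+1)C_n=\binom{2n}{n}=|X_n|$. So it would in fact suffice to show surjectivity onto $\psi^{-1}(\sigma)$ together with at least $n+1$ distinct preimages, or alternatively at most $n+1$ preimages for every $\sigma$. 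I would prove the bound $|\psi^{-1}(\sigma)|\le n+1$ via the converse step and conclude by this counting argument.
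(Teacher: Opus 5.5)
Your overall frame is the paper's: realise elements of $\psi^{-1}(\sigma)$ as mixed-RSK outputs $\mathcal R_\Gamma(\sigma)$, and use $|X_n|=\binom{2n}{n}=(n+1)|S_n^{321}|$ to turn a one-sided bound into equality. The gap is in the step that is supposed to produce the number $n+1$.

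You index the relevant cuts by $k=|\sigma_{\geq\Gamma}|\in\{0,\dots,n\}$ and expect distinctness because $|I|$ changes strictly with $k$. Both claims are false. In the paper's running example $\sigma=241579368$, the ten preimages come from the cuts $(10,10),(9,8),(8,6),(7,3),(6,9),(5,7),(4,5),(3,1),(2,4),(1,2)$, with $k=0,1,2,3,1,3,5,7,6,8$ respectively. The cuts $(9,8)$ and $(6,9)$ both have $k=1$ but give $(\{2,5,6,9\},\{1,3,6,8\})\neq(\{2,4,5,6\},\{1,3,6,9\})$. The values $k=4$ and $k=9$ do not occur, and $|I|=4$ for seven of the ten. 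Nor does the observation that $\mathcal R_\Gamma(\sigma)$ depends only on $\sigma_{\geq\Gamma}$ bound anything by $n+1$: there are in general more than $n+1$ such sets (here e.g.\ $\sigma_{\geq(5,4)}$ and $\sigma_{\geq(1,1)}$ are new). Your admissible cuts ``between consecutive corners'' are never pinned down, and the converse step is only asserted. So as written you have neither $|\psi^{-1}(\sigma)|\geq n+1$ nor $|\psi^{-1}(\sigma)|\leq n+1$.

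The missing idea is the correct index set $\{(i,\sigma(i))\colon i\in[n]\}\cup\{(n+1,n+1)\}$, together with the fact that the cut can be read off from the pair. The paper argues as follows. For $\Gamma=(i,\sigma(i))$, the point $\Gamma$ is northwest of every other point of $\sigma_{\geq\Gamma}$, so it is alone on its shadow line and becomes a segment of $\ml_{I,J}$. The northwest part is an $\mathcal R^{NW}$-image, hence dominant, so $\Gamma(\ml_{I,J},\ml_{I^c,J^c})\geq\Gamma$. The $\mathcal R^{SE}$-structure shows that no white segment $\Delta>\Gamma$ leaves a dominant northwest part. Thus $\Gamma(\ml_{I,J},\ml_{I^c,J^c})=\Gamma$. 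This is precisely your ``compatibility at the cut'', yielding $\psi(\mathcal R_\Gamma(\sigma))=\sigma$ region by region. It also shows that the $n+1$ outputs are pairwise distinct, since their cuts differ, and counting then gives equality.

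Your upper-bound route can also be made to work, and would not even need the forward direction, but it requires a different key fact. For any preimage, the cut $\Gamma(\ml_1,\ml_2)$ is left unmatched by the dual matching: it is northwest of the whole southeast part, and Lemma \ref{lem:dualdom} lets the remaining white segments absorb every black one. So it survives in $\m_\sigma$ as some $(i,\sigma(i))$, or is the dummy when the pair is dominant. Union--intersection within each part keeps points on their side of the cut, and $\Psi$ is injective on regular dominant (dually, dual-dominant) pairs by \cite{GL21}. Hence a preimage is determined by $\sigma$ and its cut. Neither argument is in your proposal.

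Also, you call both the construction that $\Psi$ inverts on the northwest part and the one you apply southeast of $\Gamma$ ``north-west''; the latter must be $\mathcal R^{SE}$.
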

		We will prove this by giving a combinatorial description of $\psi^{-1}(\sigma)$ in terms of the Robinson-Schensted-Knuth correspondence. 
		
		\subsection{Variants of RSK}
		
		The classical Robinson--Schensted--Knuth (RSK) correspondence is a bijection between multisets of integer pairs $\m$ and pairs of semi-standard Young tableaux (SSYT) of the same shape $(P,Q)$, see \cite{Fulton1996} for standard properties of this map. In particular, this gives a map $\mathcal{R}^{SE}$ (the reason for this notation will become clear in the following) taking in multisegments $\m$ and producing pairs of SSYT of the same shape. We will use Viennot's geometric construction (also known as the matrix-ball construction) of RSK, see \cite{Fulton1996} Chapter~4 and Appendix~A for details on this in general. 
		\begin{ex}\label{ex:rskse}
		To illustrate this construction, we consider the $321$-avoiding permutation $\sigma=241579368$ in $S_9$.
		
		\begin{figure}[ht]
			\centering
			\begin{tikzpicture}[x=1.4em,y=1.4em, thick,color = black]
			\draw[step=1,black,thin] (0,0) grid (9,9);
			\draw [black] (1.5,8.5) circle (4pt);
			\draw [black] (3.5,7.5) circle (4pt);
    			\draw [black] (4.5,5.5) circle (4pt);
    			\draw [black] (6.5,4.5) circle (4pt);
    			\draw [black] (8.5,3.5) circle (4pt);
    			\draw [black] (0.5,6.5) circle (4pt);
    			\draw [black] (2.5,2.5) circle (4pt);
    			\draw [black] (5.5,1.5) circle (4pt);
    			\draw [black] (7.5,0.5) circle (4pt);
    			\node at (0.5,9.5) {$1$};
    			\node at (1.5,9.5) {$2$};
    			\node at (2.5,9.5) {$3$};
    			\node at (3.5,9.5) {$4$};
    			\node at (4.5,9.5) {$5$};
    			\node at (5.5,9.5) {$6$};
    			\node at (6.5,9.5) {$7$};
    			\node at (7.5,9.5) {$8$};
    			\node at (8.5,9.5) {$9$};
    			\node at (-0.5,8.5) {$1$};
    			\node at (-0.5,7.5) {$2$};
    			\node at (-0.5,6.5) {$3$};
    			\node at (-0.5,5.5) {$4$};
    			\node at (-0.5,4.5) {$5$};
    			\node at (-0.5,3.5) {$6$};
    			\node at (-0.5,2.5) {$7$};
    			\node at (-0.5,1.5) {$8$};
    			\node at (-0.5,0.5) {$9$};
		\end{tikzpicture}
		\caption{The permutation $\sigma=241579368$ visualised in the plane.}
		\label{fig:experm}
		\end{figure}
		
		The first set of shadow lines in the construction of $\mathcal{R}^{SE}(\m)$ then looks as illustrated by Figure \ref{fig:shad1}.
		
		\begin{figure}[ht]
			\centering
			\begin{tikzpicture}[x=1.4em,y=1.4em, thick,color = black]
			\draw[step=1,black,thin] (0,0) grid (9,9);
			\draw [black] (1.5,8.5) circle (4pt);
			\draw [black] (3.5,7.5) circle (4pt);
    			\draw [black] (4.5,5.5) circle (4pt);
    			\draw [black] (6.5,4.5) circle (4pt);
    			\draw [black] (8.5,3.5) circle (4pt);
    			\draw [black] (0.5,6.5) circle (4pt);
    			\draw [black] (2.5,2.5) circle (4pt);
    			\draw [black] (5.5,1.5) circle (4pt);
    			\draw [black] (7.5,0.5) circle (4pt);
    			\draw [blue] (0.5,-0.5) -- (0.5,6.5) -- (1.5,6.5) -- (1.5,8.5) -- (9.5,8.5);
    			\draw [blue] (2.5,-0.5) -- (2.5,2.5) -- (3.5,2.5) -- (3.5,7.5) -- (9.5,7.5);
    			\draw [blue] (4.5,-0.5) -- (4.5,5.5) -- (9.5,5.5);
    			\draw [blue] (5.5,-0.5) -- (5.5,1.5) -- (6.5,1.5) -- (6.5,4.5) -- (9.5,4.5);
    			\draw [blue] (7.5,-0.5) -- (7.5,0.5) -- (8.5,0.5) -- (8.5,3.5) -- (9.5,3.5);
		\end{tikzpicture}
		\caption{The first step in the computation of $\mathcal{R}^{SE}(\sigma)$.}
		\label{fig:shad1}
		\end{figure}
		
		From Figure \ref{fig:shad1} we can read off the first rows of $\mathcal{R}^{SE}(\sigma)=(P,Q)$: $P$ is made up of the row indices of the blue lines leaving the square on the right and $Q$ is made up of the column indices of the lines entering the square from the bottom. So the first row of $P$ is $1,2,4,5,6$ and that of $Q$ is $1,3,5,6,8$.
		
		Draw a circle in each square containing an elbow of a blue line and delete all circles of the original permutation: this produces a new constellation of circles and we repeat this process to get the next rows of $P,Q$. Since $\sigma$ is $321$-avoiding, the process terminates after this step.
		
		\begin{figure}[ht]
			\centering
			\begin{tikzpicture}[x=1.4em,y=1.4em, thick,color = black]
			\draw[step=1,black,thin] (0,0) grid (9,9);
			\draw [black] (1.5,6.5) circle (4pt);
			\draw [black] (3.5,2.5) circle (4pt);
    			\draw [black] (6.5,1.5) circle (4pt);
    			\draw [black] (8.5,0.5) circle (4pt);
    			\draw [blue] (1.5,-0.5) -- (1.5,6.5) -- (9.5,6.5);
    			\draw [blue] (3.5,-0.5) -- (3.5,2.5) -- (9.5,2.5);
    			\draw [blue] (6.5,-0.5) -- (6.5,1.5) -- (9.5,1.5);
    			\draw [blue] (8.5,-0.5) -- (8.5,0.5) -- (9.5,0.5);
		\end{tikzpicture}
		\caption{The second step in the computation of $\mathcal{R}^{SE}(\sigma)$.}
		\end{figure}
		
		The second rows of $P,Q$ are therefore given by $3,7,8,9,$ and $2,4,7,9,$ respectively. In total this shows:
		\begin{align*}
			\mathcal{R}^{SE}(\m)=\left(\begin{ytableau} 1&2&4&5&6\\3&7&8&9 \end{ytableau},\begin{ytableau} 1&3&5&6&8\\2&4&7&9\end{ytableau}\right).
		\end{align*}
		\end{ex}
		This construction leaves some choices regarding the orientation of the blue shadow lines. We will need a different choice of orientations where, going bottom to top, lines enter from the left and exit at the top, which we denote by $\mathcal{R}^{NW}$. This produces tableaux with strictly decreasing rows and weakly decreasing columns, which, following \cite{GL21}, we will refer to as \textit{inverted} semi-standard Young tableaux (ISSYT). The map $\mathcal{R}^{NW}$ is just the map $\mathcal{RSK}$ of \cite{GL21}. For clarity, we illustrate the construction on the same example as above.
		\begin{ex}\label{ex:rsknw}
			Again take $\sigma=241579368$, then the first step in the construction of $\mathcal{R}^{NW}(\sigma)=(P,Q)$ yields the shadow lines
			
		\begin{figure}[ht]
			\centering
			\begin{tikzpicture}[x=1.4em,y=1.4em, thick,color = black]
			\draw[step=1,black,thin] (0,0) grid (9,9);
			\draw [black] (1.5,8.5) circle (4pt);
			\draw [black] (3.5,7.5) circle (4pt);
    			\draw [black] (4.5,5.5) circle (4pt);
    			\draw [black] (6.5,4.5) circle (4pt);
    			\draw [black] (8.5,3.5) circle (4pt);
    			\draw [black] (0.5,6.5) circle (4pt);
    			\draw [black] (2.5,2.5) circle (4pt);
    			\draw [black] (5.5,1.5) circle (4pt);
    			\draw [black] (7.5,0.5) circle (4pt);
    			\draw [blue] (-0.5,0.5) -- (7.5,0.5) -- (7.5,3.5) --(8.5,3.5) -- (8.5,9.5);
    			\draw [blue] (-0.5,1.5) -- (5.5,1.5) -- (5.5,4.5) -- (6.5,4.5) -- (6.5,9.5);
    			\draw [blue] (-0.5,2.5) -- (2.5,2.5) -- (2.5,5.5) -- (4.5,5.5) -- (4.5,9.5);
    			\draw [blue] (-0.5,6.5) -- (0.5,6.5) -- (0.5,7.5) -- (3.5,7.5) -- (3.5,9.5);
    			\draw [blue] (-0.5,8.5) -- (1.5,8.5) -- (1.5,9.5);
		\end{tikzpicture}
		\caption{The first step in the computation of $\mathcal{R}^{NW}(\sigma)$.}
		\end{figure}
		
			Here too, we read off the first rows of $P,Q$ by the row resp. column indices of the blue lines. This gives $9,8,7,3,1$ for $P$ and $9,7,5,4,2$ for $Q$. Again placing circles in the positions where the blue lines have elbows we create a new constellation and repeat this process. In total, the resulting tableaux are 
			\begin{align*}
				\mathcal{R}^{NW}=\left(\begin{ytableau} 9&8&7&3&1\\6&5&4&2\end{ytableau},\begin{ytableau} 9&7&5&4&2\\8&6&3&1\end{ytableau}\right).
			\end{align*}
		\end{ex}
		If $\sigma\in S_n$ is $321$-avoiding, then by standard properties of the RSK-correspondence, $\mathcal{R}^{SE}(\sigma),\mathcal{R}^{NW}(\sigma)$ will consist of (inverse) standard tableaux with at most $2$ rows and entries in $\{1,\dots,n\}$. This allows us to identify each tableau with just one of its rows (since the other row is then uniquely determined), i.e. with a subset of $\{1,\dots,n\}$.
		
		Setting $\mathcal{R}^{NW}(\sigma)=(P,Q)$, we identify $(P,Q)$ with the pair $(I,J)\in X_n$ where $I$ resp. $J$ is the second row of $P$ resp. $Q$ (which might be empty). Similarly, if $\mathcal{R}^{SE}(\sigma)=(\widetilde{P},\widetilde{Q})$, we identify this with a pair $(\widetilde{I},\widetilde{J})\in X_n$ where we take the first row of $\widetilde{P}$ resp. $\widetilde{Q}$. Restricting to $321$-avoiding permutations, we will thus view $\mathcal{R}^{NW},\mathcal{R}^{SE}$ as maps $S_n^{321}\rightarrow X_n$. 
		
		Translating Theorem 4.3 of \cite{GL21} to our setting immediately shows
		\begin{prop}\label{prop:nwrsk}
			If $\sigma\in S_n^{321}$ and $\mathcal{R}^{NW}(\sigma)=(I,J)$, then $\psi(I,J)=\sigma$.
		\end{prop}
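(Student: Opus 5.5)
We reduce Proposition~\ref{prop:nwrsk} to the dominant case of our construction, where $\Psi$ coincides with the map $\mathcal{K}^\prime$ of \cite[Proposition 2.4]{GL21}, and then invoke \cite[Theorem 4.3]{GL21}.

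Let $\sigma\in S_n^{321}$ with $\mathcal{R}^{NW}(\sigma)=(P,Q)$, where $P,Q$ are inverted standard tableaux with at most two rows. Write $I,J$ for the second rows and $I^c,J^c$ for the first rows.

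In \cite{GL21}, the map $\mathcal{RSK}=\mathcal{R}^{NW}$ assigns to a multisegment $\m$ a sequence of ladders, one for each row of the pair $(P,Q)$. The $r$-th ladder is obtained by pairing the $i$-th entries of the $r$-th rows of $P$ and $Q$ into segments $[P_{r,i},Q_{r,i}+n-1]$, after our shift by $n-1$. Since rows of inverted tableaux are strictly decreasing, pairing entries of the same index gives a ladder in aligned form. So the first row gives $\ml_{I^c,J^c}$ and the second gives $\ml_{I,J}$.

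Weak decrease down columns means that the $i$-th segment of the second-row ladder is northwest (in the product order) of the $i$-th segment of the first-row ladder, and the second row is no longer than the first. Hence the pair $(\ml_{I,J},\ml_{I^c,J^c})$ is dominant in the sense of Definition~\ref{def:dominant}, consistent with the remark following Lemma~\ref{lem:dagpair}.

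\cite[Theorem 4.3]{GL21} then states that $Z(\m_\sigma)$ is the socle of the product of the ladder representations in the order $Z(\ml_2)\times Z(\ml_1)$, i.e. second row first. This gives
\[ \soc(Z(\ml_{I,J})\times Z(\ml_{I^c,J^c}))=Z(\m_\sigma). \]
Equivalently, $\m_\sigma=\mathcal{K}^\prime(\ml_{I,J},\ml_{I^c,J^c})$ in the notation of their Proposition 2.4.

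By the definition of $\psi$ and its uniqueness (the socle is irreducible by \cite{LM16}), we obtain $\psi(I,J)=\sigma$. Alternatively, the pair is regular, so it is permissible, and Corollary~\ref{cor:psi} together with the remark identifying $\Psi$ with $\mathcal{K}^\prime$ on dominant pairs gives $\m_{\psi(I,J)}=\Psi(\ml_{I,J},\ml_{I^c,J^c})=\m_\sigma$.

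The main obstacle is bookkeeping of conventions. We must check that our $\mathcal{R}^{NW}$, with matrix coordinates and the shift by $n-1$, literally matches the $\mathcal{RSK}$ of \cite{GL21}. In particular, we must confirm that the row-to-ladder assignment and the product order agree, so that the second row really gives the left factor $\ml_{I,J}$. Concretely, we would verify on Example~\ref{ex:rsknw} that $(\{6,5,4,2\},\{8,6,3,1\})$ produces socle $\m_{241579368}$ via $\Psi$; the second-row ladder is matched northwest-to-southeast as in Section~\ref{sec:mat}.
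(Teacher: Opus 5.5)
Your proposal is correct and follows the paper's proof. The paper likewise observes that $\mathcal{R}^{NW}(\sigma)=(I,J)$ amounts to $\mathcal{K}(\m_\sigma)=(\ml_{I^c,J^c},\ml_{I,J})$ in the notation of \cite{GL21}, with the first row giving $\ml_{I^c,J^c}$ and the second row giving $\ml_{I,J}$, and then reads off $\soc(Z(\ml_{I,J})\times Z(\ml_{I^c,J^c}))=Z(\m_\sigma)$ directly from \cite[Theorem 4.3]{GL21}; your extra dominance check and your alternative route through Corollary~\ref{cor:psi} are consistent with this but not needed.
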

		\begin{proof}
			This is just a special case of a reformulation of \cite[Theorem 4.3.]{GL21} $\mathcal{R}^{NW}(\sigma)=(I,J)$ is precisely saying that $\mathcal{K}(\m_\sigma)=(\ml_{I^c,J^c},\ml_{I,J})$ in the notation of \cite{GL21} and therefore
			\begin{align*}
				\soc(Z(\ml_{I,J})\times Z(\ml_{I^c,J^c}))=Z(\m_\sigma),
			\end{align*}
			which by definition means $\psi(I,J)=\sigma$.
		\end{proof}
		Passing to contragredients, it is easy to see that also $\mathcal{R}^{SE}$ is a section of the map $\psi$ in this sense. Since taking contragredients switches the order of parabolic induction, the identification of tableaux with subsets of $[n]$ also has to be switched from SYT to ISYT.
		\begin{rmk}
			The previous proposition shows how the map $\Psi$ is constructed precisely to be the inverse of the RSK map $\mathcal{R}^{NW}$ in the northwest region and of $\mathcal{R}^{SE}$ in the southeast region. Indeed, the way $\Psi$ is constructed, matching points and switching their coordinates, is a well-known way of constructing the inverse to $\mathcal{R}^{NW}$.
		\end{rmk} 
		\subsection{Mixed RSK}
		To describe the whole set $\psi^{-1}(\sigma)$ for $\sigma\in S_n^{321}$ we will now 'mix' $\mathcal{R}^{NW},\mathcal{R}^{SE}$ in a similar way to the construction of $\Psi$. More precisely, for any point $(a,b)\in\Z^2$ we can apply $\mathcal{R}^{SE}$ on the region of $\sigma$ that is southeast of $(a,b)$ and $\mathcal{R}^{NW}$ on the remaining region to get two pairs in $X_n$ by the above identifications. These will turn out to be component-wise disjoint and taking their component-wise union produces another pair $X_n$. See also Example \ref{ex:mixrsk} below. Letting the point $(a,b)$ vary over $\Z^2$ for a fixed $\sigma$ will then produce all pre-images of $\sigma$ under $\psi$. Making this precise and proving it is the content of this section. 
		
		\begin{defi}\label{def:mixrsk}
			Let $\sigma\in S_n^{321}$ and $\Gamma=(a,b)\in\Z^2$. Set $\sigma_{\ngeq \Gamma}=\{(i,\sigma(i))\colon i<a\text{ or }\sigma(i)<b\}$ and $\sigma_{\geq \Gamma}=\{(i,\sigma(i)\colon i\geq a,\sigma(i)\geq b\}$. With $\mathcal{R}^{NW}(\sigma_{\ngeq \Gamma})=(I^N,J^N)$ and $\mathcal{R}^{SE}(\sigma_{\geq\Gamma})=(I^S,J^S)$ we define
			\begin{align*}
				\mathcal{R}_\Gamma(\sigma)=(I^N\cup I^S,J^N\cup J^S).
			\end{align*}
			We call $\mathcal{R}_\Gamma$ a \textit{mixed} RSK map.
		\end{defi}
		The condition $i\geq a,j\geq b$ can also be phrased visually as $(i,j)$ being southeast of $(a,b)$ in our shadow line setup. So southeast of $(a,b)$ we apply $\mathcal{R}^{SE}$ and in the remaining region we apply $\mathcal{R}^{NW}$. This is the reason for the indexing of these maps.
		\begin{rmk}
			The $321$-avoidance of $\sigma$ implies that $I^N\cap I^S=J^N\cap J^S=\emptyset$ so the unions in the definition of $\mathcal{R}_\Gamma(\sigma)$ are actually disjoint. In fact it is not hard to see that we even have $\max I^N < \min I^S$ and the same for $J^N,J^S$ for $321$-avoiding $\sigma$. In particular this shows that $\mathcal{R}_\Gamma(\sigma)\in X_n$ and so we get a map $\mathcal{R}_\Gamma: S_n^{321}\rightarrow X_n$. We will show in Lemma \ref{lem:mixrsk} that $\mathcal{R}_\Gamma$ is a section of $\psi$ for any $\Gamma\in\Z^2$.
			
			If $\Gamma=(n+1,n+1)$, then for all $\sigma\in S_n$ we have $\sigma_{\geq \Gamma}=\emptyset$ and $\sigma_{\ngeq \Gamma}=\sigma$, so
			\begin{align*}
				\mathcal{R}_{(n+1,n+1)}=\mathcal{R}^{NW}.
			\end{align*}
			Similarly, we also have $\mathcal{R}_{(1,1)}=\mathcal{R}^{SE}$. Thus both of the 'pure' orientation RSK maps that we introduced above occur as natural special cases of the mixed construction $\mathcal{R}_\Gamma$.
		\end{rmk}
		\begin{ex}\label{ex:mixrsk}
			We continue with the running example $\sigma=241579368$ and take as cut-off $\Gamma=(4,5)$. Then 
			\begin{align*}
				\sigma_{\ngeq \Gamma}&=\{(1,2),(2,4),(3,1),(7,3) \}\quad\text{and}\\ \sigma_{\geq \Gamma}&=\{(4,5),(5,7),(6,9),(8,6),(9,8)\}
			\end{align*}
			are the partial permutations defined by $\Gamma$. As Figure \ref{fig:exmix} illustrates, we have 
			\begin{align*}
				\mathcal{R}^{NW}(\sigma_{\ngeq \Gamma})&=(\{1,2\},\{1,3\})\quad\text{and} \\ 
				\mathcal{R}^{SE}(\sigma_{\geq \Gamma})&=(\{4,5,6\},\{5,6,8\})
			\end{align*}
			and so
			\begin{align*}
				\mathcal{R}_{(4,5)}(241579368)=(\{1,2,4,5,6\},\{1,3,5,6,8\})\in X_6.
			\end{align*}
			\begin{figure}[h!]
			\centering
			\begin{tikzpicture}[x=1.4em,y=1.4em, thick,color = black]
			\draw[step=1,black,thin] (0,0) grid (9,9);
			\draw [black] (1.5,8.5) circle (4pt);
			\draw [black] (3.5,7.5) circle (4pt);
    			\draw [black] (4.5,5.5) circle (4pt);
    			\draw [black] (6.5,4.5) circle (4pt);
    			\draw [black] (8.5,3.5) circle (4pt);
    			\draw [black] (0.5,6.5) circle (4pt);
    			\draw [black] (2.5,2.5) circle (4pt);
    			\draw [black] (5.5,1.5) circle (4pt);
    			\draw [black] (7.5,0.5) circle (4pt);
    			\draw [orange] (4,-0.5) -- (4,6) -- (9.5,6);
    			\draw [black] (-0.5,2.5) -- (2.5,2.5) -- (2.5,7.5) -- (3.5,7.5) -- (3.5,9.5);
    			\draw [black] (-0.5,6.5) -- (0.5,6.5) -- (0.5,8.5) -- (1.5,8.5) -- (1.5,9.5);
    			\draw [black] (4.5,-0.5) -- (4.5,5.5) -- (9.5,5.5);
    			\draw [black] (5.5,-0.5) -- (5.5,1.5) -- (6.5,1.5) -- (6.5,4.5) -- (9.5,4.5);
    			\draw [black] (7.5,-0.5) -- (7.5,0.5) -- (8.5,0.5) -- (8.5,3.5) -- (9.5,3.5);
    			\node [blue] at (-0.5,8.5) {$1$};
    			\node [blue] at (0.5,9.5) {$1$};
    			\node [blue] at (-0.5,7.5) {$2$};
    			\node [blue] at (2.5,9.5) {$3$};
    			
    			\node [blue] at (4.5,-1) {$5$};
    			\node [blue] at (10,5.5) {$4$};
    			\node [blue] at (5.5,-1) {$6$};
    			\node [blue] at (10,4.5) {$5$};
    			\node [blue] at (7.5,-1) {$8$};
    			\node [blue] at (10,3.5) {$6$};
			\end{tikzpicture}
			\caption{Illustrating the computation of $\mathcal{R}_{(4,5)}(241579368)$.}
			\label{fig:exmix}
		\end{figure}
    			In Figure \ref{fig:exmix} the orange line indicates the cut-off $\Gamma=(4,5)$: south-east of the orange line we apply $\mathcal{R}^{SE}$ and on the other side we apply $\mathcal{R}^{NW}$. In blue are the elements of the sets $I,J$ that each part of the construction yields, where the numbers indexing rows form $I$ and those indexing columns form $J$.
    			
			Identifying everything again with multisegments, we see that $\mathcal{R}_{(4,5)}(241579368)$ corresponds precisely to the pair of ladders $(\ml_1,\ml_2)$ considered in Example \ref{ex:Psi} and we have already shown in Figure \ref{fig:expsi} that $\Psi(\ml_1,\ml_2)=m_\sigma$. Thus, with the mixed RSK map $\mathcal{R}_{(4,5)}$ we have constructed another preimage of $\sigma$ under the map $\psi$. The following lemma shows that this holds in general and that all preimages can be found in this way.

		\end{ex}

		\begin{lem}\label{lem:mixrsk}
			Let $\sigma\in S_n^{321}$ be a $321$-avoiding permutation.
			\begin{enumerate}[a)]
				\item  If $\Gamma\in\Z^2$ is arbitrary, then either $\mathcal{R}_\Gamma(\sigma)=\mathcal{R}^{NW}(\sigma)$ or there is $i\in[n]$ such that $\mathcal{R}_\Gamma(\sigma)=\mathcal{R}_{(i,\sigma(i))}(\sigma)$. So,
				\begin{align*}
					\{\mathcal{R}_\Gamma(\sigma)\colon \Gamma\in\Z^2\}=\{\mathcal{R}_\Gamma(\sigma)\colon \Gamma=(i,\sigma(i))\text{ for some }i\text{ or }\Gamma=(n+1,n+1)\}.
				\end{align*}
				\item If $\Gamma=(i,\sigma(i))$ or $\Gamma=(n+1,n+1)$, and $(I,J)=\mathcal{R}_\Gamma(\sigma)$, then
				\begin{align*}
					\Psi(\ml_{I,J},\ml_{I^c,J^c})=\m_\sigma,
				\end{align*}
				in other words, $\psi(\mathcal{R}_\Gamma(\sigma))=\sigma$.
				\item 
				All pairs $\mathcal{R}_\Gamma(\sigma)$ with $\Gamma=(i,\sigma(i))$ for some $i$ or $\Gamma=(n+1,n+1)$ are distinct.
			\end{enumerate}
		\end{lem}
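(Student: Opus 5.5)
For a) I would use the fact that $\mathcal{R}_\Gamma(\sigma)$ depends on $\Gamma$ only through the partition of the points of $\sigma$ into $\sigma_{\geq\Gamma}$ and $\sigma_{\ngeq\Gamma}$. Given $\Gamma=(a,b)$, either $\sigma_{\geq\Gamma}=\emptyset$, which gives $\mathcal{R}^{NW}(\sigma)=\mathcal{R}_{(n+1,n+1)}(\sigma)$, or it is not. In the latter case I would try to replace $\Gamma$ by a point of $\sigma$ inducing the same split. Since $\sigma$ is $321$-avoiding, $\sigma_{\geq\Gamma}$ is a union of at most two increasing chains, so its set of points is cut out by the quadrant condition. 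The natural candidate is the point $(i,\sigma(i))\in\sigma_{\geq\Gamma}$ with $i$ minimal. It induces a split only if every point of $\sigma_{\geq\Gamma}$ is southeast of it. If that fails, I would instead compare the splits, which may differ, and show that the resulting pairs of subsets agree anyway. To do this I would look at the shadow lines: a point of $\sigma_{\geq\Gamma}$ lying west of the chosen point moves to the NW side, and I would check that it contributes the same row and column labels there. This part is fiddly, and I would handle it by a local case analysis of the first shadow line.

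For b) I would reduce to Theorem \ref{thm:soc} (via Corollary \ref{cor:psi}) and compute $\Psi$ of the pair $(\ml_{I,J},\ml_{I^c,J^c})$ with $(I,J)=\mathcal{R}_\Gamma(\sigma)$. The key claim is that, for $\Gamma=(i,\sigma(i))$, the cut-off $\Gamma(\ml_1,\ml_2)$ of the pair is exactly the white segment corresponding to the NE-corner of the first SE shadow line. In the example this is $[4,13]$, coming from the point $(4,5)$. Moreover, the northwest part of the pair should be $\mathcal{R}^{NW}(\sigma_{\ngeq\Gamma})$ read as a dominant pair, and the southeast part should be $\mathcal{R}^{SE}(\sigma_{\geq\Gamma})$ read as a dual-dominant pair. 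Once this is established, the matching rule in the northwest part of $M(\ml_1,\ml_2)$ is the standard inverse of $\mathcal{R}^{NW}$ (Proposition \ref{prop:nwrsk} and the remark after it). In the southeast part, the dual matching is the inverse of $\mathcal{R}^{SE}$ by passing to contragredients. Union-intersection then recovers exactly the points of $\sigma_{\ngeq\Gamma}$ and of $\sigma_{\geq\Gamma}$, so $\Psi=\m_\sigma$.

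Identifying $\Gamma(\ml_1,\ml_2)$ correctly is the main obstacle. One must show that the pair restricted northwest of the chosen white segment is dominant, which follows from the tableau column condition of $\mathcal{R}^{NW}$. One must also show that no segment further southeast works, which follows from Lemma \ref{lem:dualdom}-type arguments. For the latter, I would exhibit an explicit violation of dominance coming from the first shadow line of $\mathcal{R}^{SE}(\sigma_{\geq\Gamma})$, i.e. a black point directly threatening the next white segment. The remark $\max I^N<\min I^S$ (and likewise for $J$) helps here, since it makes the two regions occupy separate row and column ranges.

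For c), I would recover $\Gamma$ from $\mathcal{R}_\Gamma(\sigma)$: by b) and the identification above, the cut-off $\Gamma(\ml_1,\ml_2)$ determines the corner of the SE region. This requires showing that distinct points $(i,\sigma(i))$, together with $(n+1,n+1)$, give distinct splits, and hence distinct cut-offs. The case $(n+1,n+1)$ is distinguished because it is the only one whose pair is dominant. Two different points $(i,\sigma(i))$ yield different white segments $\Gamma(\ml_1,\ml_2)$, hence different $(I,J)$. Together a)–c) give $n+1$ distinct preimages.
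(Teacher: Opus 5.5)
\textbf{The gap is in part a).} Your parts b) and c) follow the paper's argument. There, the cut-off of $(\ml_{I,J},\ml_{I^c,J^c})$ is the white segment at $\Gamma=(i,\sigma(i))$, the northwest part is dominant because it comes from $\mathcal{R}^{NW}$, and distinct $\Gamma$ give distinct cut-offs. In part a), however, the reduction to the minimal-row point of $\sigma_{\geq\Gamma}$ is false.

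Take $\sigma=21$ and $\Gamma=(1,1)$. Both points $(2,1),(1,2)$ lie on one SE shadow line, which enters in column $1$ and leaves in row $1$. Hence $\mathcal{R}_{(1,1)}(\sigma)=(\{1\},\{1\})=\mathcal{R}^{NW}(\sigma)$, while your candidate gives $\mathcal{R}_{(1,2)}(\sigma)=(\{1\},\{2\})$. So your dichotomy (``$\sigma_{\geq\Gamma}=\emptyset$ gives $\mathcal{R}^{NW}$, otherwise use some point of $\sigma_{\geq\Gamma}$'') already fails: the $\mathcal{R}^{NW}$ alternative is also needed for nonempty SE parts. Even when the answer is some $\mathcal{R}_{(i,\sigma(i))}(\sigma)$, the point is not your candidate. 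In the running example, $\mathcal{R}_{(1,1)}(\sigma)=\mathcal{R}^{SE}(\sigma)=\mathcal{R}_{(4,5)}(\sigma)$, whereas $\mathcal{R}_{(1,2)}(\sigma)$ has $J=\{2,3,5,6,8\}$.

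The planned local check fails for a structural reason. Suppose the first SE line of $\sigma_{\geq\Gamma}$ is $\{B,A\}$ with $A$ the upper point, so that $A$ is your candidate. This line contributes the row of $A$ and the column of $B$. Cutting at $A$ moves $B$ to the NW side and makes $A$ a one-point line, which contributes the column of $A$ instead.

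\textbf{What works instead} is to move the cut-off southeast, past whole two-point lines. Let $L_1,L_2,\dots$ be the SE shadow lines of $\sigma_{\geq\Gamma}$, and let $L_t=\{p\}$ be the first one-point line. Then $\sigma_{\geq p}=L_t\cup L_{t+1}\cup\cdots$ and $\mathcal{R}_\Gamma(\sigma)=\mathcal{R}_p(\sigma)$. If there is no one-point line, then $\mathcal{R}_\Gamma(\sigma)=\mathcal{R}^{NW}(\sigma)$. Both facts rest on $321$-avoidance:
\begin{enumerate}
\item Each point of a two-point line $L_j$ has a SE successor in $L_{j+1}$ whenever $L_{j+1}$ also has two points; otherwise a $321$ pattern appears. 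Hence $L_{t-1},\dots,L_1$ become the first $t-1$ NW shadow lines of $\sigma_{\ngeq p}$, with elbows exactly at their old SE labels.
\item No point of $\sigma_{\ngeq\Gamma}$ can share one of these NW levels, since that would create a $3$-element antichain. This forces the NW lines of $\sigma_{\ngeq\Gamma}$ to survive unchanged, only shifted by $t-1$ levels.
\end{enumerate}

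\textbf{A caution about the paper's route.} The paper also moves the cut-off southeast, via $(a,b)\mapsto(\sigma^{-1}(b),\sigma(a))$ on the Rothe diagram plus induction. Do not import that single step as stated. For $(a,b)=(1,1)$ in the running example it lands at $(3,2)$, where $(2,4)$ also crosses the cut, and $\mathcal{R}_{(3,2)}(\sigma)=\mathcal{R}^{NW}(\sigma)\neq\mathcal{R}_{(1,1)}(\sigma)$.
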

		\begin{proof}
			\begin{enumerate}[a)]
			\item It is clear that $\mathcal{R}_\Gamma(\sigma)$ only depends on the set $\sigma_{\geq\Gamma}$ and not on the precise position $\Gamma\in\Z^2$. Let $D(\sigma)=\{(i,j)\in[n]^2\colon j<\sigma(i),\,i<\sigma^{-1}(j)\}$ denote the diagram of $\sigma$. Then it is easy to see that as $\Gamma$ varies over the set $\widetilde{D}(\sigma)\coloneqq D(\sigma)\cup\{(i,\sigma(i))\colon i\in[n]\}\cup\{(n+1,n+1)\}$, all possibilities for the set $\sigma_{\geq\Gamma}$ for $\Gamma\in\Z^2$ will occur. It is therefore enough to consider only $\Gamma\in\widetilde{D}(\sigma)$. 
			
			 We show 
					\begin{align*}
						\{\mathcal{R}_\Gamma(\sigma)\colon \Gamma\in\widetilde{D}(\sigma)\}=\{\mathcal{R}_\Gamma(\sigma)\colon \Gamma=(i,\sigma(i))\text{ for some }i\text{ or }\Gamma=(n+1,n+1)\},
					\end{align*}
					by showing that for all $(a,b)\in D(\sigma)$ there is $(i,\sigma(i))$ such that $\mathcal{R}_{(a,b)}(\sigma)=\mathcal{R}_{(i,\sigma(i))}(\sigma)$ or $\mathcal{R}_{(a,b)}(\sigma)=\mathcal{R}^{NW}(\sigma)$.
					
					To see this, we first note that for $(a,b)\in D(\sigma)$ we have
					\begin{align*}
						\mathcal{R}_{(a,b)}(\sigma)=\mathcal{R}_{(\sigma^{-1}(b),\sigma(a))}(\sigma).
					\end{align*}
					Indeed, letting $\mathcal{R}_{(a,b)}(\sigma)=(I,J)$ we have $a\in I$, $b\in J$ from $\mathcal{R}^{SE}(\sigma_{\geq(a,b)})$. In $\mathcal{R}_{(\sigma^{-1}(b),\sigma(a))}(\sigma)$ we will have $a$ and $b$ appearing in the same sets but now from $\mathcal{R}^{NW}(\sigma_{\ngeq(\sigma^{-1}(b),\sigma(a))})$. See also Figure \ref{fig:proof} for an illustration of this argument: in the first diagram $a\in I,b\in J$ because of lines entering and leaving in certain columns in the SE-region, in the second diagram this turned into a corner of a line at $(a,b)$ in the NW-region for $\mathcal{R}_{(\sigma^{-1}(b),\sigma(a))}(\sigma)$. It is easy to see that no other changes in $\mathcal{R}_{(a,b)}(\sigma)$ can occur: here the $321$-avoidance of $\sigma$ is essential since it guarantees that no more than $2$ points of $\sigma$ can be on any shadow line.
					\begin{figure}
					\centering
					\begin{tikzpicture}[x=1.4em,y=1.4em, thick,color = black]
						\draw[step=1,black,thin](0,0) grid (4,5);
						\draw (3.5,4.5) circle (4pt);
						\draw (0.5,0.5) circle (4pt);
						\draw [orange] (0,-0.0) -- (0,5) -- (4.5,5);
						\draw (0.5,-0.5) -- (0.5,0.5) -- (3.5,0.5) -- (3.5,4.5) -- (4.5,4.5);
						\node [orange] at (-0.5,4.5) {$a$};
						\node [orange] at (0.5,5.5) {$b$};
						\node [black] at (5.5,0.5) {$\sigma^{-1}(b)$};
						\node [black] at (3.5,-0.5) {$\sigma(a)$};
					\end{tikzpicture}\quad\quad
					\begin{tikzpicture}[x=1.4em,y=1.4em, thick,color = black]
						\draw[step=1,black,thin](0,0) grid (4,5);
						\draw (3.5,4.5) circle (4pt);
						\draw (0.5,0.5) circle (4pt);
						\draw [orange]  (3,-0.5) -- (3,1) -- (4.5,1);
						\draw (-0.5,0.5) -- (0.5,0.5) -- (0.5,4.5) -- (3.5,4.5) -- (3.5,5.5);
						\node [black] at (-0.5,4.5) {$a$};
						\node [black] at (0.5,5.5) {$b$};
						\node [orange] at (5.5,0.5) {$\sigma^{-1}(b)$};
						\node [orange] at (3.9,-0.5) {$\sigma(a)$};
					\end{tikzpicture}
					\caption{Illustrating the effect of moving the cut-off in the diagram of $\sigma$.}
					\label{fig:proof}
					\end{figure}
					
					Assume that $(a,b)\in D(\sigma)$ is such that $\mathcal{R}_{(a,b)}(\sigma)\notin \{\mathcal{R}_\Gamma(\sigma)\colon \Gamma=(i,\sigma(i))\text{ for some }i\}$ and is SE-most with this property. Since $\mathcal{R}_{(a,b)}(\sigma)=\mathcal{R}_{(\sigma^{-1}(b),\sigma(a))}(\sigma)$ and $(\sigma^{-1}(b),\sigma(a))$ is SE of $(a,b)$, we see that $\mathcal{R}_{(a,b)}(\sigma)$ must be equal to $\mathcal{R}^{NW}$ by the assumptions on $(a,b)$.
					
				From this the claim follows by a simple induction: if $(a,b)\in D(\sigma)$ is arbitrary, then either $b=\sigma(a)$ and there is nothing to show, or $(\sigma^{-1}(b),\sigma(a))$ is SE of it and we can assume by induction that $\mathcal{R}_{(\sigma^{-1}(b),\sigma(a))}(\sigma)$ is in $\{\mathcal{R}_\Gamma(\sigma)\colon \Gamma=(i,\sigma(i))\text{ for some }i\text{ or }\Gamma=(n+1,n+1)\}$.
				
				Note that the above argument is constructive in the sense that one can use it to quickly find the position $\widetilde{\Gamma}\in\{(i,\sigma(i))\text{ for some }i\}\cup\{(n+1,n+1)\}$ such that $\mathcal{R}_\Gamma(\sigma)=\mathcal{R}_{\widetilde{\Gamma}}(\sigma)$ starting with any $\Gamma\in\Z^2$.
				
				\item The map $\Psi$ is constructed precisely in such a way that, in the region that is southeast of $\Gamma(\ml_1,\ml_2)$, it is inverse to $\mathcal{R}^{SE}$ and in the remaining region it is inverse to $\mathcal{R}^{NW}$ (cf. \cite{GL21} Proposition 2.4.) and $\Psi(\ml_1,\ml_2)$ is the sum of these inverses. Thus it suffices to show that $\Gamma(\ml_{I,J},\ml_{I^c,J^c})=\Gamma$, i.e. that the pair of ladders produced by a mixed RSK map has the same cut off as the mixed RSK map.
				
				For notational simplicity set $\ml=\ml_{I,J}$ and $\ml^c=\ml_{I^c,J^c}$. By the definition of $\mathcal{R}_\Gamma(\sigma)=(I,J)$ it is clear that $\Gamma$ corresponds to a segment of $\ml$ and that $(\ml_{\ngeq \Gamma},\ml^c_{\ngeq \Gamma})$ is the pair of ladders corresponding to the image of $\mathcal{R}^{NW}(\sigma_{\ngeq \Gamma})$. Since any image under $\mathcal{R}^{NW}$ is dominant (this is just a reformulation of the monotonicity conditions on SYT), we see that $(\ml_{\ngeq \Gamma},\ml^c_{\ngeq \Gamma})$ is dominant and so $\Gamma(\ml,\ml^c)\geq \Gamma$. 
				
				Noting that $\Gamma$ is a segment of $\ml$, the construction of $\mathcal{R}^{SE}(\sigma_{\geq \Gamma})$ makes it clear that for any segment $\Delta>\Gamma$ of $\ml$, the pair $(\ml_{\ngeq \Delta},\ml^c_{\ngeq \Delta})$ cannot be dominant. Thus we see that $\Gamma(\ml,\ml^c)=\Gamma$ which proves the claim.
				
				\item 
				\sloppypar In the proof of b) we have shown that $\Gamma(\ml_{I,J},\ml_{I^c,J^c})=\Gamma$ if $(I,J)=\mathcal{R}_\Gamma(\sigma)$ for $\Gamma=(i,\sigma(i))$ or $\Gamma=(n+1,n+1)$. So all elements of 
				$ \mathcal{R}_\Gamma(\sigma)\colon \Gamma=(i,\sigma(i))\text{ for some }i\text{ or }\Gamma=(n+1,n+1)\} $
				correspond to pairs of ladders  $(\ml_1,\ml_2)$ with different $\Gamma(\ml_1,\ml_2)$ and are therefore distinct.
			\end{enumerate}
		\end{proof}

		\begin{proof}[Proof of Theorem \ref{thm:n+1}]
\sloppypar			Let $\sigma\in S_n^{321}$. Then a) and b) of the previous lemma show $\{\mathcal{R}_\Gamma(\sigma)\colon \Gamma\in\Z^2\}\subseteq\psi^{-1}(\sigma)$ and c) shows that $|\{\mathcal{R}_\Gamma(\sigma)\colon \Gamma\in\Z^2\}|\geq n+1$. Since this holds for all $\sigma\in S_n^{321}$ and $|X_n|=(n+1)|S_n^{321}|$ we must have $|\psi^{-1}(\sigma)|=n+1$. In fact this also shows $\psi^{-1}(\sigma)=\{\mathcal{R}_\Gamma(\sigma)\colon \Gamma=(i,\sigma(i))\text{ for some }i\text{ or }\Gamma=(n+1,n+1)\}$.
		\end{proof}
		\begin{ex}\label{ex:full}
			We illustrate this parametrisation of the set $\psi^{-1}(\sigma)$ with our running example.
			
			Let $\sigma=241579368\in S_9^{321}$. We have already computed $\mathcal{R}^{NW}(\sigma)$ resp. $\mathcal{R}^{SE}(\sigma)$ in Example \ref{ex:rsknw} resp. Example \ref{ex:rskse}. Below we also give the shadow line constructions computing $\mathcal{R}_\Gamma(\sigma)$ for the other $9$ positions of $\Gamma$ that produce mutually distinct results. By the previous discussion, this computes $\psi^{-1}(\sigma)$, i.e. it computes all regular pairs of ladders $(\ml_1,\ml_2)$ such that $\soc(\ml_1,\ml_2)=\m_\sigma$. As before, we indicate the cut-off $\Gamma$ by an orange line and we write $\mathcal{R}_\Gamma(\sigma)=(I,J)$ below each picture. Recall from Definition \ref{def:mixrsk} that $I$ resp. $J$ are the row resp. column indices of elbows northwest of $\Gamma$ and half-infinite shadow lines southeast of $\Gamma$.
			\newpage
			
		\noindent\makebox[\textwidth]{
		\begin{minipage}{0.35\textwidth}
				\begin{tikzpicture}[x=1.4em,y=1.4em, thick,color = black]
			\draw[step=1,black,thin] (0,0) grid (9,9);
			\draw [black] (1.5,8.5) circle (4pt);
			\draw [black] (3.5,7.5) circle (4pt);
    			\draw [black] (4.5,5.5) circle (4pt);
    			\draw [black] (6.5,4.5) circle (4pt);
    			\draw [black] (8.5,3.5) circle (4pt);
    			\draw [black] (0.5,6.5) circle (4pt);
    			\draw [black] (2.5,2.5) circle (4pt);
    			\draw [black] (5.5,1.5) circle (4pt);
    			\draw [black] (7.5,0.5) circle (4pt);
    			
    			\draw [orange] (7,-0.5) -- (7,1) -- (9.5,1);
    			
    			\draw [blue] (7.5,-0.5) -- (7.5,0.5) -- (9.5,0.5);
    			
    			\draw [blue] (-0.5,1.5) -- (5.5,1.5) -- (5.5,3.5) -- (8.5,3.5) -- (8.5,9.5);
    			\draw [blue] (-0.5,2.5) -- (2.5,2.5) -- (2.5,4.5) -- (6.5,4.5) -- (6.5,9.5);
    			\draw [blue] (-0.5,5.5) -- (4.5,5.5) -- (4.5,9.5);
    			\draw [blue] (-0.5,6.5) -- (0.5,6.5) -- (0.5,7.5) -- (3.5,7.5) -- (3.5,9.5);
    			\draw [blue] (-0.5,8.5) -- (1.5,8.5) -- (1.5,9.5);
		\end{tikzpicture}
    				\centering
    				\captionof*{figure}{$I=\{2,5,6,9\}$\\$J=\{1,3,6,8\}$}
		\end{minipage}
		\begin{minipage}{0.35\textwidth}
				\begin{tikzpicture}[x=1.4em,y=1.4em, thick,color = black]
			\draw[step=1,black,thin] (0,0) grid (9,9);
			\draw [black] (1.5,8.5) circle (4pt);
			\draw [black] (3.5,7.5) circle (4pt);
    			\draw [black] (4.5,5.5) circle (4pt);
    			\draw [black] (6.5,4.5) circle (4pt);
    			\draw [black] (8.5,3.5) circle (4pt);
    			\draw [black] (0.5,6.5) circle (4pt);
    			\draw [black] (2.5,2.5) circle (4pt);
    			\draw [black] (5.5,1.5) circle (4pt);
    			\draw [black] (7.5,0.5) circle (4pt);
    			
    			\draw [orange] (5,-0.5) -- (5,2) -- (9.5,2);
    			
    			\draw [blue] (7.5,-0.5) -- (7.5,0.5) -- (9.5,0.5);
    			\draw [blue] (5.5,-0.5) -- (5.5,1.5) -- (9.5,1.5);
    			
    			\draw [blue] (-0.5,2.5) -- (2.5,2.5) -- (2.5,3.5) -- (8.5,3.5) -- (8.5,9.5);
    			\draw [blue] (-0.5,4.5) -- (6.5,4.5) -- (6.5,9.5);
    			\draw [blue] (-0.5,5.5) -- (4.5,5.5) -- (4.5,9.5);
    			\draw [blue] (-0.5,6.5) -- (0.5,6.5) -- (0.5,7.5) -- (3.5,7.5) -- (3.5,9.5);
    			\draw [blue] (-0.5,8.5) -- (1.5,8.5) -- (1.5,9.5);
		\end{tikzpicture}
    				\centering
    				\captionof*{figure}{$I=\{2,6,8,9\}$\\$J=\{1,3,6,8\}$}
		\end{minipage}
		\begin{minipage}{0.35\textwidth}
				\begin{tikzpicture}[x=1.4em,y=1.4em, thick,color = black]
			\draw[step=1,black,thin] (0,0) grid (9,9);
			\draw [black] (1.5,8.5) circle (4pt);
			\draw [black] (3.5,7.5) circle (4pt);
    			\draw [black] (4.5,5.5) circle (4pt);
    			\draw [black] (6.5,4.5) circle (4pt);
    			\draw [black] (8.5,3.5) circle (4pt);
    			\draw [black] (0.5,6.5) circle (4pt);
    			\draw [black] (2.5,2.5) circle (4pt);
    			\draw [black] (5.5,1.5) circle (4pt);
    			\draw [black] (7.5,0.5) circle (4pt);
    			
    			\draw [orange] (2,-0.5) -- (2,3) -- (9.5,3);
    			
    			\draw [blue] (7.5,-0.5) -- (7.5,0.5) -- (9.5,0.5);
    			\draw [blue] (5.5,-0.5) -- (5.5,1.5) -- (9.5,1.5);
    			\draw [blue] (2.5,-0.5) -- (2.5,2.5) -- (9.5,2.5);
    			
    			\draw [blue] (-0.5,3.5) -- (8.5,3.5) -- (8.5,9.5);
    			\draw [blue] (-0.5,4.5) -- (6.5,4.5) -- (6.5,9.5);
    			\draw [blue] (-0.5,5.5) -- (4.5,5.5) -- (4.5,9.5);
    			\draw [blue] (-0.5,6.5) -- (0.5,6.5) -- (0.5,7.5) -- (3.5,7.5) -- (3.5,9.5);
    			\draw [blue] (-0.5,8.5) -- (1.5,8.5) -- (1.5,9.5);
		\end{tikzpicture}
    				\centering
    				\captionof*{figure}{$I=\{2,7,8,9\}$\\$J=\{1,3,6,8\}$}
		\end{minipage}
		}
		
		\noindent\makebox[\textwidth]{
		\begin{minipage}{0.35\textwidth}
				\begin{tikzpicture}[x=1.4em,y=1.4em, thick,color = black]
			\draw[step=1,black,thin] (0,0) grid (9,9);
			\draw [black] (1.5,8.5) circle (4pt);
			\draw [black] (3.5,7.5) circle (4pt);
    			\draw [black] (4.5,5.5) circle (4pt);
    			\draw [black] (6.5,4.5) circle (4pt);
    			\draw [black] (8.5,3.5) circle (4pt);
    			\draw [black] (0.5,6.5) circle (4pt);
    			\draw [black] (2.5,2.5) circle (4pt);
    			\draw [black] (5.5,1.5) circle (4pt);
    			\draw [black] (7.5,0.5) circle (4pt);
    			
    			\draw [orange] (8,-0.5) -- (8,4) -- (9.5,4);
    			
    			\draw [blue] (8.5,-0.5) -- (8.5,3.5) -- (9.5,3.5);
    			
    			\draw [blue] (-0.5,0.5) -- (7.5,0.5) -- (7.5,9.5);
    			\draw [blue] (-0.5,1.5) -- (5.5,1.5) -- (5.5,4.5) -- (6.5,4.5) -- (6.5,9.5);	
    			\draw [blue] (-0.5,2.5) -- (2.5,2.5) -- (2.5,5.5) -- (4.5,5.5) -- (4.5,9.5);
    			\draw [blue] (-0.5,6.5) -- (0.5,6.5) -- (0.5,7.5) -- (3.5,7.5) -- (3.5,9.5);
    			\draw [blue] (-0.5,8.5) -- (1.5,8.5) -- (1.5,9.5);
		\end{tikzpicture}
    				\centering
    				\captionof*{figure}{$I=\{2,4,5,6\}$\\$J=\{1,3,6,9\}$}
		\end{minipage}

		\begin{minipage}{0.35\textwidth}
				\begin{tikzpicture}[x=1.4em,y=1.4em, thick,color = black]
			\draw[step=1,black,thin] (0,0) grid (9,9);
			\draw [black] (1.5,8.5) circle (4pt);
			\draw [black] (3.5,7.5) circle (4pt);
    			\draw [black] (4.5,5.5) circle (4pt);
    			\draw [black] (6.5,4.5) circle (4pt);
    			\draw [black] (8.5,3.5) circle (4pt);
    			\draw [black] (0.5,6.5) circle (4pt);
    			\draw [black] (2.5,2.5) circle (4pt);
    			\draw [black] (5.5,1.5) circle (4pt);
    			\draw [black] (7.5,0.5) circle (4pt);
    			
    			\draw [orange] (6,-0.5) -- (6,5) -- (9.5,5);
    			
    			\draw [blue] (6.5,-0.5) -- (6.5,4.5) -- (9.5,4.5);
    			\draw [blue] (7.5,-0.5) -- (7.5,0.5) -- (8.5,0.5) -- (8.5,3.5) -- (9.5,3.5);

    			\draw [blue] (-0.5,1.5) -- (5.5,1.5) -- (5.5,9.5);	
    			\draw [blue] (-0.5,2.5) -- (2.5,2.5) -- (2.5,5.5) -- (4.5,5.5) -- (4.5,9.5);
    			\draw [blue] (-0.5,6.5) -- (0.5,6.5) -- (0.5,7.5) -- (3.5,7.5) -- (3.5,9.5);
    			\draw [blue] (-0.5,8.5) -- (1.5,8.5) -- (1.5,9.5);
		\end{tikzpicture}
    				\centering
    				\captionof*{figure}{$I=\{2,4,5,6\}$\\$J=\{1,3,7,8\}$}
		\end{minipage}
		\begin{minipage}{0.35\textwidth}
				\begin{tikzpicture}[x=1.4em,y=1.4em, thick,color = black]
			\draw[step=1,black,thin] (0,0) grid (9,9);
			\draw [black] (1.5,8.5) circle (4pt);
			\draw [black] (3.5,7.5) circle (4pt);
    			\draw [black] (4.5,5.5) circle (4pt);
    			\draw [black] (6.5,4.5) circle (4pt);
    			\draw [black] (8.5,3.5) circle (4pt);
    			\draw [black] (0.5,6.5) circle (4pt);
    			\draw [black] (2.5,2.5) circle (4pt);
    			\draw [black] (5.5,1.5) circle (4pt);
    			\draw [black] (7.5,0.5) circle (4pt);
    			
    			\draw [orange] (4,-0.5) -- (4,6) -- (9.5,6);
    			
    			\draw [blue] (5.5,-0.5) -- (5.5,1.5) -- (6.5,1.5) -- (6.5,4.5) -- (9.5,4.5);
    			\draw [blue] (7.5,-0.5) -- (7.5,0.5) -- (8.5,0.5) -- (8.5,3.5) -- (9.5,3.5);
    			\draw [blue] (4.5,-0.5) -- (4.5,5.5) -- (9.5,5.5);
	
    			\draw [blue] (-0.5,2.5) -- (2.5,2.5) -- (2.5,7.5) -- (3.5,7.5) -- (3.5,9.5);
    			\draw [blue] (-0.5,6.5) -- (0.5,6.5) -- (0.5,8.5) -- (1.5,8.5) -- (1.5,9.5);
		\end{tikzpicture}
    				\centering
    				\captionof*{figure}{$I=\{1,2,4,5,6\}$\\$J=\{1,3,5,6,8\}$}
		\end{minipage}
		}
		
		\noindent\makebox[\textwidth]{
		\begin{minipage}{0.35\textwidth}
				\begin{tikzpicture}[x=1.4em,y=1.4em, thick,color = black]
			\draw[step=1,black,thin] (0,0) grid (9,9);
			\draw [black] (1.5,8.5) circle (4pt);
			\draw [black] (3.5,7.5) circle (4pt);
    			\draw [black] (4.5,5.5) circle (4pt);
    			\draw [black] (6.5,4.5) circle (4pt);
    			\draw [black] (8.5,3.5) circle (4pt);
    			\draw [black] (0.5,6.5) circle (4pt);
    			\draw [black] (2.5,2.5) circle (4pt);
    			\draw [black] (5.5,1.5) circle (4pt);
    			\draw [black] (7.5,0.5) circle (4pt);
    			
    			\draw [orange] (0,-0.5) -- (0,7) -- (9.5,7);
    			
    			\draw [blue] (0.5,-0.5) -- (0.5,6.5) -- (9.5,6.5);
    			\draw [blue] (2.5,-0.5) -- (2.5,2.5) -- (4.5,2.5) -- (4.5,5.5) -- (9.5,5.5);
    			\draw [blue] (5.5,-0.5) -- (5.5,1.5) -- (6.5,1.5) -- (6.5,4.5) -- (9.5,4.5);
    			\draw [blue] (7.5,-0.5) -- (7.5,0.5) -- (8.5,0.5) -- (8.5,3.5) -- (9.5,3.5);
	
    			\draw [blue] (-0.5,7.5) -- (3.5,7.5) -- (3.5,9.5);
    			\draw [blue] (-0.5,8.5) -- (1.5,8.5) -- (1.5,9.5);
		\end{tikzpicture}
    				\centering
    				\captionof*{figure}{$I=\{3,4,5,6\}$\\$J=\{1,3,6,8\}$}
		\end{minipage}
		\begin{minipage}{0.35\textwidth}
				\begin{tikzpicture}[x=1.4em,y=1.4em, thick,color = black]
			\draw[step=1,black,thin] (0,0) grid (9,9);
			\draw [black] (1.5,8.5) circle (4pt);
			\draw [black] (3.5,7.5) circle (4pt);
    			\draw [black] (4.5,5.5) circle (4pt);
    			\draw [black] (6.5,4.5) circle (4pt);
    			\draw [black] (8.5,3.5) circle (4pt);
    			\draw [black] (0.5,6.5) circle (4pt);
    			\draw [black] (2.5,2.5) circle (4pt);
    			\draw [black] (5.5,1.5) circle (4pt);
    			\draw [black] (7.5,0.5) circle (4pt);
    			
    			\draw [orange] (3,-0.5) -- (3,8) -- (9.5,8);
    			
    			\draw [blue] (3.5,-0.5) -- (3.5,7.5) -- (9.5,7.5);
    			\draw [blue] (4.5,-0.5) -- (4.5,5.5) -- (9.5,5.5);
    			\draw [blue] (5.5,-0.5) -- (5.5,1.5) -- (6.5,1.5) -- (6.5,4.5) -- (9.5,4.5);
    			\draw [blue] (7.5,-0.5) -- (7.5,0.5) -- (8.5,0.5) -- (8.5,3.5) -- (9.5,3.5);
	
			\draw [blue] (-0.5,2.5) -- (2.5,2.5) -- (2.5,9.5);
    			\draw [blue] (-0.5,6.5) -- (0.5,6.5) -- (0.5,8.5) -- (1.5,8.5) -- (1.5,9.5);
		\end{tikzpicture}
    				\centering
    				\captionof*{figure}{$I=\{1,2,4,5,6\}$\\$J=\{1,4,5,6,8\}$}
		\end{minipage}
		\begin{minipage}{0.35\textwidth}
				\begin{tikzpicture}[x=1.4em,y=1.4em, thick,color = black]
			\draw[step=1,black,thin] (0,0) grid (9,9);
			\draw [black] (1.5,8.5) circle (4pt);
			\draw [black] (3.5,7.5) circle (4pt);
    			\draw [black] (4.5,5.5) circle (4pt);
    			\draw [black] (6.5,4.5) circle (4pt);
    			\draw [black] (8.5,3.5) circle (4pt);
    			\draw [black] (0.5,6.5) circle (4pt);
    			\draw [black] (2.5,2.5) circle (4pt);
    			\draw [black] (5.5,1.5) circle (4pt);
    			\draw [black] (7.5,0.5) circle (4pt);
    			
    			\draw [orange] (1,-0.5) -- (1,9) -- (9.5,9);
    			
    			\draw [blue] (1.5,-0.5) -- (1.5,8.5) -- (9.5,8.5);
    			\draw [blue] (2.5,-0.5) -- (2.5,2.5) -- (3.5,2.5) -- (3.5,7.5) -- (9.5,7.5);;
    			\draw [blue] (4.5,-0.5) -- (4.5,5.5) -- (9.5,5.5);
    			\draw [blue] (5.5,-0.5) -- (5.5,1.5) -- (6.5,1.5) -- (6.5,4.5) -- (9.5,4.5);
    			\draw [blue] (7.5,-0.5) -- (7.5,0.5) -- (8.5,0.5) -- (8.5,3.5) -- (9.5,3.5);
	
    			\draw [blue] (-0.5,6.5) -- (0.5,6.5) -- (0.5,9.5);
		\end{tikzpicture}
    				\centering
    				\captionof*{figure}{$I=\{1,2,4,5,6\}$\\$J=\{2,3,5,6,8\}$}
		\end{minipage}
		}
		\end{ex}
		\begin{rmk}
			The results of this section hold somewhat more generally. Instead of considering $\mathcal{R}_\Gamma(\sigma)$ only for $321$-avoiding permutations $\sigma$, one can define equally well $\mathcal{R}_\Gamma(\m)$ for any multisegment $\m=\sum_{i\in I}\Delta_i$ satisfying $\b(\Delta_i)\leq \e(\Delta_j)$ for all $i,j\in I$ and that is '$321$-avoiding' in the sense that there are no $i,j,k\in I$ such that $\Delta_i\subseteq\Delta_j\subseteq\Delta_k$. 
			
			All the results of this section then hold mutatis mutandis in this setting. In particular, this shows how $\Psi$, as an inverse to our mixed RSK, extends the description of $\soc(Z(\ml_1)\times Z(\ml_2))$ for saturated $(\ml_1,\ml_2)$ (i.e. $\b(\Delta)\leq \e(\Delta^\prime)$ for all $\Delta,\Delta^\prime$ in $\ml_1,\ml_2$) of \cite{GL21} in terms of RSK combinatorics.
			
			It would be interesting to get rid of the pattern avoidance condition in this description and so get a simple combinatorial method to compute $\soc(Z(\m)\times Z(\ml))$ where $\m$ is not necessarily a ladder.
		\end{rmk}

\newpage
\sloppy
\printbibliography
\end{document}